\documentclass[12pt,a4paper]{article}
\usepackage[utf8]{inputenc}
\usepackage{amsmath, amssymb, amsthm, verbatim, hyperref}
\usepackage[dvipsnames]{xcolor}
\usepackage{ragged2e}
\usepackage{marginnote}
\usepackage{enumerate}
\usepackage{makecell}
\usepackage{longtable}
\usepackage{epsfig}
\usepackage{tikz}
\usepackage{ytableau}
\usepackage{hyperref}
\usepackage{caption}
\usepackage{subcaption}
\hypersetup{hidelinks}
\usepackage[left=2.50cm, right=2.50cm, top=2.00cm, bottom=2.00cm]{geometry}
\usepackage{graphicx,epstopdf}
\epstopdfsetup{
    suffix=,
}
\usetikzlibrary {arrows.meta}

\newcommand{\BSfinite}{{\mathcal{BS}_{\mathrm{fin}}}}
\newcommand{\BSlimit}{{\mathcal{BS}_{\mathrm{\infty}}}}

\theoremstyle{plain}

\newenvironment{repthm}[1]
  {\innercustomthm}
  {\endinnercustomthm}

\newtheorem{thm}{Theorem}[section]
\newtheorem{prop}[thm]{Proposition}
\newtheorem{cor}[thm]{Corollary}
\newtheorem{lemma}[thm]{Lemma}
\newtheorem{conjecture}[thm]{Conjecture}

\theoremstyle{definition}
\newtheorem{definition}[thm]{Definition}
\theoremstyle{remark}

\newtheorem{remark}[thm]{Remark}

\newtheorem{example}[thm]{Example}

\title{Bulgarian Solitaire: A new representation for depth generating functions}
\author{A.J. Harris, Son Nguyen} 
\date{\today}

\allowdisplaybreaks


\begin{document}
\ytableausetup{centertableaux}

\maketitle

\begin{abstract}
Bulgarian Solitaire is an interesting self-map on the set of integer partitions of a fixed number $n$. As a finite dynamical system, its long-term behavior is well-understood, having
recurrent orbits parametrized by necklaces of beads with two colors black $B$ and white $W$. However, the behavior of the transient elements within each orbit is much less understood. 

Recent work of Pham  considered the orbits
corresponding to a family of necklaces $P^\ell$ that are concatenations of $\ell$ copies of a fixed primitive necklace $P$. She proved striking limiting behavior as $\ell$ goes to infinity: the level statistic for the orbit, counting how many steps it takes a partition to reach the recurrent cycle, has a 
limiting distribution, whose generating function
$H_p(x)$
is rational. Pham also conjectured that $H_P(x), H_{P^*}(x)$ share the same denominator whenever $P^*$ is obtained from $P$ by
reading it backwards and swapping $B$ for $W$.

    Here we introduce a new representation of Bulgarian Solitaire that is convenient for the study of these generating functions. We then use it to prove two instances of Pham's conjecture, showing that
    $$H_{BWBWB \cdots WB}(x)=H_{WBWBW \cdots BW}(x)$$ and that $H_{BWWW\cdots W}(x),H_{WBBB\cdots B}(x)$ share the same denominator.
\end{abstract}

\tableofcontents


\section{Introduction}
\label{sec:intro}

    The game of Bulgarian Solitaire (BS) was introduced sometime in the late 20th century, and popularized by Martin Gardner in 1983. The game itself is very simple. A player starts with $n$ cards divided into a number of piles in weakly descending order. Now  keep repeating the \textit{Bulgarian Solitaire move} $\beta$ defined as follows: take one card from each pile, form a new pile and put the piles in weakly descending order. The game ends when a configuration of cards is repeated.

    The game can naturally be represented by partitions and Young diagrams, where in each move, we remove the first column and reinsert it as a new row as shown in Figure \ref{fig:bs_move}.

    \begin{figure}[h!]
        \centering
        \includegraphics[width = 0.5\textwidth]{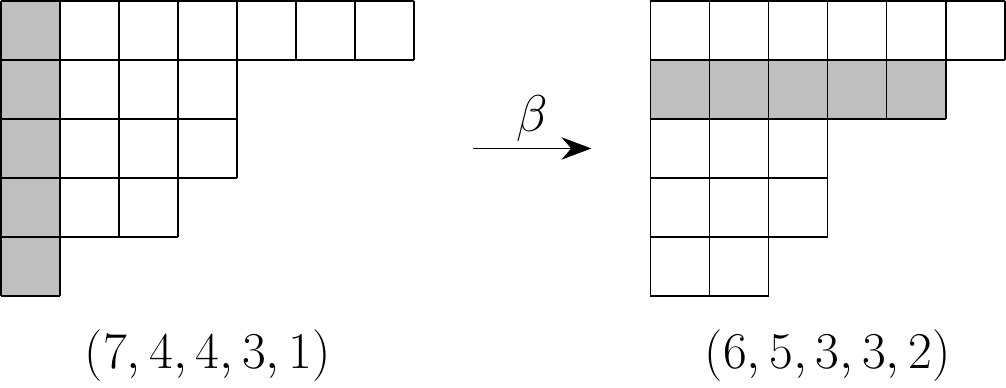}
        \caption{Bulgarian Solitaire move}
        \label{fig:bs_move}
    \end{figure}

    The BS move $\beta$ forms a dynamical system on the set $X$ of all partitions, and repeated application of $\beta$ leads to a \textit{recurrent cycle} $\mathcal{C}$ consisting of partitions $\lambda$ such that $\lambda = \beta^m(\lambda)$ for some $m$. Figure \ref{fig:bs_graph_8} shows an example of the Bulgarian Solitaire moves on partitions of $8$ in which the directed edges connect $\lambda$ to $\beta(\lambda)$. As can be seen in the example, we have two recurrent cycles $\{(3,2,2,1),(4,2,1,1),(4,3,1),(3,3,2)\}$ and $\{(3,3,1,1),(4,2,2)\}$.

    Brandt proved in \cite{brandt1982cycle} that there is a bijection between the set of recurrent cycles $\mathcal{C}$ and the set of objects called (black-white) necklaces. A {\it necklace} $N$ is an equivalence class of sequences of letters $\{B,W\}$ under cyclic rotation. Suppose $\binom{m}{2} \leq n < \binom{m+1}{2}$, then the bijection from the set of necklaces of length $m$ with $n-\binom{m}{2}$ $B$'s to the set of elements in the recurrent cycles for the BS system with $n$ cards is defined by
    \[ (b_1,b_2,\ldots,b_m) \rightarrow (m-1,m-2,\ldots,0) + (s_1,s_2,\ldots,s_m) \]
    where
    \begin{align*}
        s_i = \begin{cases}
            1 ~~ &\text{if } b_i = B \\
            0 ~~ &\text{if } b_i = W
        \end{cases}.
    \end{align*}

    \begin{figure}[h!]
        \centering
        \includegraphics[width = 0.9\textwidth]{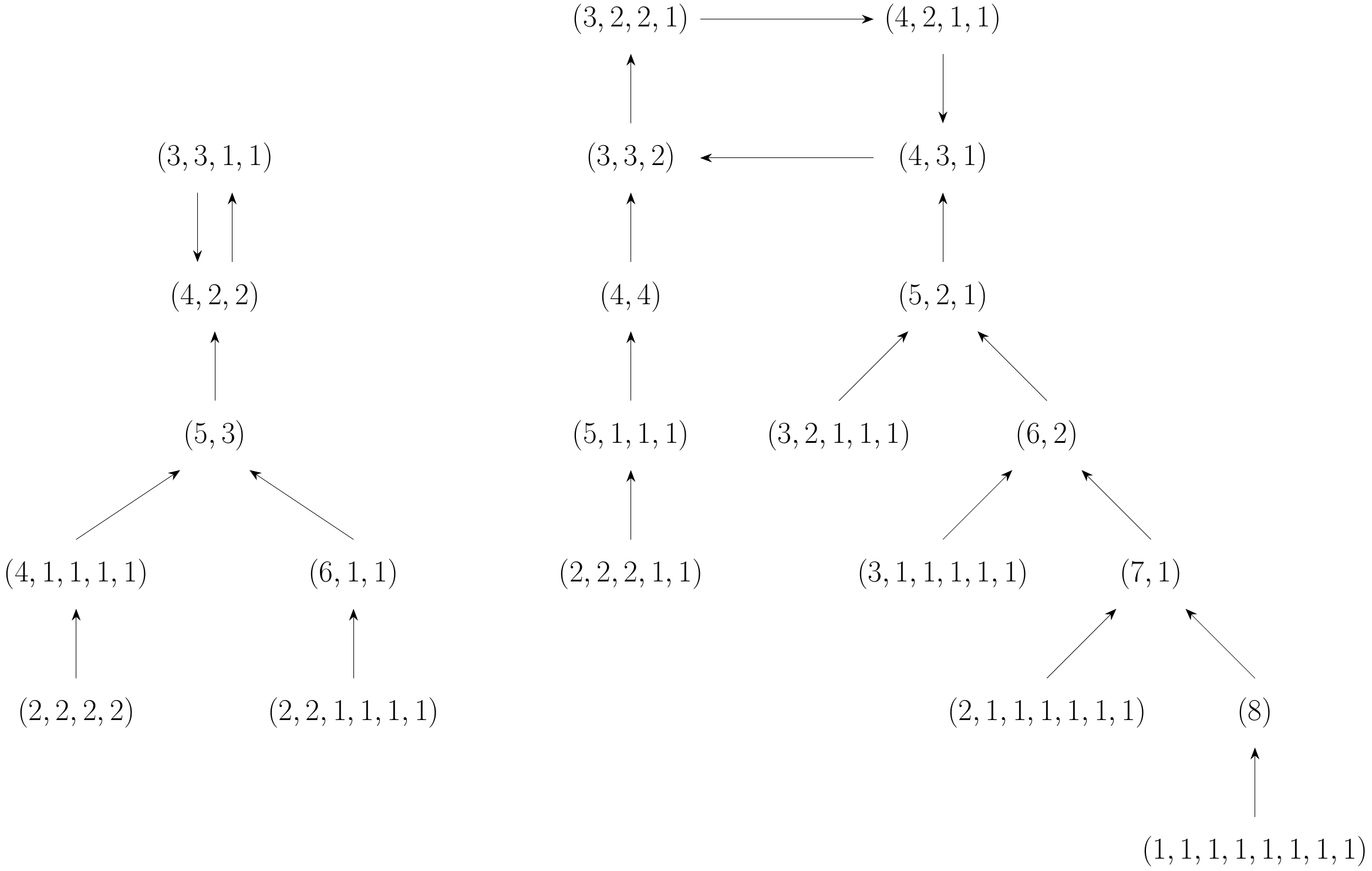}
        \caption{Example for $n = 8$}
        \label{fig:bs_graph_8}
    \end{figure}

    Figure \ref{fig:neck_cycle} gives a visualization of the bijection from necklaces of length $4$ with $2$ $B$'s to the recurrent elements of the system for $n = 8$.

    \begin{figure}[h!]
        \centering
        \includegraphics[width = 0.6\textwidth]{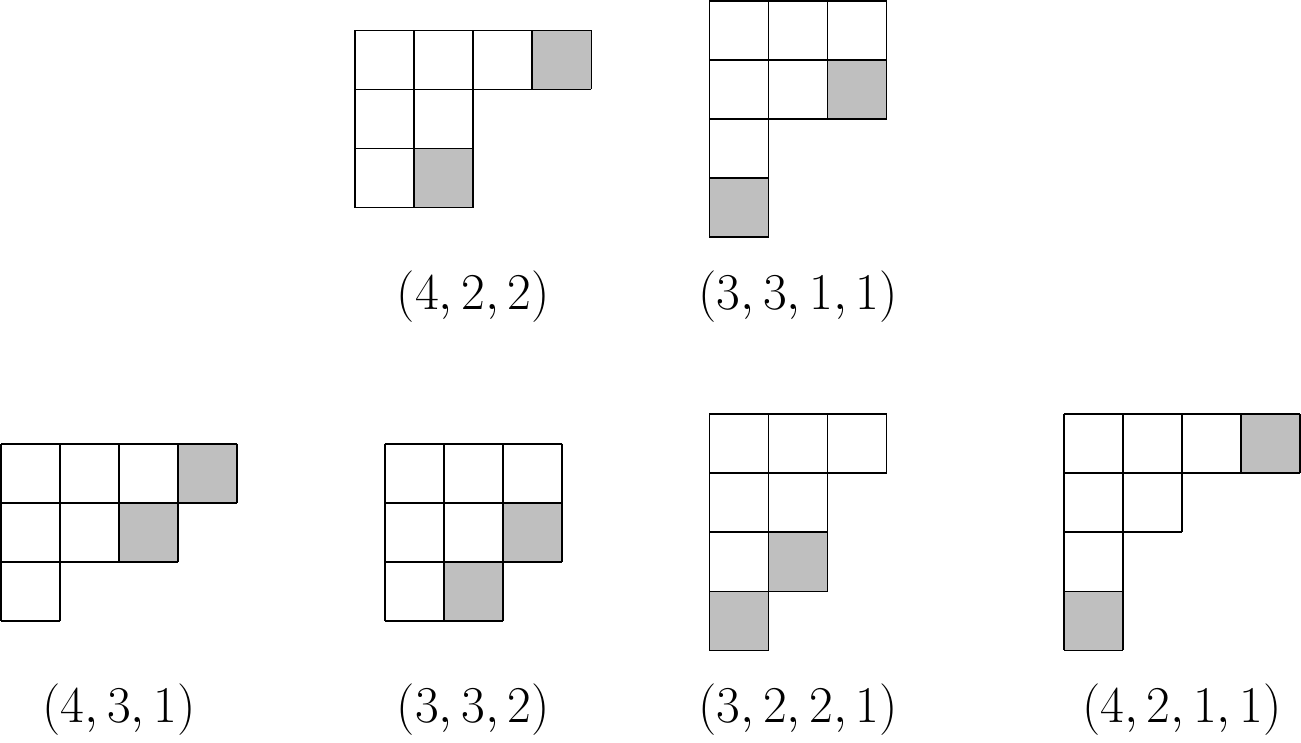}
        \caption{Necklaces and recurrent elements}
        \label{fig:neck_cycle}
    \end{figure}

    We call $P$ a \textit{primitive necklace} if it cannot be written as a concatenation $P = N^k = NN\ldots N$ with $k \geq 2$. For example, $BBWW$ is a primitive necklace while $BWBW = (BW)^2$ is not. For each necklace $N$, let $\mathcal{C}_N$ be the recurrent elements corresponding to necklaces in the equivalence class of $N$, and let the orbit $\mathcal{O}_N$ be the set of elements $\lambda$ such that $\beta^k(\lambda)\in\mathcal{C}_N$ for some $k \in \mathbb{Z}_{>0}$. For each element $\lambda$ in $\mathcal{O}_N$, let
    \[ \text{level}(\lambda) = \min\{k:\beta^k(\lambda)\in\mathcal{C}_N\} \]
    and define the level size generating function of $N$ to be
    \[ \mathcal{D}_N(x) = \sum_{\lambda\in\mathcal{O}_N} x^{\text{level}(\lambda)}. \]
    Our main results concern the limit of the generating function of $P^k$ as $k\rightarrow\infty$ for primitive necklaces $P$, that is
    \[ H_P(x) = \lim_{\ell\rightarrow\infty} \mathcal{D}_{P^\ell}(x). \]
    Eriksson and Jonsson proved in \cite[Section 4]{eriksson2017level} that such a limit exists when $P=W$, and
    \[ H_W(x) = \dfrac{(1-x)^2}{1-3x+x^2}. \]
    Pham then proved in \cite[Theorem 1.1, 1.2]{pham2022limiting} that for all primitive necklaces $P$ with $|P|\geq 2$, such a limit $H_P(x)$ exists. Furthermore, for $|P| \geq 3$, $H_P(x)$ is a rational function having denominator polynomial of degree at most $|P|$ and numerator degree at most $2|P|$.
    For example, she showed that
    \[ H_{BW}(x) = \dfrac{(1-x)^2(3x+2)}{x^3-3x^2-x + 1} \]
    and
    \[ H_{BWB}(x) = H_{WBW}(x) = \dfrac{(1-x)(x^3-3x^2-4x-3)}{2x^3+x^2-1}. \]
    Further computations led Pham to an interesting conjecture. For a primitive necklace $P$, the \textit{dual necklace} $P^*$ is constructed as follows: first reverse the order of the letters in $P$, then replace all letters $B$ by $W$ and vice versa. She made the following conjecture.

    \begin{conjecture}\label{con:P-and-P-dual}
        For all primitive necklaces $P$, the rational functions $H_P(x)$ and $H_{P^*}(x)$ 
        can be written with the same denominator of degree $|P|=|P^*|$.
    \end{conjecture}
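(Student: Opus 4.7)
The natural starting point is to look for an involution on partitions that relates the orbits $\mathcal{O}_{P^\ell}$ and $\mathcal{O}_{(P^*)^\ell}$. The construction $P\mapsto P^*$ reverses the sequence and swaps $B\leftrightarrow W$, which under Brandt's bijection corresponds to complementing the indicator string $(s_1,\ldots,s_m)$ after reversing it. One checks that, after accounting for the staircase base $(m-1,m-2,\ldots,0)$, this is precisely Young-diagram transposition applied to the recurrent partitions, so the working hypothesis is that $\lambda\mapsto\lambda^t$ carries $\mathcal{C}_{P^\ell}$ to $\mathcal{C}_{(P^*)^\ell}$. The first step of the plan is to verify this on the recurrent cycles and then analyze how transposition interacts with $\beta$: since $\beta$ removes the first column, $\lambda\mapsto\lambda^t$ conjugates $\beta$ not to itself but to a ``row-based'' move $\tilde\beta(\lambda)=(\beta(\lambda^t))^t$. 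This asymmetry is exactly what prevents $H_P=H_{P^*}$ in general and keeps the conjecture substantive.

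To prove equality of denominators (rather than of the full functions), I would exploit the fact that Pham's rationality result, with denominator degree at most $|P|$, forces the transient dynamics to be governed by a finite-dimensional linear recurrence. Concretely, the plan is to use the new representation promised in the abstract to encode each element of $\mathcal{O}_{P^\ell}$ (for $\ell\gg 0$) by its deviation from the recurrent path, and show that these deviations live in a universal finite set $\Sigma_P$ on which $\beta$ induces a transition operator $M_P$ whose characteristic polynomial gives the denominator of $H_P(x)$. The goal is then to produce an explicit bijection $\Sigma_P\to\Sigma_{P^*}$ built from transposition and to show that the associated matrices $M_P$ and $M_{P^*}$ are similar, or at least share the same characteristic polynomial, so that the denominators coincide.

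The main obstacle is that the isomorphism $\Sigma_P\cong\Sigma_{P^*}$ cannot preserve levels, since $H_P\neq H_{P^*}$ in most examples; it must preserve only the purely algebraic data determining the denominator. Pinning down the correct finite state space and the correct ``projection'' of the $\beta$-action onto it—one natural enough to intertwine with the $P\mapsto P^*$ duality, yet coarse enough to hide the numerator discrepancy—is where the real difficulty lies, and this is presumably why the authors can only establish two instances of the conjecture rather than the full statement. For the two special families singled out in the abstract, extra symmetry should make the problem tractable: when $P=BWBW\cdots WB$, the dual $P^*=WBWB\cdots BW$ is an honest rotation of $P$ after the $B/W$ swap, so one may hope to upgrade the matrix similarity into a bijection of orbits giving the stronger identity $H_P=H_{P^*}$; when $P=BW^{|P|-1}$ and $P^*=WB^{|P|-1}$, the extreme simplicity of the cycle (a single ``hot bead'' circulating) should let one write down $M_P$ and $M_{P^*}$ explicitly and verify, by a direct transpose-type computation, that their characteristic polynomials agree.
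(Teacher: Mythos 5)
The statement you are proving is a conjecture that the paper itself does not prove: the authors only establish the two special cases stated in the abstract (Theorems \ref{thm:BWB_WBW} and \ref{thm:BWW_WBB}), and they do so by a route quite different from yours (a difference representation $\mu_i=\lambda_i-\lambda_{i+1}$ with playability bars, reversed BS moves on a limiting system, quasi-infinite forests, the $k$-fuse polynomials $u_k(x)$, tree isomorphisms for $B(WB)^k$ versus $W(BW)^k$, and explicit linear systems with recurrences $f_k,h_k,p_k$ for $BW^k$ versus $WB^k$). Your proposal does not close this gap: the finite state space $\Sigma_P$, the transition operator $M_P$, the claim that the denominator of $H_P(x)$ is its characteristic polynomial, and the intertwining bijection $\Sigma_P\to\Sigma_{P^*}$ are all left unconstructed, and nothing in Pham's rationality theorem supplies them. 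As written, this is a research plan, not a proof.

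There is also a concrete false step at the start. Transposition of Young diagrams preserves the number of cards $n$, but $P$ and $P^*$ generally have different numbers of $B$'s (e.g.\ $P=BWW$, $P^*=BBW$), so for the same $\ell$ the systems $\mathcal{O}_{P^\ell}$ and $\mathcal{O}_{(P^*)^\ell}$ live on partitions of \emph{different} integers; no size-preserving involution can carry $\mathcal{C}_{P^\ell}$ to $\mathcal{C}_{(P^*)^\ell}$. What transposition actually does on a recurrent element (staircase plus indicator vector) is reverse the word without complementing it, which stays inside the same cyclic equivalence class rather than producing the dual necklace: e.g.\ $(3,1)\leftrightarrow BWW$ transposes to $(2,1,1)\leftrightarrow WWB$, a rotation of $BWW$, not $BBW$. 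Similarly, your remark that for $P=B(WB)^k$ the duality reduces to a rotation cannot yield a finite orbit bijection, since $B(WB)^k$ and $W(BW)^k$ again have different $B$-counts; the paper instead proves $H_{B(WB)^k}=H_{W(BW)^k}$ by showing the \emph{limiting} trees $\mathcal{T}_{C_i}$ and $\mathcal{T}_{C_i'}$ are isomorphic (Lemmas \ref{lem:1.1-trivial-case}--\ref{lem:1.1-general-case} via ``almost isomorphic'' degenerate trees), and proves the $BW^k$/$WB^k$ case by showing the two linear systems force the same denominator polynomial $f_k-1=p_k-1$ of degree $k+1$. If you want to salvage your second idea, the objects playing the role of your $M_P$ in the paper are precisely these systems of equations among the $g_i(x)$ read off from the degenerated forests; making that precise for general $P$, and showing the duality $P\mapsto P^*$ preserves the resulting denominator, is exactly the open content of the conjecture.
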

    
    In this paper, we introduce a new representation of Bulgarian Solitaire and then use it to prove the following special cases of Conjecture \ref{con:P-and-P-dual}.

    \begin{thm}\label{thm:BWB_WBW}
        For $k\geq 1$, one has
        $H_{B(WB)^k}(x) = H_{W(BW)^k}(x)$ .
    \end{thm}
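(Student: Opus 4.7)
Since $B(WB)^k$ is a palindrome, its dual necklace $W(BW)^k$ is obtained simply by swapping $B \leftrightarrow W$, with no reversal needed. This suggests that the natural symmetry relating the two orbits is color-swap. At the level of recurrent elements, setting $m = \ell(2k+1)$, I would consider the ``$90^\circ$-rotation'' map $\phi_m(\lambda) = \bigl(\text{complement of } \lambda \text{ in the } m \times m \text{ box}\bigr)^T$. A direct calculation shows that if $\lambda$ corresponds via the Brandt bijection to an $s$-vector $s = (s_1,\dots,s_m)$, then $\phi_m(\lambda)$ corresponds to the bit-flipped vector $(1-s_1,\dots,1-s_m)$, which is precisely color-swap at the necklace level. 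Since $\beta$ acts on recurrent $s$-vectors as a cyclic shift, it commutes with bit-flipping, so $\phi_m$ restricts to a $\beta$-equivariant bijection $\mathcal{C}_{(B(WB)^k)^\ell} \to \mathcal{C}_{(W(BW)^k)^\ell}$.

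The plan is then to extend $\phi_m$ to a level-preserving bijection on the full orbits using the new representation of Bulgarian Solitaire introduced earlier in the paper. That representation should encode a general partition as combinatorial data attached to the underlying necklace in a way that makes the BS move a simple local operation. The color-swap $B \leftrightarrow W$ on the necklace should then induce a natural involution $\tilde\phi$ on this data extending the bit-flip on recurrent $s$-vectors, and the key step is to verify that this involution commutes with $\beta$ on transient elements. Once commutation is established, we obtain $\mathcal{D}_{(B(WB)^k)^\ell}(x) = \mathcal{D}_{(W(BW)^k)^\ell}(x)$ for each $\ell$ sufficiently large, and the theorem follows by passing to the limit $\ell \to \infty$, whose existence is guaranteed by Pham's result.

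The main obstacle will be the commutation property $\tilde\phi \circ \beta = \beta \circ \tilde\phi$ on transient data. On the recurrent cycle, where $\beta$ is a cyclic shift and color-swap is a bit-flip, commutation is immediate; on transient elements $\beta$ acts in a more complicated way and must be analyzed in the new representation. The palindromicity of $B(WB)^k$ is crucial here: color-swap alone sends $B(WB)^k$ to $W(BW)^k$ only because the necklace reads the same forwards and backwards, and the argument breaks for non-palindromic $P$. For general primitive $P$ one would expect a more intricate reverse-and-swap symmetry, which is the content of the broader Conjecture~\ref{con:P-and-P-dual}.
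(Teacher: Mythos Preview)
Your proposal has a genuine and fatal gap: the claimed equality $\mathcal{D}_{(B(WB)^k)^\ell}(x) = \mathcal{D}_{(W(BW)^k)^\ell}(x)$ at the finite level is simply false, so no level-preserving bijection between the finite orbits can exist. Already for $k=1,\ \ell=1$ one has $|\mathcal{O}_{BWB}|=7$ while $|\mathcal{O}_{WBW}|=5$; for $k=2,\ \ell=1$ the Appendix records $|\mathcal{O}_{BWBWB}|=34$ versus $|\mathcal{O}_{WBWBW}|=32$, and more generally $|\mathcal{O}_{(BWBWB)^\ell}|=34\cdot 17^{\ell-1}\neq 32\cdot 17^{\ell-1}=|\mathcal{O}_{(WBWBW)^\ell}|$ for every $\ell$. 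Evaluating $\mathcal{D}$ at $x=1$ gives the orbit size, so the two finite generating functions never coincide. The equality $H_{B(WB)^k}=H_{W(BW)^k}$ is a genuinely \emph{limiting} phenomenon and cannot be obtained by matching the finite systems and then letting $\ell\to\infty$. A related obstruction is that your box map $\phi_m$ is only defined on partitions fitting inside the $m\times m$ square, whereas transient elements of the orbit (e.g.\ $(1^n)$) need not fit there; so there is no candidate extension $\tilde\phi$ to begin with. Your computation on recurrent elements is fine, but it cannot be promoted to the transient part.

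The paper's argument avoids this entirely by working directly in the limiting system $\mathcal{O}_{P^\infty}$ via the $\mu$-representation. It writes out the recurrent elements $C_1,\ldots,C_{2k+1}$ for $B(WB)^k$ and $C'_1,\ldots,C'_{2k+1}$ for $W(BW)^k$ and observes that, in the $\mu$-coordinates, $C_i$ and $C'_i$ agree except at two adjacent ``significant'' positions where one reads $1,2$ and the other $2,1$. Using the $k$-fuse machinery, the paper collapses each tree $\mathcal{T}_{C_i}$ to a finite ``degenerate'' tree whose leaves are labelled by pairs $(u_r,g_j)$, and then checks by a short case analysis (Lemmas~\ref{lem:1.1-trivial-case}--\ref{lem:1.1-general-case}) that the degenerate trees for $C_i$ and $C'_i$ are isomorphic with matching labels. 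Lemma~\ref{lem:almost-iso} then upgrades this ``almost isomorphism'' to an actual isomorphism $\mathcal{T}_{C_i}\cong\mathcal{T}_{C'_i}$, whence $g_i=g'_i$ for all $i$ and $H_{B(WB)^k}=H_{W(BW)^k}$. There is no global $\beta$-equivariant involution in sight; the isomorphism of trees is established structurally, one root at a time, and only makes sense in the limit.
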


    \begin{thm}\label{thm:BWW_WBB}
        For $k\geq 1$, the functions $H_{BW^k}(x)$ and $H_{WB^k}(x)$ can both be written over the same denominator which is a polynomial of degree $k+1$.
    \end{thm}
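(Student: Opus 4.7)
The plan is to leverage the new representation of Bulgarian Solitaire developed earlier in the paper to obtain explicit control of the preimage tree sitting above the recurrent cycle $\mathcal{C}_{(BW^k)^\ell}$, and similarly above $\mathcal{C}_{(WB^k)^\ell}$, then pass to the limit $\ell\to\infty$. In the new representation, the recurrent element attached to a concatenation $P^\ell$ is a word built from $\ell$ identical blocks, and I expect the preimage operation $\beta^{-1}$ to act by bounded-width local edits on such a word. Once $\ell$ exceeds the level being analysed, the available local edits stop depending on $\ell$, so the level-$j$ ancestor set is eventually constant in $\ell$. This is what produces the limit $H_{BW^k}(x)=\lim_\ell \mathcal{D}_{(BW^k)^\ell}(x)$ in the first place and gives a combinatorial meaning to each of its coefficients.

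Next I would package this level-by-level enumeration as a transfer matrix $M$ on a finite set of ``window states'' recording the local shape in the active region where preimages can still branch; the $(s,s')$-entry of $M$ counts the preimages having old state $s$ and new state $s'$. A standard computation then gives
\begin{equation*}
H_{BW^k}(x) \;=\; u^{\top}(I-xM)^{-1}v
\end{equation*}
for suitable vectors $u,v$, so that $H_{BW^k}(x)$ is rational with denominator dividing $\det(I-xM)$. Choosing the windows to be minimal, I expect the state space to have size $k+1$, matching both the predicted denominator degree and Pham's general bound $|P|=k+1$. Running the same procedure for $WB^k$ produces a second transfer matrix $M'$ and an analogous formula for $H_{WB^k}(x)$.

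Finally I would prove that $\det(I-xM)=\det(I-xM')$, so that the two generating functions share a denominator of degree $k+1$. The cleanest route is to exhibit, directly from the new representation, a natural pairing of window states between the $BW^k$-tree and the $WB^k$-tree under which the transition arrows reverse; this would make $M'$ conjugate to $M^{\top}$, forcing the characteristic polynomials to coincide. Identifying this pairing is the step I expect to be the main obstacle, because the dualisation $P\mapsto P^{*}$ is not obviously compatible with the $\beta$-dynamics on partitions: it combines reversal with colour-swap, and its effect on preimage trees only becomes transparent after moving into the new representation. Producing the identification, whether through an explicit combinatorial involution on window states or by checking that both matrices satisfy the same degree-$(k+1)$ recurrence, is what will settle Conjecture~\ref{con:P-and-P-dual} for the family $BW^k$ versus $WB^k$.
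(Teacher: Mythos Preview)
Your proposal is a plan, not a proof: you correctly anticipate that the new representation gives local control over preimages and that a linear-algebraic packaging will produce a rational $H_{BW^k}$ with denominator of degree $k+1$, but the crucial step --- exhibiting a pairing of window states that makes $M'$ conjugate to $M^\top$ --- is left entirely open, and you acknowledge as much. There is no reason to expect such a clean conjugacy. The paper's analysis of $WB^k$ reveals a structural asymmetry absent in $BW^k$: the root $C_1$ for $WB^k$ has an \emph{extra branch} (a third playable part) that has no counterpart on the $BW^k$ side, and the paper has to handle it separately via the identity $p_n = x^{-1}h_{n+1} - x^2 v_1 h_{n-2}$. This already suggests that the two transfer systems are not related by a simple transpose, so your proposed route to $\det(I-xM)=\det(I-xM')$ is likely to fail as stated.

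The paper's actual argument is quite different in spirit. Rather than a transfer matrix, it sets up an explicit system of linear equations among the tree generating functions $g_1,\dots,g_{k+1}$ using the $k$-fuse machinery and the polynomials $u_k,v_k$, and then solves by substitution to obtain $g_1 = A + f_k g_1$ for $BW^k$ (hence denominator $f_k-1$) and $g_1 = A' + p_k g_1$ for $WB^k$ (denominator $p_k-1$). The equality $f_k=p_k$ is then proved by deriving separate recurrences \eqref{equa:f-recurrence} and \eqref{equa:h-recurrence} for each side and checking, by a direct (and somewhat heavy) computation split into even/odd cases, that $p_k$ satisfies the $f$-recurrence. So the matching of denominators comes from verifying that two different recursive schemes produce the same output, not from any structural duality between the two preimage trees. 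If you want to pursue your transfer-matrix idea, you would need either to discover a non-obvious change of basis absorbing the extra branch, or to fall back on showing both characteristic polynomials satisfy the same recurrence --- which is essentially what the paper does.
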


    The paper is outlined as follows. In Section \ref{sec:setup}, we review some basic definitions and introduce our new representation. In Section \ref{sec:fuse}, we introduce fuses and pre-fuses, which will be important to our proof. Finally, we prove Theorem \ref{thm:BWB_WBW} in Section \ref{sec:thm-1.1} and Theorem \ref{thm:BWW_WBB} in Section \ref{sec:thm-1.2}.

\section{Set up}\label{sec:setup}

\subsection{Reversed Bulgarian Solitaire}\label{subsec:reversed_bs}

    It is actually more convenient to study the reversed Bulgarian Solitaire move rather than the (forward) Bulgarian Solitaire move. We give two analogous definition of the reversed BS move below.

    \begin{definition}[Reversed Bulgarian Solitaire move]\label{def:reversed_bs_move}
        For an element $\lambda$, a reversed Bulgarian Solitaire move $R_j$ maps $\lambda$ to $R_j(\lambda)$ as follows

        \begin{itemize}
            \item For Young diagrams: take out the $j$th row and insert it as the leftmost column.
            \item For a partition: take out the $j$th part and distribute it into the other parts, one for each.
        \end{itemize}
    \end{definition}

    For our move to make sense, the $j$th part needs to be as least $\ell(\lambda)-1$. Thus, the reversed BS moves are only defined for such parts. If $R_j$ is defined, we say the $j$th part is \textit{playable}. Figure \ref{fig:reversed_bs_ex} shows an example of $\lambda = (5,3,3,2)$, $R_1(\lambda)$ and $R_3(\lambda)$. Note that in the example, $R_2(\lambda)$ is also defined; however, since $\lambda_2 = \lambda_3$, $R_2(\lambda)$ and $R_3(\lambda)$ are the same. In general, for our convenience in later sections, if $\lambda_i = \lambda_{i+1} = \ldots = \lambda_j$, we will only consider $R_j(\lambda)$. Finally, in the example, $R_4(\lambda)$ is not defined since $\lambda_4 = 2 < 3 = \ell(\lambda) - 1$.

    \begin{figure}[h!]
        \centering
        \includegraphics[width = 0.4\textwidth]{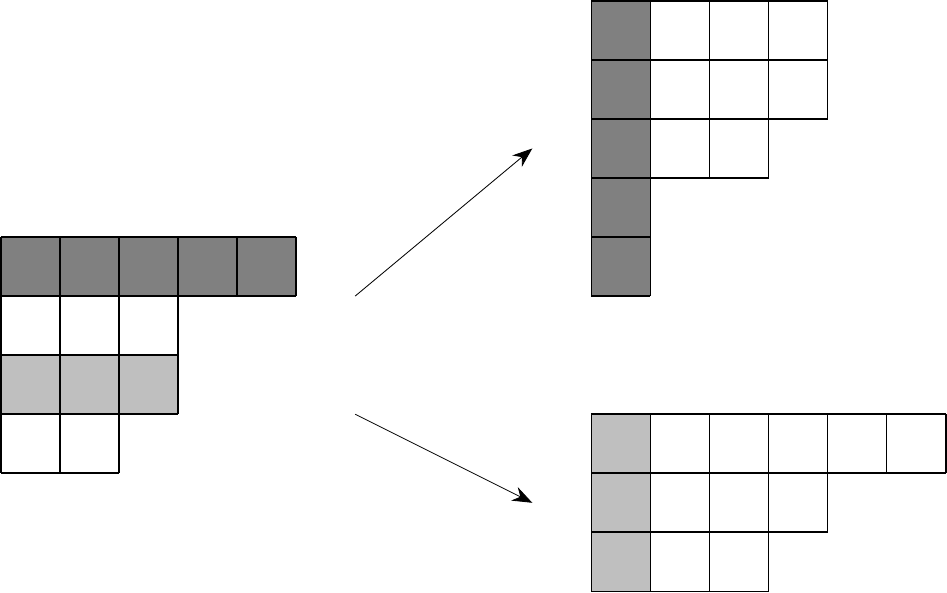}
        \caption{Reversed BS moves}
        \label{fig:reversed_bs_ex}
    \end{figure}

\subsection{New representation}\label{subsec:new_rep}

    Now we define our new representation of Bulgarian Solitaire.

    \begin{definition}\label{def:new_rep}
        Given a partition $\lambda=(\lambda_1 \geq \lambda_2 \geq \cdots \geq \lambda_\ell)$,
        we will instead view it as an infinite sequence $\lambda=(\lambda_1,\lambda_2,\ldots,\lambda_\ell,0,0,\ldots)$ of nonnegative integers that is eventually zero.
        For such a partition $\lambda$, define $\mu_{\lambda}= (\mu_1,\mu_2,\ldots)$ by 
        \[ \mu_i = \lambda_i - \lambda_{i+1}.\]
        In addition, if the $j$th part is playable and $\mu_j \neq 0$, we put a bar above $\mu_j$.
        Note that $\mu$ is also an infinite sequence of nonnegative integers which is eventually zero.
        Call this set of sequences $\BSfinite$.
    \end{definition}

    For example, for $\lambda = (5,3,3,2,0,0,\ldots)$, we have $\mu_\lambda = (\overline{2},0,\overline{1},2,0,0,\ldots)$. When the context is clear, we may omit the subscript $\lambda$. Observe that we can easily recover $\lambda$ from $\mu_\lambda$ by the following formula:
    \[ \lambda_i = \sum_{k = i}^\infty \mu_i. \]
    It is also not difficult to determine which part of $\mu$ is playable.
    
    \begin{lemma}\label{lem:valid-mu}
        Consider a sequence $\mu\in\BSfinite$, let $\ell$ be the largest index such that $\mu_{\ell} > 0$. There is a bar above $\mu_i$ if and only if $\sum_{k = i}^{\ell} \mu_i \geq \ell-1$.
    \end{lemma}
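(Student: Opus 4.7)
The lemma is essentially a bookkeeping exercise translating the playability condition on $\lambda$ into a condition on the sequence $\mu=\mu_\lambda$. My plan is to (a) identify $\ell(\lambda)$, the number of nonzero parts of $\lambda$, in terms of $\mu$; (b) express $\lambda_i$ itself in terms of the $\mu_k$'s via telescoping; and then (c) substitute into the inequality $\lambda_i \geq \ell(\lambda)-1$ that characterizes playability.

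For step (a), let $\ell$ be the largest index with $\mu_\ell>0$. Because $\mu_k=\lambda_k-\lambda_{k+1}=0$ for all $k>\ell$, the tail of $\lambda$ past position $\ell$ is constant, and since $\lambda$ is eventually zero this constant must be $0$. Hence $\lambda_{\ell+1}=0$ while $\lambda_\ell=\mu_\ell>0$, so $\ell(\lambda)=\ell$. For step (b), the recovery formula $\lambda_i=\sum_{k=i}^{\infty}\mu_k$ (already noted right after Definition~\ref{def:new_rep}) truncates to
\[
\lambda_i=\sum_{k=i}^{\ell}\mu_k,
\]
because $\mu_k=0$ for $k>\ell$. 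Combining (a) and (b), the condition $\lambda_i\geq \ell(\lambda)-1$ is precisely $\sum_{k=i}^{\ell}\mu_k\geq \ell-1$, which by Definition~\ref{def:reversed_bs_move} is exactly what playability of the $i$th part says.

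Finally, since a bar is placed above $\mu_i$ when the $i$th part is playable and $\mu_i\neq 0$, the lemma's stated inequality captures the playability half of that conjunction; the $\mu_i\neq 0$ half is implicit (if $\mu_i=0$ there is no bar to discuss, so the ``only if'' direction is vacuous under the convention fixed earlier about collapsing equal consecutive parts into a single representative). I would therefore state the proof as: by the computations above, playability of the $i$th part is equivalent to $\sum_{k=i}^{\ell}\mu_k\geq\ell-1$, and hence the bar appears exactly in the claimed circumstance. There is no real obstacle — the only subtlety worth flagging is this interplay with the $\mu_i=0$ convention, which is worth one clarifying sentence but nothing more.
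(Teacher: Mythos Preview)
Your argument is correct and is exactly the paper's approach, just unpacked: the paper's own proof is a single sentence observing that playability of the $j$th part means $\lambda_j\geq\ell(\lambda)-1$, and you have simply spelled out the identifications $\ell(\lambda)=\ell$ and $\lambda_i=\sum_{k=i}^{\ell}\mu_k$ that make this immediate. Your flag about the $\mu_i=0$ case is a genuine subtlety the paper's one-line proof glosses over (strictly speaking the ``if'' direction of the biconditional can fail when $\mu_i=0$, not the ``only if''), but your resolution via the earlier convention of only considering positions with $\mu_i\neq 0$ is the right way to read the statement.
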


    \begin{proof}
        This follows directly from the fact that the $j$th part of a partition $\lambda$ is playable if and only if its size is at least $\ell(\lambda) - 1$.
    \end{proof}

    From now on, we will refer to the elements in $\BSfinite$ by their new representation $\mu:=\mu_\lambda$ instead of the standard partition representation $\lambda$. Furthermore, when we refer to an element $\mu$ in $\BSfinite$, we assume that the bars in $\mu$ satisfy the conditions in Lemma \ref{lem:valid-mu}.

    The following lemma shows that this new representation behaves nicely under the reversed BS moves.

    \begin{lemma}\label{lem:bs_move_result}
        For any $\lambda$ such that the $j$th part is playable, let $\lambda' = R_j(\lambda)$, $\mu = \mu_{\lambda}$ and $\mu' = \mu_{\lambda'}$. Then the parts $\mu_i'$ are determined by the parts of $\mu$ in these three cases:

        \begin{itemize}

            \item[(1)] If $j = 1$ then
                \begin{align*}
                    \mu'_i = \begin{cases}
                        \mu_{i+1} ~~ &\text{if }  i \neq \lambda_1\\
                        \mu_{i+1}+1 ~~ &\text{if }  i = \lambda_1
                    \end{cases}
                \end{align*}

            \item[(2)] If $j\geq 2$ and $\lambda_j \neq j-1$
                \begin{align*}
                    \mu'_i = \begin{cases}
                        \mu_i ~~ &\text{if } i<j-1 \\
                        \mu_{i-1}+\mu_i ~~ &\text{if } i = j-1 \\
                        \mu_{i+1} ~~ &\text{if } i\geq j~\text{and}~ i \neq \lambda_j\\
                        \mu_{i+1}+1 ~~ &\text{if } i\geq j~\text{and}~ i = \lambda_j
                    \end{cases}
                \end{align*}

            \item[(3)] If $j\geq 2$ and 
            $\lambda_j = j-1$
                \begin{align*}
                    \mu'_i = \begin{cases}
                        \mu_i ~~ &\text{if } i<j-1 \\
                        \mu_{i-1}+\mu_i + 1 ~~ &\text{if } i = j-1 \\
                        \mu_{i+1} ~~ &\text{if } i\geq j
                    \end{cases}
                \end{align*}  
        \end{itemize}
    The bars on the parts of $\mu'$ are determined as follows. For $i \leq j-1$, put a bar above $\mu'_i$ if $\mu'_i\neq 0$. For $i\geq j$, put a bar above $\mu'_i$ if $\mu'_i \neq 0$ and $\sum_{k = j}^{i} \mu_k<3$.
    \end{lemma}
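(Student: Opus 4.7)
The plan is to prove all three formulas and the bar rule by a direct, unified computation, using only the definition $\mu_k = \lambda_k - \lambda_{k+1}$ and Lemma \ref{lem:valid-mu}. I would begin by writing the parts of $\lambda' = R_j(\lambda)$ explicitly in terms of $\lambda$: since $R_j$ first deletes row $j$ (shifting lower rows up by one) and then inserts a left column of height $\lambda_j$, one has
\[ \lambda'_i \;=\; \lambda_{i + [i \geq j]} + [i \leq \lambda_j], \]
with the convention $\lambda_k = 0$ for $k > \ell(\lambda)$. The playability assumption $\lambda_j \geq \ell(\lambda) - 1$ forces $\ell(\lambda') = \max(\ell(\lambda) - 1,\lambda_j) = \lambda_j$; in case (3) the extra equality $\lambda_j = j - 1$ forces $j = \ell(\lambda)$, so the row formerly below position $j$ is already empty.

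With this formula in hand, the piecewise description of $\mu'_i = \lambda'_i - \lambda'_{i+1}$ follows by splitting on the two natural boundaries: $i = j - 1$, where the row-removal induces an index shift, and $i = \lambda_j$, where the inserted column ends. For $i < j - 1$ the two Iverson indicators on $\lambda'_i$ and $\lambda'_{i+1}$ agree and cancel, giving $\mu'_i = \mu_i$. For $i \geq j$ the indicator difference $[i \leq \lambda_j] - [i+1 \leq \lambda_j]$ vanishes except at $i = \lambda_j$, so $\mu'_i = \mu_{i+1} + [i = \lambda_j]$. At the boundary $i = j - 1$ the difference telescopes across the removed part $\lambda_j$, combining two adjacent parts of $\mu$; in case (3) one picks up an extra $+1$ because $\lambda'_j = 0$, whereas in cases (1) and (2) $\lambda'_j = \lambda_{j+1} + 1$. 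Case (1) with $j = 1$ is simply the specialisation in which the ``left'' region is empty, so only the right-region formulas survive.

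For the bars I would invoke Lemma \ref{lem:valid-mu}: position $i$ of $\mu'$ carries a bar exactly when $\lambda'_i \geq \ell(\lambda') - 1 = \lambda_j - 1$. For $i \leq j - 1$ one has $\lambda'_i \geq \lambda_{j-1} + 1 > \lambda_j$, so playability is automatic and a bar appears whenever $\mu'_i \neq 0$. For $i \geq j$, substituting $\lambda'_i = \lambda_{i+1} + [i \leq \lambda_j]$ into $\lambda'_i \geq \lambda_j - 1$ gives $\lambda_{i+1} \geq \lambda_j - 2$, which rewrites via the telescoping identity $\sum_{k = j}^{i} \mu_k = \lambda_j - \lambda_{i+1}$ as the stated inequality $\sum_{k = j}^{i} \mu_k < 3$.

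The whole calculation is elementary; the only real obstacle is careful bookkeeping at the two boundary positions $i = j - 1$ and $i = \lambda_j$, together with the endpoint range $i > \lambda_j$ where $\lambda'_i$ silently vanishes because the inserted column does not reach that far. Keeping the two indicator functions $[i \geq j]$ and $[i \leq \lambda_j]$ separate throughout reduces the argument to a routine check of a handful of subcases.
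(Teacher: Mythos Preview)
Your proposal is correct and follows essentially the same approach as the paper: a direct computation of $\lambda'$ in terms of $\lambda$, followed by taking consecutive differences and reading off the playability condition via $\ell(\lambda')=\lambda_j$. The paper's proof is a terse sketch of exactly this argument; your version is simply more explicit, packaging the case analysis with the two Iverson indicators $[i\geq j]$ and $[i\leq\lambda_j]$ and noting the consequence $j=\ell(\lambda)$ in case~(3).
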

       When the context is clear, we will denote $\mu' = R_j(\mu)$.

    \begin{proof}
        The three cases follow the same reasoning: we distribute 1 to each part, so the difference between 2 consecutive parts stay the same except between $\lambda'_{j-1}$ and $\lambda'_j$. Since $\lambda'_j = \lambda_{j+1}+1$, we have $\lambda'_{j-1} - \lambda'_j = \lambda_{j-1} - \lambda_{j+1} = \mu_{j-1}+\mu_j$. However, observe that we add $1$ to part $\lambda_j$ but none to part $\lambda_{j}+1$, so the difference is increased by $1$. Finally, to determine the bars, observe that $\lambda_j$ is the length of $\lambda'$. Thus, for the $i$th part of $\lambda'$ to be playable, $\lambda'_i \geq \lambda_j - 1$, which means $\lambda_i \geq \lambda_j - 2$. This is obviously true for $i<j$, and is equivalent to the condition $\sum_{k = j}^{i} \mu_i<3$ for $i\geq j$.
    \end{proof}

    For example, in Figure \ref{fig:reversed_bs_ex}, $\mu = (\overline{2},0,\overline{1},2,0,0,\ldots)$, so $R_1(\mu) = (0,\overline{1},2,0,1,0,0,\ldots)$ and $R_3(\mu) = (\overline{2},\overline{1},\overline{3},0,0,\ldots)$.

\subsection{The system in the limit}\label{subsec:BSlimit}

    Let us now shift to our main concern of the paper, the {\it limiting version} of the Bulgarian Solitaire system. We will start with an example with the primitive necklace $P=BWW$ and its powers $P^1,P^2,P^3,\cdots$. 
    
    \begin{figure}[h!]
    \centering
        \begin{minipage}{.4\textwidth}
          \centering
            \includegraphics[scale = 0.3]{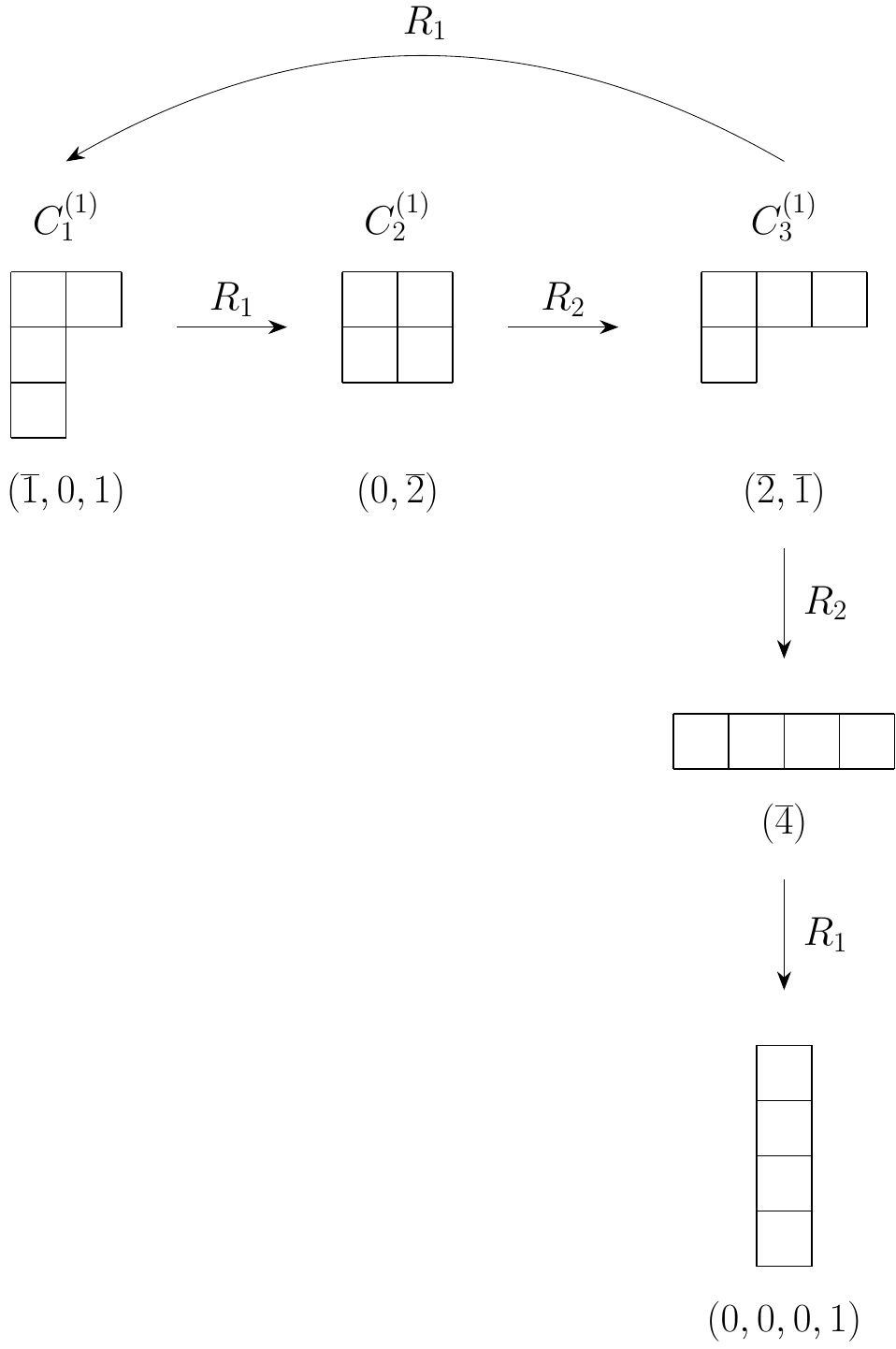}
            \caption{Digraph $\mathcal{O}_{(BWW)^1}$}
            \label{fig:forest_1}
        \end{minipage}%
        \begin{minipage}{.6\textwidth}
          \centering
            \includegraphics[scale = 0.3]{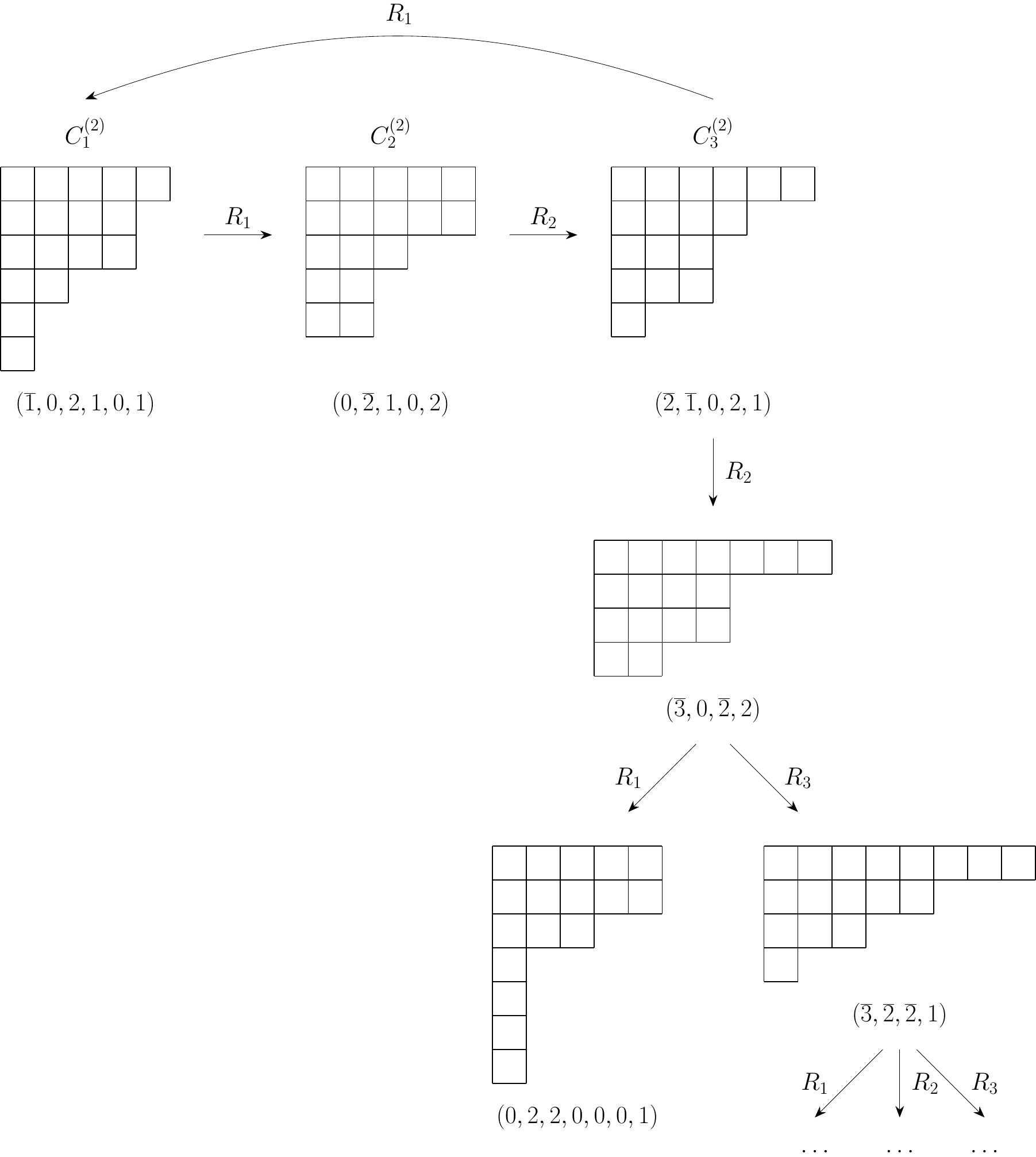}
            \caption{Part of the digraph $\mathcal{O}_{(BWW)^2}$}
            \label{fig:forest_2}
        \end{minipage}
    \end{figure}
    
    \begin{figure}[h!]
        \centering
        \includegraphics[scale = 0.3]{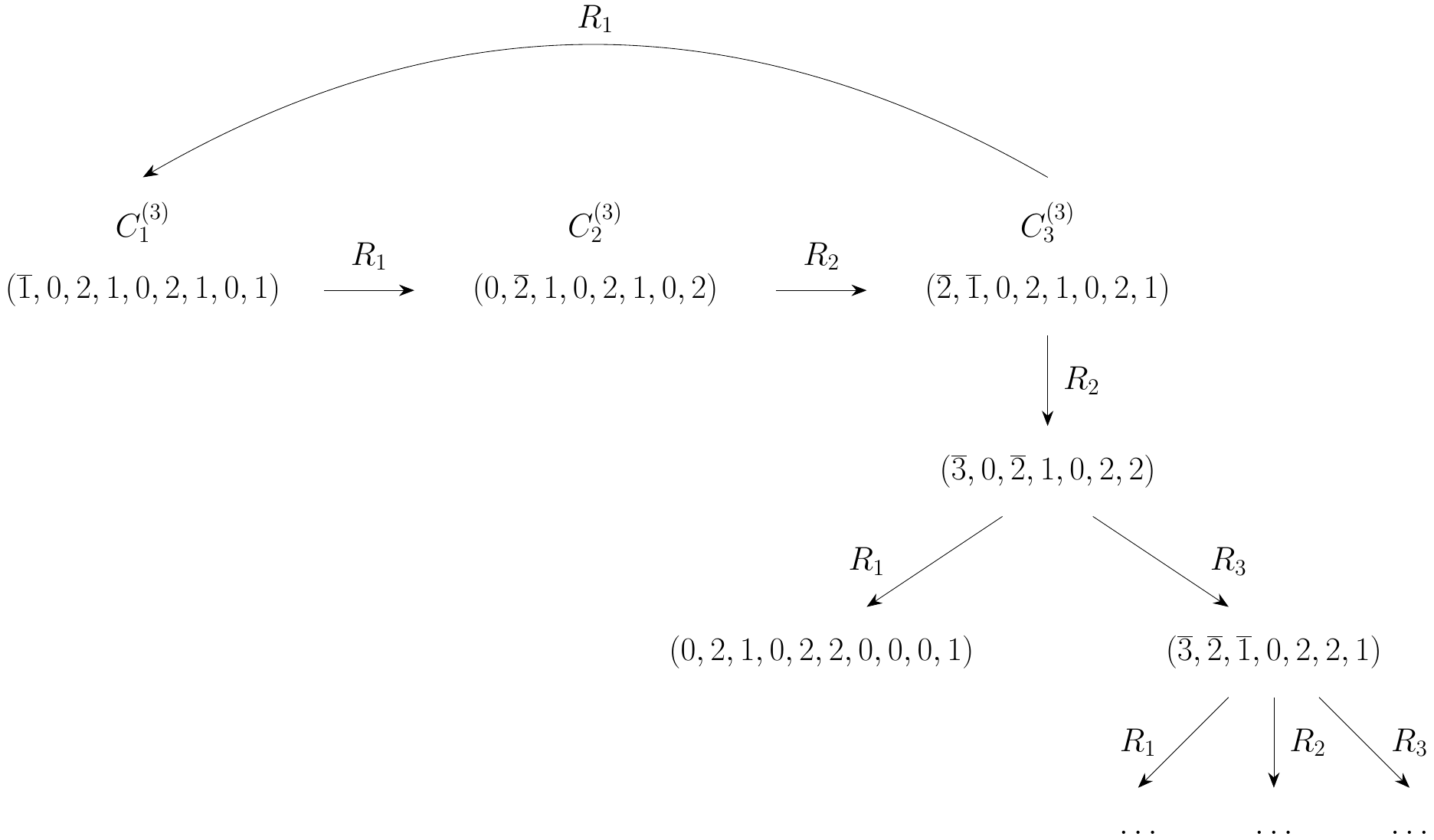}
        \caption{Part of the digraph $\mathcal{O}_{(BWW)^3}$}
        \label{fig:forest_3}
    \end{figure}
    
    Figures \ref{fig:forest_1}, \ref{fig:forest_2}, and \ref{fig:forest_3} show the digraphs for reverse BS orbits $\mathcal{O}_{(BWW)^1},\mathcal{O}_{(BWW)^2}, \mathcal{O}_{(BWW)^3}$. Observe that their recurrent sets 
    $$
    \begin{aligned}
    \{C^{(1)}_1,C^{(1)}_2,C^{(1)}_3\}&\leftrightarrow \{BWW, \quad WBW, \quad WWB\},\\
    \{C^{(2)}_1,C^{(2)}_2,C^{(2)}_3\} &\leftrightarrow \{BWWBWW,\quad WBWWBW,\quad WWBWWB\} \\
    \{C^{(3)}_1,C^{(3)}_2,C^{(3)}_3\}&\leftrightarrow \{BWWBWWBWW, \quad WBWWBWWBW,\quad WWBWWBWWB\}\\
    \vdots
    \end{aligned}
    $$
    are in bijection, corresponding to the words
    within a cyclic equivalence class of necklaces of the form $P^\ell$
    for $\ell=1,2,3$. However, note that these bijections appear to extend to
    natural inclusions of digraphs 
    $$
    \mathcal{O}_{(BWW)^1} \hookrightarrow \mathcal{O}_{(BWW)^2} \hookrightarrow \mathcal{O}_{(BWW)^3} \hookrightarrow \cdots.
    $$
    Furthermore, the first three levels of $\mathcal{O}_{(BWW)^2}$ and $\mathcal{O}_{(BWW)^3}$ are isomorphic.
    
    Indeed, Pham \cite{pham2022limiting} showed that for any $m$, one can find $L(m)$ large enough such that the first $m$ levels of $\mathcal{O}_{(BWW)^\ell}$ are isomorphic for all $\ell > L(m)$. In other words, $\mathcal{O}_{(BWW)^\ell}$ converges to a digraph $\mathcal{O}_{(BWW)^\infty} = \lim_{\ell\rightarrow\infty}\mathcal{O}_{(BWW)^\ell}$. The recurrent set $\{C_1,C_2,C_3\}$ of $\mathcal{O}_{(BWW)^\infty}$ is $\{(BWW)^\infty, (WBW)^\infty, (WWB)^\infty\}$, which is still in bijection with $\{BWW,WBW,BWB\}$. Observe that $C_1$, $C_2$, and $C_3$ are infinite sequences with period $3$.
     
    Pham generalized this idea to every primitive necklace $P$ of length $|P|=n$. 
    She showed in \cite{pham2022limiting} that for the game digraphs $\mathcal{O}_{P^\ell}$, 
    the bijections of their recurrent sets $\{C^{(\ell)}_1,C^{(\ell)}_2,\ldots,C^{(\ell)}_n\}$
    extend to digraph inclusions
    $$
     \mathcal{O}_{P} \hookrightarrow \mathcal{O}_{P^2} \hookrightarrow \mathcal{O}_{P^3} \hookrightarrow \cdots
    $$
    which converge to a digraph $\mathcal{O}_{P^\infty}$, with recurrent set $\{C_1,C_2,\ldots,C_n\}$. Each $C_i$ in the recurrent set is an infinite sequence with period $n$. 

    Furthermore, one can write out the $C_i$'s in the recurrent set $\{C_1,C_2,\ldots,C_n\}$ in terms of the new representation as follows. Let $(b_1,b_2,\ldots,b_n)$ be a word in the cyclic equivalence class of the necklace $P$ (where $|P| = n$), then the corresponding $C_i$ in the recurrent set of $\mathcal{O}_{P^\infty}$ is $(\mu_1,\mu_2,\ldots)$ where
    \begin{align}\label{eqn:mu-from-b}
        \mu_i = \begin{cases}
            2 ~~ &\text{if } b_ib_{i+1} = BW \\
            1 ~~ &\text{if } b_ib_{i+1} = BB~\text{or}~WW \\
            0 ~~ &\text{if } b_ib_{i+1} = WB
        \end{cases}.
    \end{align}
    Here the indices are taken mod $n$. For example, the recurrent set of $\mathcal{O}_{(BWW)^\infty}$ is
    \[ \{(\overline{2},\overline{1},0,2,1,0,\ldots),\quad (0,\overline{2},1,0,2,1,\ldots),\quad (\overline{1},0,2,1,0,2,\ldots)\}. \]
    The following properties are straightforward from (\ref{eqn:mu-from-b}).

    \begin{lemma}\label{lem:tail-P}
        Let $\mu$ be an element in the recurrent set of $\mathcal{O}_{P^\infty}$ where $|P| = n$, then we have
        \begin{itemize}
            \item $\mu_i \in \{0,1,2\}$ for all $i\geq 1$; furthermore, the $0$'s and $2$'s alternate;
            \item $\mu_i = \mu_{i+n}$ for all $i\geq 1$; and
            \item $\mu_{i}+\mu_{i+1}+\ldots+\mu_{i+n-1} = n$ for all $i\geq 1$.
        \end{itemize}
        We also say that this sequence has period $n$.
    \end{lemma}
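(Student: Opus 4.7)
The plan is to read all three properties straight off the defining equation (\ref{eqn:mu-from-b}), with essentially no dynamical input needed beyond the description of the recurrent elements. Since $\mu_i$ is determined purely by the pair $b_i b_{i+1}$ in the periodic word $P^\infty$, everything should reduce to a short combinatorial analysis of how $\{B,W\}$-pairs arrange themselves cyclically.

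For the first bullet, I would note that $\mu_i \in \{0,1,2\}$ is immediate from the three-case formula. To get alternation of $0$'s and $2$'s, I trace through the word: $\mu_i = 0$ forces $b_i b_{i+1} = WB$, so $b_{i+1} = B$; reading further, while we remain inside a run of $B$'s we see only $\mu = 1$ (from $BB$ pairs), and the first time the run ends with a $W$ we get exactly one $\mu = 2$ (from $BW$) before the next $W$-run begins. The symmetric argument starting from $\mu_i = 2$ shows the next value in $\{0,2\}$ we encounter must be $0$. So $0$'s and $2$'s must strictly alternate, separated only by some (possibly empty) string of $1$'s.

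The second bullet is immediate: the word $P^\infty$ satisfies $b_i = b_{i+n}$, and $\mu_i$ is a function of $(b_i, b_{i+1})$, hence $\mu_i = \mu_{i+n}$.

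For the third bullet, by periodicity it suffices to prove $\mu_1 + \cdots + \mu_n = n$. I would count by pair type in one full period: if $a,b,c,d$ denote the numbers of cyclically consecutive pairs of types $BW$, $WB$, $BB$, $WW$ respectively inside one period of length $n$, then $a+b+c+d = n$ and, using (\ref{eqn:mu-from-b}), $\sum_{i=1}^n \mu_i = 2a + c + d$. In any cyclic binary word the number of $BW$-descents equals the number of $WB$-ascents, so $a = b$, giving $2a + c + d = a + b + c + d = n$. I do not expect any obstacle here; the only subtlety worth being explicit about is the cyclic counting identity $a = b$, which is why the sum works out to exactly $n = |P|$ rather than to something depending on the specific word.
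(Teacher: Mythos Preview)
Your proposal is correct and follows essentially the same approach as the paper: both read all three properties directly off formula~(\ref{eqn:mu-from-b}), argue alternation of $0$'s and $2$'s by tracing runs of $B$'s and $W$'s in the necklace, and obtain periodicity immediately from $b_i=b_{i+n}$. The only minor difference is in the third bullet: the paper deduces $\#\{0\text{'s}\}=\#\{2\text{'s}\}$ in one period from the alternation just proved, whereas you count pair types and invoke the cyclic identity $\#(BW)=\#(WB)$; these are two phrasings of the same fact.
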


    \begin{proof}
        We get $\mu_i = \mu_{i+n}$ for all $i\geq 1$ from (\ref{eqn:mu-from-b}). Now, we prove that the $0$'s and $2$'s alternate. A $2$ only appears in the sequence if in the necklace we go from $B$ to $W$. Then the next entries will all be $1$ until in the necklace we go back from $W$ to $B$. When we go from $W$ to $B$, the corresponding is $0$, so we cannot have two consecutive $2$'s. The same argument shows that we cannot have two consecutive $0$'s. This also implies the third condition. Since $\mu_i \in \{0,1,2\}$ for all $i\geq 1$, and the $0$'s and $2$'s alternate, among every $n$ consecutive entries, there are as many $0$'s as $2$'s, so the sum is $n$.
    \end{proof}

    \begin{definition}\label{def:tail-P}
        We call a (possibly finite) sequence of integers $(a_1,a_2,\ldots)$ a \textit{proper tail of period $n$} if it satisfies the conditions in Lemma \ref{lem:tail-P} with period $n$. If the sequence is finite, we require the $0$'s and $2$'s to alternate when the sequence is read cyclically.
    \end{definition}

    For example, the sequence $(2,1,0,2,1,0,\ldots)$ is a proper tail of period $3$. However, the sequence $(2,1,0,2)$ is not a proper tail because if we read the sequence cyclically, we obtain the sequence $(2,1,0,2,2,1,0,2,\ldots)$ in which the $0$'s and $2$'s do not alternate. On the other hand, the sequence $(2,1,0,2,1,0)$ is a proper tail.

    \begin{lemma}\label{lem:tail-biject-P}
        The set of necklaces of length $n$ bijects with the set of proper tails of period $n$ with length $n$.
    \end{lemma}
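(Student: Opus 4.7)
The plan is to construct explicit mutually inverse maps using equation~(\ref{eqn:mu-from-b}). Define the forward map $\Phi$ sending a cyclic word $b=(b_1,\ldots,b_n)\in\{B,W\}^n$ (with $b_{n+1}:=b_1$) to the sequence $(\mu_1,\ldots,\mu_n)$ prescribed by~(\ref{eqn:mu-from-b}). First I would verify that $\Phi(b)$ is a proper tail of length $n$. The entries lie in $\{0,1,2\}$ by construction, and $0$s and $2$s alternate cyclically because a $WB$-transition in the cyclic word (producing a $0$) cannot be followed, going around the cycle, by another $WB$-transition without an intervening $BW$-transition (producing a $2$). The sum equals $n$ since the numbers of $BW$- and $WB$-transitions coincide, yielding $2k+(n-2k)\cdot 1=n$ for some $k$. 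This is essentially the argument of Lemma~\ref{lem:tail-P}.

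Next I would build the inverse $\Psi$ by local recovery from any non-$1$ entry. Given a proper tail $(a_1,\ldots,a_n)$ with some $a_{i_0}\in\{0,2\}$, set $(b_{i_0},b_{i_0+1})=(B,W)$ if $a_{i_0}=2$ or $(W,B)$ if $a_{i_0}=0$; then propagate cyclically using the rule $b_{i+1}=b_i$ if $a_i=1$, and flipping $b_{i+1}$ relative to $b_i$ if $a_i\in\{0,2\}$ (with the sign of the flip dictated by whether $a_i=2$ or $a_i=0$). Consistency upon returning to $i_0$ is guaranteed precisely by the cyclic alternation of $0$s and $2$s, which ensures the $B\leftrightarrow W$ flips encountered around the cycle pair up evenly and in the correct order.

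The remaining step is to check $\Phi\circ\Psi=\mathrm{id}$ and $\Psi\circ\Phi=\mathrm{id}$, each of which reduces to unpacking~(\ref{eqn:mu-from-b}) at each position and observing that the transition type $b_ib_{i+1}$ recorded by $\mu_i$ is exactly what $\Psi$ uses to reconstruct $b_{i+1}$ from $b_i$. The main subtlety I anticipate is the boundary case of the constant tail $(1,1,\ldots,1)$, which admits two natural preimages $B^n$ and $W^n$ under $\Phi$; here a convention or a restriction to necklaces containing both letters is needed for the map to be a clean bijection, but this causes no trouble in the later applications of the lemma, where the necklaces of interest are primitive and non-monochromatic.
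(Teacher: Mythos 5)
Your construction is essentially the paper's own proof: both invert the map in equation~(\ref{eqn:mu-from-b}) by fixing the two letters at a non-$1$ entry (the paper anchors at the first $2$, setting $b_i=B$, $b_{i+1}=W$) and then propagating through the $1$'s, with the cyclic alternation of $0$'s and $2$'s guaranteeing consistency when the propagation closes up around the cycle. Your final remark is in fact a refinement rather than a defect: the paper's assertion that the map is ``clearly injective'' silently fails for the monochromatic words $B^n$ and $W^n$, which both map to the all-ones tail $(1,1,\ldots,1)$, so the convention or restriction to non-monochromatic necklaces that you propose is exactly what is needed to make the statement literally a bijection, and it is harmless for the later applications, which concern primitive necklaces of length at least~$2$.
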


    \begin{proof}
        The bijection is the map defined by (\ref{eqn:mu-from-b}). Clearly, the map is injective. Constructing the inverse is also simple. For a proper tail $(a_1,\ldots,a_n)$, let $i$ be the smallest index such that $a_i = 2$. We construct the necklace $(b_1,\ldots,b_n)$ by first setting $b_i = B$ and $b_{i+1} = W$. Next, we iterate from $j:=i+1$ to $n$; then, we iterate from $j:=1$ to $i-1$. In each iteration, if $a_j = 0$ then set $b_j = B$, and if $a_j = 2$ then set $b_j = W$. If $a_j = 1$ then set $b_{j} = b_{j-1}$ if $j\neq 1$ and $b_j = b_n$ is $j = 1$. The condition that the $0$'s and $2$'s alternate when the sequence is read cyclically assures that we get the correct inverse.
    \end{proof}

    Therefore, from now, we can associate proper tails with necklaces.

    \begin{definition}
        Let $a = (a_1,a_2,\ldots)$ be a proper tail of period $n$. We say $a$ is a \textit{proper tail of $P$}, where $P$ is a primitive necklace of length $n$, if the bijection defined by (\ref{eqn:mu-from-b}) maps $(a_1,\ldots,a_n)$ to a necklace in the cyclic equivalence class of $P$.
    \end{definition}

    Now, we characterize the sequences of nonnegative integers $\mu=(\mu_1,\mu_2,\ldots)$ that can occur in the limit of the Bulgarian digraphs $\mathcal{O}_{P^\infty}$, along with the possible positions of bars $\overline{\mu}_j$ indicating that a reversed BS move $R_j$ in position $j$ is applicable.

    \begin{definition}\label{def:O_P_infty}
        For a primitive necklace $P$ with $|P| = n$, we define $\mathcal{O}'_P$ to be the set of all $\nu$ that can be constructed as follows.
        \begin{enumerate}
            \item Pick $\mu \in \mathcal{O}_{P^\ell}$ for some $\ell$ such that there is an index $i$ satisfying
            \begin{itemize}
                \item $(\mu_i,\mu_{i+1},\ldots,\mu_{i+n-1})$ is a proper tail of $P$, and
                \item none of the entries $\mu_i,\mu_{i+1},\ldots,\mu_{i+n-1}$ are barred, i.e. none of the positions $i,i+1,\ldots,i+n-1$ are playable.
            \end{itemize}
            \item Replace $(\mu_{i+n},\mu_{i+n+1},\ldots)$ with infinitely many copies of $(\mu_i,\mu_{i+1},\ldots,\mu_{i+n-1})$.
        \end{enumerate}
    \end{definition}

    \begin{prop}\label{prop:O_P_infty_elem}
        For any primitive necklace $P$, the elements $\nu \in \mathcal{O}'_P$ are exactly the limits $\nu = \lim_{j\rightarrow\infty}\mu^{(\ell+j)}$ of convergent sequences $(\mu^{(\ell+1)},\mu^{(\ell+2)},\ldots)$ with $\mu^{(\ell+j)}\in \mathcal{O}_{P^{\ell+j}}$ for all $j$. In other words, $\mathcal{O}'_P = \mathcal{O}_{P^\infty}$.
    \end{prop}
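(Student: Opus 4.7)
The proposition identifies two definitions of $\mathcal{O}_{P^\infty}$: the explicit one in Definition~\ref{def:O_P_infty} and the topological limit through convergent sequences. I would prove both inclusions, first fixing the topology: $\mu^{(j)} \to \nu$ means that for every $N$, all but finitely many $\mu^{(j)}$ agree with $\nu$ on positions $1,\ldots,N$, with bars included.

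For $\mathcal{O}'_P \subseteq \mathcal{O}_{P^\infty}$, I would start with $\nu$ built from $\mu \in \mathcal{O}_{P^\ell}$ and tail-index $i$ as in Definition~\ref{def:O_P_infty}. Let $C$ be the recurrent element of $\mathcal{C}_{P^\ell}$ reached from $\mu$ under forward BS, and let $\sigma = (R_{j_1}, R_{j_2}, \ldots, R_{j_k})$ be the reversed BS sequence taking $C$ back to $\mu$. Define $C^{(j)}$ as the recurrent element of $\mathcal{C}_{P^{\ell+j}}$ obtained from $C$ by inserting $j$ additional copies of the proper tail of $P$ before the trailing zeros. Apply $\sigma$ to $C^{(j)}$ to obtain the putative $\mu^{(\ell+j)}$. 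The plan is to show that (a) each move in $\sigma$ remains legal starting from $C^{(j)}$; (b) the resulting $\mu^{(\ell+j)}$ agrees with $\mu$ through position $i + n - 1$ and then continues the proper tail periodically for $j$ additional periods; and (c) consequently $\mu^{(\ell+j)} \to \nu$.

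For $\mathcal{O}_{P^\infty} \subseteq \mathcal{O}'_P$, let $\nu = \lim_j \mu^{(\ell+j)}$ with each $\mu^{(\ell+j)} \in \mathcal{O}_{P^{\ell+j}}$. For large $j$, $\mu^{(\ell+j)}$ agrees with $\nu$ on a long initial segment that contains a length-$n$ window $[i, i+n-1]$ lying entirely in the periodic tail region. By Lemma~\ref{lem:valid-mu}, positions close to the end of support have small suffix sums and so carry no bars; choosing the window to be the last copy of the proper tail just before the trailing zeros makes it bar-free as soon as $\ell + j \geq 2$. Then $\mu^{(\ell+j)}$ itself serves as the witness $\mu$ required by Definition~\ref{def:O_P_infty}, and the construction from it recovers $\nu$.

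The main obstacle is step (a) of the first inclusion: verifying that the move sequence $\sigma$ replayed on $C^{(j)}$ yields the claimed extension of $\mu$. I expect this to require an induction on the length of $\sigma$ together with careful bookkeeping through the three cases of Lemma~\ref{lem:bs_move_result}, showing that at each intermediate state the periodic suffix remains bar-free and is left untouched by the next move $R_{j_t}$, so that the two runs stay in lockstep on positions $1,\ldots,i+n-1$. The bar-free window hypothesis in Definition~\ref{def:O_P_infty} is precisely what makes this invariance survive the induction.
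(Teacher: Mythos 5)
Your overall plan (fix the product topology, prove two containments) is sound, but both halves as written have real gaps, and the forward half misses the short argument the paper actually uses. For $\mathcal{O}'_P \subseteq \mathcal{O}_{P^\infty}$ you propose to replay the reverse-BS path $\sigma$ from the recurrent element $C$ on an enlarged recurrent element $C^{(j)}$; this amounts to re-deriving Pham's digraph inclusions $\mathcal{O}_{P^\ell} \hookrightarrow \mathcal{O}_{P^{\ell+j}}$ from scratch, and the step you yourself flag as ``the main obstacle'' (legality of each replayed move and lockstep of the two runs) is precisely the entire content of that inclusion --- as written it is a promise, not a proof. Moreover your invariant is stated too loosely: the periodic suffix is \emph{not} ``left untouched'' by a move $R_{j_t}$, since by Lemma \ref{lem:bs_move_result} every such move shifts all later entries one index to the left, so the lockstep claim must be formulated for shifted windows and then pushed through the three cases. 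The paper avoids all of this. It takes the witness $\mu^{(\ell)}$ itself, inserts $j$ copies of the bar-free window $(\mu_i,\ldots,\mu_{i+n-1})$ between positions $i+n-1$ and $i+n$ to produce $\mu^{(\ell+j)} \in \mathcal{O}_{P^{\ell+j}}$, and then checks only that the bars are unchanged: since a full period sums to $n$ (Lemma \ref{lem:tail-P}), for each $k\le i-1$ the suffix sum $\sum_{r\ge k}\mu_r$ and the number of parts both increase by exactly $nj$, and playability (Lemma \ref{lem:valid-mu}) compares exactly these two quantities. Your sketch never exploits this cancellation, which is the one idea that makes the verification a one-line count rather than a case analysis.

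The converse containment in your proposal has a more serious gap: you assume that for large $j$ the element $\mu^{(\ell+j)}$ ends with ``copies of the proper tail just before the trailing zeros,'' and you place the bar-free window there. A general element of the finite orbit $\mathcal{O}_{P^{\ell+j}}$ is just some partition in that orbit and need not exhibit any periodic tail region at all (its $\mu$-representation can be essentially arbitrary away from the stabilized initial segment); the near-staircase periodic structure is available only for elements of bounded level in large orbits, via Pham's stabilization of the first $m$ levels, and mere convergence of the sequence does not hand it to you. So the step ``$\mu^{(\ell+j)}$ itself serves as the witness required by Definition \ref{def:O_P_infty}'' is unsupported: you must either invoke Pham's level-stabilization explicitly or prove that stabilization of initial segments forces the remaining weight of $\mu^{(\ell+j)}$ into proper-tail-periodic form with an unbarred length-$n$ window. (To be fair, the paper's own proof only argues the containment $\mathcal{O}'_P \subseteq \mathcal{O}_{P^\infty}$ and leaves the converse implicit, so your instinct to address it is good --- but it needs an actual argument, not the structural assumption you currently make.)
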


    \begin{proof}
        Let $\mu^{(\ell)} \in \mathcal{O}_{P^\ell}$ for some $\ell$, and $i$ is an index such that $(\mu_i,\mu_{i+1},\ldots,\mu_{i+n-1})$ is a proper tail of $P$. Moreover, none of the entries $\mu_i,\mu_{i+1},\ldots,\mu_{i+n-1}$ are barred. Then $\mu^{(\ell+j)}$ can be obtained from $\mu^{(\ell)}$ by adding $j$ copies of $(\mu_i,\mu_{i+1},\ldots,\mu_{i+n-1})$ between $\mu_{i+n-1}$ and $\mu_{i+n}$. Furthermore, we claim that the bars of $\mu^{(\ell+j)}$ are in the exact same places as the bars in $\mu^{(\ell)}$. This is because the bars in $\mu^{(\ell)}$ can only possibly be on the first $i-1$ entries $\mu_1,\ldots,\mu_{i-1}$. By adding $j$ copies of $(\mu_i,\mu_{i+1},\ldots,\mu_{i+n-1})$ between $\mu_{i+n-1}$ and $\mu_{i+n}$, in $\mu^{(\ell+j)}$, there are $nj$ more parts than in $\mu^{(\ell)}$. In addition, since $\mu_i + \ldots + \mu_{i+n-1} = n$, for any $1\leq k \leq i-1$, $\sum_{r=k}^\infty \mu_{r}^{(\ell+j)} = \sum_{r=k}^\infty \mu_{r}^{(\ell)} + nj$. That is, for any $1\leq k \leq i-1$, the sum $\sum_{r=k}^\infty \mu_{r}^{(\ell+j)}$ is also exactly $nj$ more than $\sum_{r=k}^\infty \mu_{r}^{(\ell)}$. Thus, the $k$th part ($1\leq k\leq i-1$) is playable in $\mu^{(\ell+j)}$ if and only if it is playable in $\mu^{(\ell)}$.
        
        From this, $\lim_{j\rightarrow\infty}\mu^{(\ell+j)}$ can be obtained from $\mu^{(\ell)}$ by replace $(\mu_{i+n},\mu_{i+n+1},\ldots)$ with infinitely many copies of $(\mu_i,\mu_{i+1},\ldots,\mu_{i+n-1})$. This gives the corresponding element in $\mathcal{O}'_{P}$.
    \end{proof}

    This allows us to define the limit version of the Bulgarian Solitaire system.

    \begin{definition}\label{def:BSlimit}
        We define $\BSlimit$ to be the set of all elements obtained by the construction in Definition \ref{def:O_P_infty} for all primitive necklaces $P$.
    \end{definition}

    We can define the reversed BS moves on $\BSlimit$ similar to Lemma \ref{lem:bs_move_result}.

    \begin{lemma}
        \label{lem:bs_move_lim}
        If the $j$th part of $\mu$ is playable, i.e. there is a bar above $\mu_j$, we define $\mu' := R_j(\mu)$ as follows.
        
        \begin{itemize}

            \item[(1)] If $j = 1$ then
                \begin{align*}
                    \mu'_i = \mu_{i+1}
                \end{align*}

            \item[(2)] If $j\geq 2$ then
                \begin{align*}
                    \mu'_i = \begin{cases}
                        \mu_i ~~ &\text{if } i<j-1 \\
                        \mu_{i-1}+\mu_i ~~ &\text{if } i = j-1 \\
                        \mu_{i+1} ~~ &\text{if } i\geq j
                    \end{cases}
                \end{align*}
        \end{itemize}
    The bars on the parts of $\mu'$ are determined as follows. For $i \leq j-1$, put a bar above $\mu'_i$ if $\mu'_i\neq 0$. For $i\geq j$, put a bar above $\mu'_i$ if $\mu'_i \neq 0$ and $\sum_{k = j}^{i} \mu_k<3$.
        
    \end{lemma}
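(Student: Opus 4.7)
My plan is to derive the stated formulas by applying Lemma \ref{lem:bs_move_result} at a finite level and then passing to the limit. Fix $\mu \in \BSlimit$ with a bar above $\mu_j$. By Proposition \ref{prop:O_P_infty_elem}, there exist a primitive necklace $P$ of length $n$, a level $\ell_0$, an element $\mu^{(\ell_0)} \in \mathcal{O}_{P^{\ell_0}}$, and an index $i \geq j+1$ such that $(\mu_i,\mu_{i+1},\ldots,\mu_{i+n-1})$ is an unbarred proper tail of $P$ and $\mu$ is obtained from $\mu^{(\ell_0)}$ by replacing everything after position $i+n-1$ with infinitely many copies of this block. For each $\ell\geq \ell_0$, inserting $\ell-\ell_0$ further copies yields $\mu^{(\ell)} \in \mathcal{O}_{P^\ell}$ converging to $\mu$.

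The bar-preservation argument in the proof of Proposition \ref{prop:O_P_infty_elem} shows that position $j$ is playable in every $\mu^{(\ell)}$, so Lemma \ref{lem:bs_move_result} applies and produces $R_j(\mu^{(\ell)})$. The key point is that the exceptional indices in Lemma \ref{lem:bs_move_result} are controlled by $\lambda^{(\ell)}_j = \sum_{k\geq j}\mu^{(\ell)}_k$, which grows linearly in $\ell$ since each inserted block contributes $n$ to the sum. Hence for any fixed coordinate the value $\lambda^{(\ell)}_j$ eventually overshoots it, so the ``$+1$'' corrections in cases (1) and (2) of Lemma \ref{lem:bs_move_result} no longer affect that coordinate, and the condition $\lambda^{(\ell)}_j = j-1$ of case (3) fails for all large $\ell$. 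Taking the limit coordinate by coordinate therefore reproduces exactly the simplified formulas in the statement. The bar rule in Lemma \ref{lem:bs_move_result} is already phrased through the partial sums $\sum_{k=j}^{i}\mu_k$, a local quantity insensitive to truncation, so it passes verbatim to the limit.

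Finally, I would verify that $R_j(\mu)\in\BSlimit$. Case (1) deletes one entry from the prefix and shifts the tail leftward by one position; case (2) merges $\mu_{j-1}$ with $\mu_j$ and again left-shifts the tail. In both instances the output still terminates in infinitely many copies of a cyclic shift of the same proper tail of $P$, which by Lemma \ref{lem:tail-P} is still a proper tail of $P$; hence $R_j(\mu)\in\mathcal{O}_{P^\infty}\subseteq\BSlimit$. The main obstacle is bookkeeping: one must confirm that bars in $R_j(\mu)$ occur at only finitely many positions, but this follows because the tail entries have average value $1$, so the bar threshold $\sum_{k=j}^{i}\mu_k<3$ fails once $i$ is large enough.
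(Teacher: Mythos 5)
Your proposal is correct and takes essentially the same route as the paper: both arguments reduce to Lemma \ref{lem:bs_move_result} and discard the cases tied to the value $\lambda_j$ on the grounds that $\lambda_j=\sum_{k\geq j}\mu_k$ is infinite in $\BSlimit$ --- the paper asserts this directly on the limit object, while you realize it by approximating with $\mu^{(\ell)}\in\mathcal{O}_{P^{\ell}}$ and letting $\lambda_j^{(\ell)}\to\infty$ so the ``$+1$'' corrections and the case $\lambda_j=j-1$ escape to infinity. Your closing check that $R_j(\mu)$ stays in $\BSlimit$ is a harmless addition beyond the paper's one-line justification.
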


    \begin{proof}
        This construction is consistent with the one in Lemma \ref{lem:bs_move_result}. The only difference is that we exclude any case that includes $\lambda_j$. Recall that in Lemma \ref{lem:bs_move_result}, we need these cases because when playing $R_j$, we add $1$ to $\lambda_j$ but none to $\lambda_{j+1}$. In $\BSlimit$, $\lambda_j$ is $\infty$, so this situation does not arise, and we can exclude these cases.
    \end{proof}

    From now on, unless stated otherwise, when we refer to ``an element $\mu$'', we mean that $\mu$ is in $\BSlimit$. Finally, recall that we can write out the recurrent cycle elements in $\mathcal{O}_{P^\infty}$ (in $\BSlimit$). It is a bit trickier to decide which part is playable, i.e. where to put the bars. One strategy is to play the first nonzero part of one element to see which part of the next element is playable. For example, the recurrent set of $\mathcal{O}_{(BWW)^\infty}$ is
    \[ \{(\overline{2},\overline{1},0,2,1,0,\ldots),\quad (0,\overline{2},1,0,2,1,\ldots),\quad (\overline{1},0,2,1,0,2,\ldots)\}. \]
    To see where to put the bars, playing $R_1(2,1,0,2,1,0,\ldots)$, we have the next element is $(\overline{1},0,2,1,0,2,\ldots)$. Playing $R_1(\overline{1},0,2,1,0,2,\ldots)$, we get $(0,\overline{2},1,0,2,1,\ldots)$. Finally, playing $R_2(0,\overline{2},1,0,2,1,\ldots)$, which is the first playable part of $(0,\overline{2},1,0,2,1,\ldots)$, gives $(\overline{2},\overline{1},0,2,1,0,\ldots)$. Thus, the recurrent set of $\mathcal{O}_{(BWW)^\infty}$, with the bars, is
    \[ \{(\overline{2},\overline{1},0,2,1,0,\ldots),\quad (0,\overline{2},1,0,2,1,\ldots),\quad (\overline{1},0,2,1,0,2,\ldots)\}. \]
    One can check that these bars are consistent with the bars in the finite version in Figures \ref{fig:forest_1}, \ref{fig:forest_2}, and \ref{fig:forest_3}.

\subsection{Quasi-infinite forests $\mathcal{F}_P$}\label{subsec:quasi_forest}

    Now we introduce the \textit{quasi-infinite forest} for certain directed graphs {\it (digraphs)}, such as the opposites of the functional digraphs for Bulgarian solitaire  orbits. Recall that a {\it functional digraph} for a function $f:V \rightarrow V$ on a set
    $V$ has arcs $v \rightarrow f(v)$
    for each $v$ in $V$. Functional digraphs are the same as digraphs
    in which every vertex $v$ has outdegree one; this allows self-loops and directed $2$-cycles, but parallel arcs would violate the outdegree one condition.
    
    \begin{definition}
        Let $D = (V,A)$ be the {\it opposite} digraph of a  functional digraph, that is a digraph in which every vertex has in-degree one. Let $C$ be the largest subset of $V$ such that the induced digraph $D|_C$ is a permutation. Let us call $C$ the \textit{recurrent set} of $D$. 
    
    Define the quasi-infinite forest $\mathcal{F}_D$ to be the digraph in which the vertices are directed paths $p=(v_0\rightarrow v_1\rightarrow \ldots\rightarrow v_i)$ in $D$ such that $v_0 \in C$, and there is an arc  $p \rightarrow p'$ in $\mathcal{F}_D$ whenever $p, p'$ are related as follows: 
    \begin{equation}
        \label{typical-forest-arc}
    \begin{aligned}        
    p &= (v_0\rightarrow v_1\rightarrow \ldots\rightarrow v_i),\\
    p' &=(v_0\rightarrow v_1\rightarrow \ldots\rightarrow v_i\rightarrow v_{i+1}).
    \end{aligned}
    \end{equation}
     \end{definition}
     Figure \ref{fig:forest_ex} shows an example of a digraph $D$ and the corresponding forest $\mathcal{F}_D$. It is easy to see that $\mathcal{F}_D$ consists of $|C|$ trees rooted at the vertices in $C$. 
     
      \begin{figure}[h!]
        \centering
        \includegraphics{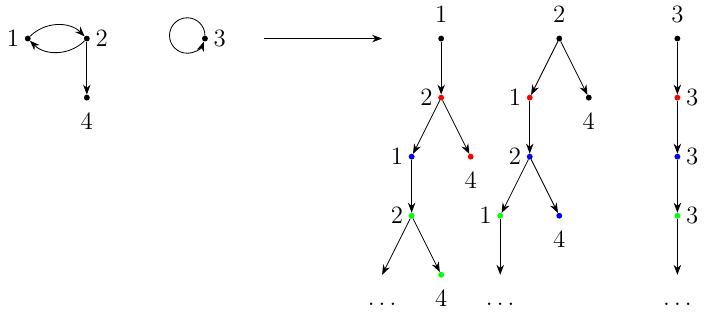}
        \caption{Digraph $D$ (left) and the corresponding quasi-infinite forest $\mathcal{F}_D$ (right)}
        \label{fig:forest_ex}
    \end{figure}

     We wish to relate two generating functions, one for the digraph $D$ and one for its quasi-infinite forest $\mathcal{F}_D$. The {\it level generating function} for $D$ is defined by 
     $$
     h(x):=\sum_{v \in V} x^{\text{level}(v)}
     $$
     where $\text{level}(v)=\min\{k : f^k(v) \in C\}$
     for the function $f:V \rightarrow V$ whose functional digraph is opposite to $D$. Letting $\ell(p):=i$ for $p=(v_0 \rightarrow v_1 \rightarrow \cdots \rightarrow v_i)$ The {\it path-length generating function} for $\mathcal{F}_D$ is defined by
     $$
     g(x):=\sum_{p} x^{\ell(p)}.
     $$
    
    \begin{lemma}\label{lem:func-graph-gf}
        For any digraph $D$ opposite to a functional digraph, one has 
        $$
        h(x) = (1-x)g(x).
        $$
    \end{lemma}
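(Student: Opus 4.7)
My plan is to reorganize the sum defining $g(x)$ by the endpoint of each forest path and then recognize the resulting inner sum as a geometric series. The pivotal observation I would establish first is that because $D$ is opposite to the functional digraph of $f$, every vertex of $D$ has in-degree one, with unique in-neighbor $f(v)$. Consequently any directed path $p = (v_0 \rightarrow v_1 \rightarrow \cdots \rightarrow v_i)$ in $D$ is rigid: it is completely determined by its endpoint $v_i$ and its length $i$, via the backward iteration $v_{i-j} = f^j(v_i)$. Thus the map $p \mapsto (v_i, i)$ identifies the vertex set of $\mathcal{F}_D$ with the set of pairs $(v,i) \in V \times \mathbb{Z}_{\geq 0}$ for which the forest root condition $f^i(v) \in C$ holds.

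The second ingredient I would verify is that $C$ is closed under $f$: for any $c \in C$ its unique in-neighbor in $D$ is $f(c)$, and since $D|_C$ is by definition a permutation, this in-neighbor must already lie in $C$. With $f(C) \subseteq C$, the condition $f^i(v) \in C$ becomes monotone in $i$; it first holds at $i = \text{level}(v)$ and persists for every larger $i$. Therefore, for each fixed $v \in V$, the valid forest path lengths ending at $v$ form exactly the upward-closed tail $\{\text{level}(v), \text{level}(v)+1, \ldots\}$.

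Putting these two ingredients together, sum by endpoint and collapse the geometric series:
\[
g(x) = \sum_{v \in V} \sum_{i \geq \text{level}(v)} x^i = \sum_{v \in V} \frac{x^{\text{level}(v)}}{1-x} = \frac{h(x)}{1-x},
\]
and clearing $1-x$ yields the claim. There is no serious obstacle here; the proof is essentially a rigidity argument (paths are determined by their endpoints because in-degrees in $D$ are all one) followed by a uniform geometric-series collapse (made possible by $f(C) \subseteq C$). The only place needing mild care is confirming that these two facts together produce an upward-closed tail of valid lengths for every endpoint, which is what lets the factor $1/(1-x)$ be pulled out uniformly.
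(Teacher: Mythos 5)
Your proof is correct. It is essentially the same identity as the paper's, but organized dually: the paper decomposes the forest as $\mathcal{F}_D=\mathcal{F}_0\sqcup\mathcal{F}_1\sqcup\cdots$ according to the number of initial permutational steps and exhibits, for each $k$, a bijection $\mathcal{F}_k\to V$ sending a path to its endpoint with $\ell(p)=k+\mathrm{level}(v_i)$, which yields $g(x)=\sum_k x^k h(x)$; you instead fiber the vertices of $\mathcal{F}_D$ over their endpoints and collapse a geometric series in each fiber, which is the same double sum with the order of summation exchanged (your summation index $i$ corresponds to the paper's $k+\mathrm{level}(v)$). What your route buys is that the two facts the paper leaves as ``one then checks'' become explicit and cleanly separated: the rigidity of paths (each path in $D$ is determined by its endpoint and its length, because every vertex has in-degree one, so forest vertices are exactly the pairs $(v,i)$ with $f^i(v)\in C$), and the closure $f(C)\subseteq C$, which is precisely what makes the set of admissible lengths at a fixed endpoint an upward-closed tail starting at $\mathrm{level}(v)$. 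The paper's permutational-prefix factorization secretly relies on the same closure property (once a path leaves $C$ it cannot return), so the two arguments rest on identical foundations; yours is marginally more self-contained, while the paper's layer-by-layer bijection makes the coefficient-level statement $\sum_{p\in\mathcal{F}_k}x^{\ell(p)}=x^kh(x)$ visible, which is the form used later when interpreting subtree decompositions.
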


    \begin{proof}
        It is equivalent to show that
        $$
        \begin{aligned}
        g(x) &= (1+x+x^2+x^3+\cdots)h(x)\\
        &=h(x) + xh(x) +x^2h(x) + x^3h(x) \cdots
        \end{aligned}
        $$
        One can interpret each term $x^k h(x)$ on the last line as follows. Call an arc $v \rightarrow v'$ in $D$ {\it permutational} if both $v,v'$ lie in $C$ and $f(v')=v$, and {\it non-permutational} otherwise.
        It is not hard see that every path $p=(v_0 \rightarrow v_1 \rightarrow \cdots \rightarrow v_i)$ indexing a vertex in $\mathcal{F}_D$ starts with a (possibly empty) sequence of all permutational steps $v_0 \rightarrow v_1 \rightarrow \cdots \rightarrow v_k$, followed by a (possibly empty) sequence of steps $v_k \rightarrow v_{k+1} \rightarrow \cdots \rightarrow v_i$ which are all non-permutational; the index $k$ is therefore uniquely determined. This lets one decompose $\mathcal{F}_D$ into vertex subsets 
        $$
        \mathcal{F}_D = \mathcal{F}_0 \sqcup \mathcal{F}_1 \sqcup \mathcal{F}_2 \sqcup \mathcal{F}_3\sqcup \cdots
        $$ 
        where $\mathcal{F}_k$ are the vertices whose corresponding path starts with $k$ permutational steps. In Figure~\ref{fig:forest_ex}, the sets $\mathcal{F}_k$ for $k=0,1,2,3$ are colored black, red, blue, green, respectively. One then checks that, for each $k=0,1,2,\ldots$, the map sending 
        $p=(v_0 \rightarrow v_1 \rightarrow \cdots \rightarrow v_i)$ to $v_i$
        restricts to a bijection 
        $\phi_k: \mathcal{F}_k \longrightarrow V$ satisfying
        $\ell(p)=k+ \text{level}(v_i)$. Consequently,
         $$
         \sum_{p \in \mathcal{F}_k} x^{\ell(p)}= x^k h(x).\qedhere
         $$
    \end{proof}
    
    Let us recall the example at the beginning of Section \ref{subsec:BSlimit} with the primitive necklace $P=BWW$ and its powers $P^1,P^2,P^3,\cdots$. Figures \ref{fig:forest_1}, \ref{fig:forest_2}, and \ref{fig:forest_3} show the digraphs for $\mathcal{O}_{(BWW)^1},\mathcal{O}_{(BWW)^2}, \mathcal{O}_{(BWW)^3}$. Recall that each orbit $\mathcal{O}_{(BWW)^\ell}$ has a recurrent set $\{(BWW)^\ell,(WBW)^\ell,(WWB)^\ell\}$. Hence, each orbit $\mathcal{O}_{(BWW)^\ell}$ corresponds to a quasi-infinite forest $\mathcal{F}_{(BWW)^\ell}$, which is a disjoint union of three trees $\mathcal{T}_{(BWW)^\ell}$, $\mathcal{T}_{(WBW)^\ell}$, $\mathcal{T}_{(WWB)^\ell}$ rooted at $(BWW)^\ell$, $(WBW)^\ell$, $(WWB)^\ell$, respectively. Since $\mathcal{O}_{(BWW)^\ell}$ converges to a digraph $\mathcal{O}_{(BWW)^\infty} = \lim_{\ell\rightarrow\infty}\mathcal{O}_{(BWW)^\ell}$, the corresponding quasi-infinite forests $\mathcal{F}_{(BWW)^\ell}$ also converge to a quasi-infinite forest $\mathcal{F}_{BWW} = \lim_{\ell\rightarrow\infty}\mathcal{F}_{(BWW)^{\ell}}$. The recurrent set of $\{C_1,C_2,C_3\}$ of $\mathcal{O}_{(BWW)^\infty}$ is still in bijection with $\{BWW,WBW,BWB\}$,
    and hence $\mathcal{F}_{BWW}$ is a disjoint union of three trees $\mathcal{T}_{C_1},\mathcal{T}_{C_2},\mathcal{T}_{C_3}$ rooted at $C_1,C_2,C_3$ as shown in Figure \ref{fig:forest_infty}.
    
    \begin{figure}[h!]
        \centering
        \includegraphics[scale = 0.3]{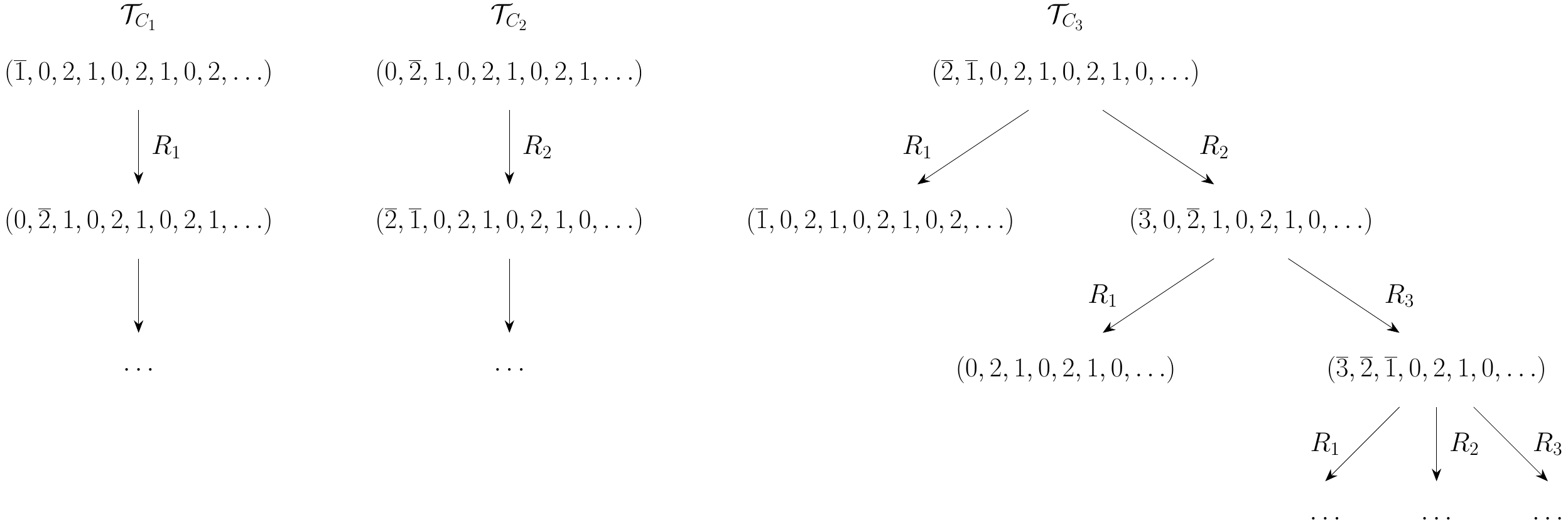}
        \caption{Part of the quasi-infinite forest $\mathcal{F}_{BWW}$}
        \label{fig:forest_infty}
    \end{figure} 
     
    Also, recall that Pham generalized this idea to every primitive necklace $P$ of length $|P|=n$. 
    She showed in \cite{pham2022limiting} that for the game digraphs $\mathcal{O}_{P^\ell}$, 
    the bijections of their recurrent sets $\{C^{(\ell)}_1,C^{(\ell)}_2,\ldots,C^{(\ell)}_n\}$
    extend to digraph inclusions
    $$
     \mathcal{O}_{P} \hookrightarrow \mathcal{O}_{P^2} \hookrightarrow \mathcal{O}_{P^3} \hookrightarrow \cdots
    $$
    which converge to a digraph $\mathcal{O}_{P^\infty}$ with recurrent set $\{C_1,C_2,\ldots,C_n\}$. Hence, the corresponding quasi-infinite forests $\mathcal{F}_{P^\ell}$ also converge to a quasi-infinite forest $\mathcal{F}_{P}$, having $n$ trees $\mathcal{T}_{C_1},\mathcal{T}_{C_2},\ldots,\mathcal{T}_{C_1}$ rooted at $C_1,C_2,\ldots,C_n$.
    In other words,  
\begin{equation}
    \label{forest-is-union-of-trees}
    \mathcal{F}_P = \bigsqcup_{i=1}^n \mathcal{T}_{C_i}. 
\end{equation}

    We can also generalize the concept of quasi-infinite trees $\mathcal{T}_{C_i}$ to trees $\mathcal{T}_\mu$ rooted at any element $\mu$ in $\BSlimit$. Observe that we can pick any element $\mu$, not necessarily a recurrent cycle element, and start playing reversed BS from $\mu$. Let $\mathcal{O}_\mu$ be the set of elements reachable from $\mu$ after a sequence of reversed BS moves. When $\mu$ is not a recurrent cycle element, then for every element $\nu$ in $\mathcal{O}_\mu$, there is a unique sequence of $i$ moves $R_{j_1},\ldots,R_{j_i}$ such that $\nu = R_{j_i}\circ\ldots\circ R_{j_1}(\mu)$. Thus, we can associate each element $\nu$ in $\mathcal{O}_\mu$ with a ``level'' $i$. Hence, we can define the tree rooted at $\mu$ and the level generating function of this tree as follows.

    \begin{definition}\label{def:subtree}
        For any element $\mu$ in $\BSlimit$, denote by $\mathcal{T}_\mu$ the {\it tree rooted at $\mu$} whose vertices are indexed by elements in $\mathcal{O}_\mu$, and there is a direct edge $\nu\rightarrow\nu'$ if $\nu' = R_j(\nu)$ for some $j$. In this case, for each $\nu$ in $\mathcal{O}_\mu$, we let $\ell(\mu,\nu)$
        denote the number of steps in the path from $\mu$ to $\nu$, and define the level generating function for $\mathcal{T}_\mu$ to be 
        $$
        g_\mu=g_\mu(x):=\sum_\nu x^{\ell(\mu,\nu)}
        $$
        where the sum runs over all such vertices $\nu$ of $\mathcal{T}_\mu$.
    \end{definition}

    For example, Figure \ref{fig:T-mu-ex} shows the tree $\mathcal{T}_\mu$ rooted at the element $\mu = (\overline{1}, \overline{2}, 1, 1, \ldots)$. The level generating function of $\mathcal{T}_\mu$ is $g_\mu(x) = 1+2x+3x^2+\ldots$.

    \begin{figure}[h!]
        \centering
        \includegraphics[scale = 0.5]{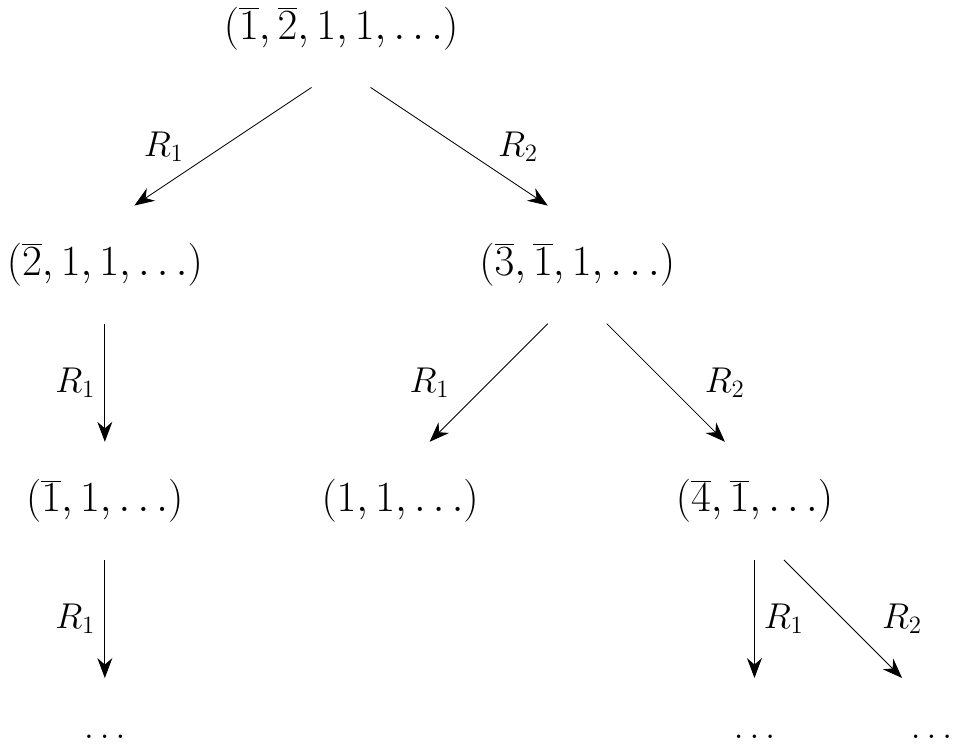}
        \caption{$\mathcal{T}_{(\overline{1}, \overline{2}, 1, 1, \ldots)}$}
        \label{fig:T-mu-ex}
    \end{figure}

    An important idea is comparing subtrees rooted at different vertices of the forest $\mathcal{F}_P$. In the special case where $\mu=C_i$ is one of the roots of the forest $\mathcal{F}_P$, so that $\mathcal{T}_\mu=\mathcal{T}_{C_i}$ is one of the trees in the forest, denote $g_\mu(x)$  by $g_i=g_i(x)=g_{C_i}(x)$. Thus, \eqref{forest-is-union-of-trees} shows that
    $$
    g(x)=\sum_{i=1}^n g_i(x)=g_1(x) + \cdots + g_n(x)
    $$
    and the work of  \cite{pham2022limiting} (or Lemma~\ref{lem:func-graph-gf} above) shows that
    \[ H_P(x) = (1-x)g(x). \]
   Thus, the key step in studying $H_P(x)$ is to understand the generating functions 
   $\{ g_i(x)\}_{i=1}^n$. Our strategy to prove Theorem~\ref{thm:BWW_WBB} is to relate the $\{ g_i(x)\}_{i=1}^n$ via a linear system of equations. Our strategy to prove Theorem~\ref{thm:BWB_WBW} uses the following notion.
   \begin{definition}
   \label{subtree-isomorphism}
       Say that two quasi-infinite trees $\mathcal{T}_\mu$ and $\mathcal{T}_\nu$ are \textit{isomorphic} if there is a bijection $f$ between their vertices that respects the reverse BS moves, i.e., one has $\rho' = R_i(\rho)$ in $\mathcal{T}_\mu$ if and only if $f(\rho') = R_i(f(\rho))$ in $\mathcal{T}_\nu$.
    \end{definition}

\section{Fuses and pre-fuses}\label{sec:fuse}

\subsection{$k$-fuses}\label{subsec:k-fuse}

    In this section, we introduce the concept of \textit{$k$-fuse} that shows up in almost every quasi-infinite forest. In general, we say an element $\mu=(\mu_1,\ldots,\mu_k,\mu_{k+1},\ldots)$ in $\BSlimit$ contains a $k$-fuse if its first $k$ parts $\mu_1,\ldots,\mu_k$ satisfy the conditions in Definition \ref{def:k-fuse} below. We then view $\mu$ as $(\mu_1,\ldots,\mu_k,\nu)$ where $\nu$ is some other element of $\BSlimit$, i.e. $\mu$ is $\nu$ following a prefix $(\mu_1,\ldots,\mu_k)$. We will eventually show that regardless of the exact values of $\mu_1,\ldots,\mu_k$, as long as they satisfy the conditions in Definition \ref{def:k-fuse}, we have 
    \begin{equation}
    \label{desired-equation}
    g_{\mu}(x) = u_k(x) \cdot g_{\nu}(x),
    \end{equation}
    where $u_k(x)$ only depends on $k$ and does not depend on $\mu$ or $\nu$ or the exact values of $\mu_1,\ldots,\mu_k$. Then we will combinatorially interpret the coefficients of $u_k(x)$.

    \begin{definition}\label{def:k-fuse}
        We say $(\mu_1,\ldots,\mu_k)$ is a \textbf{$k$-fuse} if 

        \begin{enumerate}
            \item $\mu_1,\mu_2,\ldots,\mu_{k-1}$ are either $1$ or $2$, but $\mu_k \geq 3$,
            \item all parts $\mu_1,\mu_2,\ldots,\mu_k$ are playable, and
            \item for $1 \leq j\leq k-1$, if $\mu_j=1$ then  $\mu_{j+1} \neq 1$, i.e. there is no two consecutive ones.
        \end{enumerate}

        If $\mu = (\mu_1,\ldots,\mu_k,\ldots)$, that is, the first $k$ parts of $\mu$ are $\mu_1,\ldots,\mu_k$, and $(\mu_1,\ldots,\mu_k)$ is a $k$-fuse, we say that $\mu$ contains a $k$-fuse.
    \end{definition}

    \begin{example}
        Both $\mu=(\bar{2},\bar{1},\bar{3},\bar{1},\bar{2},\ldots)$ and $\nu=(\bar{1},\bar{2},\bar{3},\bar{2},\bar{2},\ldots)$
        contain $3$-fuses.
    \end{example}

    \begin{remark}\label{rem:k-fuse}
        We explain here why we call $(\mu_1,\ldots,\mu_k)$ a ``$k$-fuse".  First of all, once we play any of the first $k$ parts, all parts after $\mu_k$, i.e. $\mu_{k+1},\mu_{k+2},\ldots$, are no longer playable because $\mu_k\geq 3$. This is because Lemma \ref{lem:bs_move_lim} says that if we let $\mu' = R_j(\mu)$, then for $i\geq j$, $\mu'_i$ is playable only if $\sum_{s = j}^i \mu_i < 3$. Having $\mu_k\geq 3$ immediately violates this condition. As a result, once we play any of the first $k$ parts, only the first $k$ parts are playable. In addition, Proposition \ref{prop:k-fuse-len} will show that the reversed BS game will terminate after at most $k$ moves. For example, figure \ref{fig:sep-iso-ex} shows two different elements of $\BSlimit$ that both contain a $3$-fuse. Although the $3$-fuses are different, the branches after playing $R_1$, $R_2$, or $R_3$ are isomorphic. Furthermore, for every element in these branches, the playable parts are only those initially in the $3$-fuses, and after at most $3$ steps, there is no more playable parts. We can think of a $k$-fuse as a fuse of a ``bomb''. Once we ``trigger'' the fuse by playing one of the first $k$ parts, there is nothing we can do except continue ``burning'' the fuse. Eventually, after at most $k$ moves, the bomb ``explodes'' and there is no more possible move.

        \begin{figure}[h!]
    		
         \centering
            \begin{subfigure}[b]{0.45\textwidth}\label{subfig:3-sep-1}
                \centering
        
                \includegraphics[width = \textwidth]{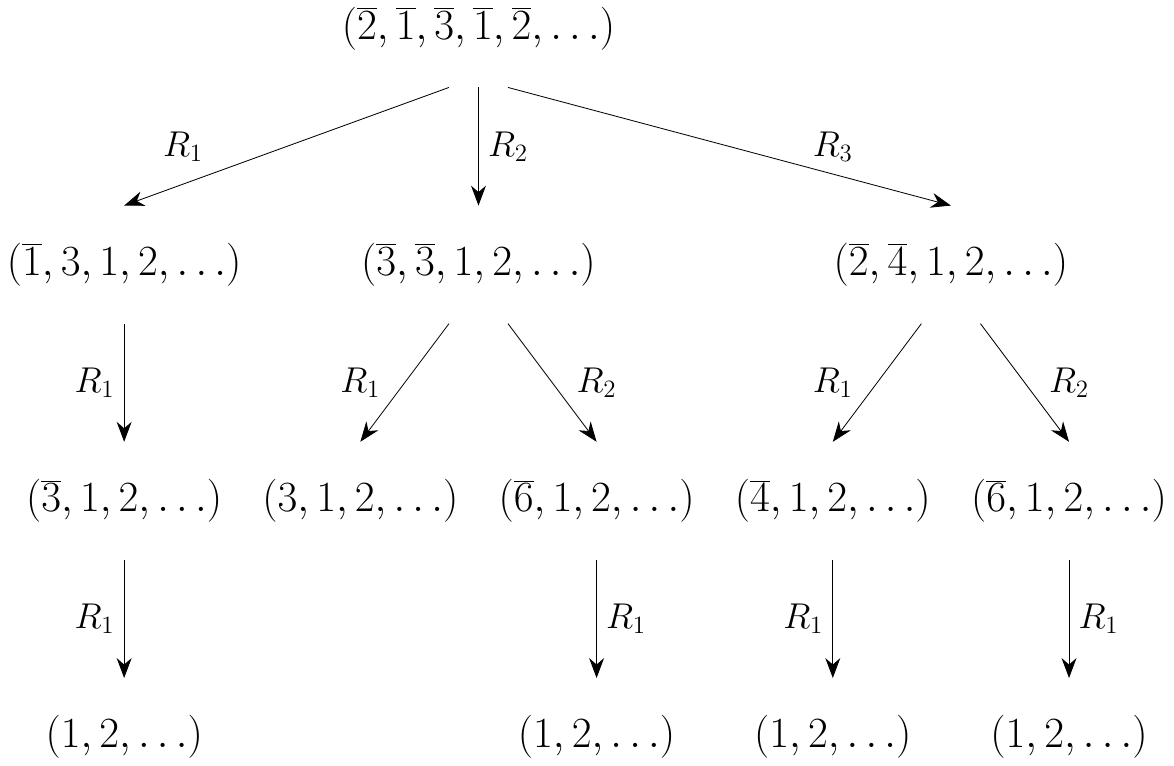}
            \end{subfigure}
         \quad\quad
            \begin{subfigure}[b]{0.45\textwidth}\label{subfig:3-sep-2}
                \centering
        
                \includegraphics[width = \textwidth]{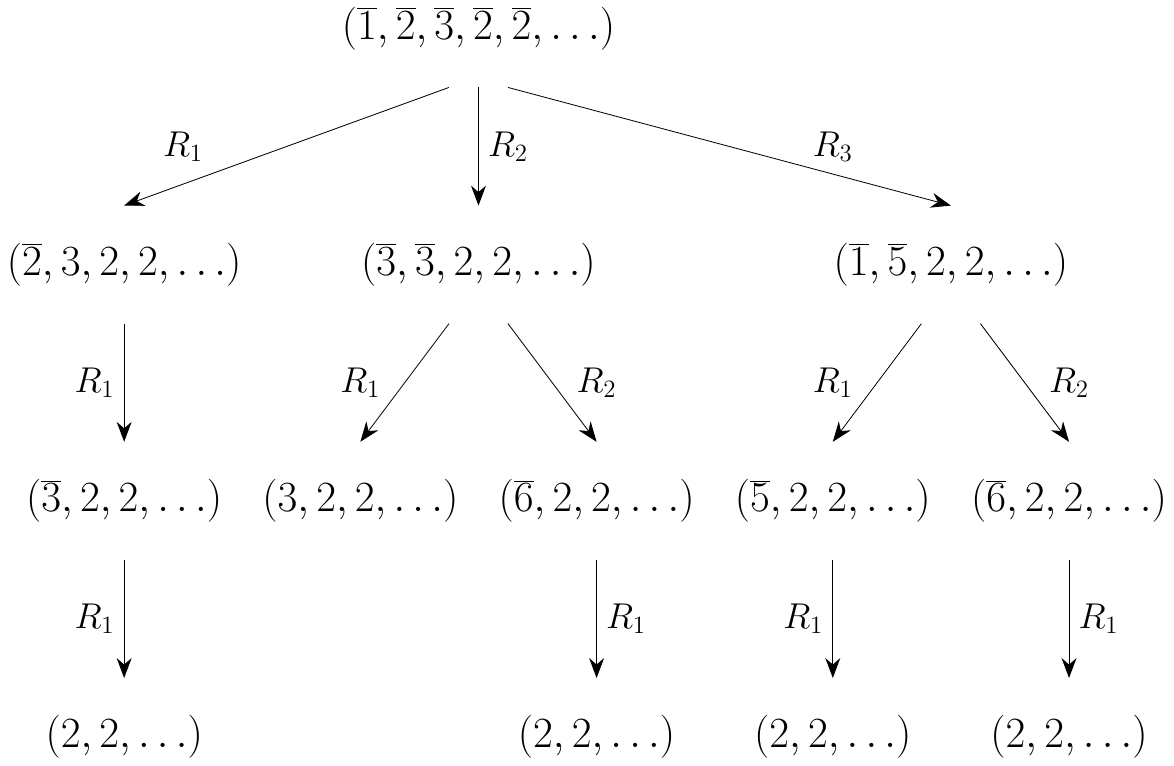}
            \end{subfigure}
            
            \caption{3-fuses}
            \label{fig:sep-iso-ex}
            
        \end{figure}
    \end{remark}

    Let us now make some of the earlier comments about
    \eqref{desired-equation} more precise. Given a subset $A\subseteq \{1,2,3,\ldots\}$, define
    \[ R_A(\mu):=\{\rho=(R_{j_1}\circ R_{j_2} \circ \cdots R_{j_k})(\mu)\text{ for some }j_1,j_2,\ldots,j_k \in A\}. \]
    Given $\mu=(\mu_1,\ldots,\mu_k,\nu)$ starting with a $k$-fuse, let $V:=R_{ \{k+1,k+2,\ldots\} }(\mu)$. Corollary \ref{cor:all-has-fuse} below will show that every element in $V$ also has a $k$-fuse. By Remark \ref{rem:k-fuse} and Proposition \ref{prop:k-fuse-len}, the tree $\mathcal{T}_\mu$ has a disjoint decomposition
    \[ \mathcal{T}_\mu = \bigsqcup_{\rho \in V} R_{ \{ 1,2,\ldots,k \} }(\rho). \]
    Figure \ref{subfig:u_2_sep} shows the disjoint decomposition of the tree rooted at $\mu = (\overline{1},\overline{3},\overline{1},\overline{2},1,1,\ldots)$ with a $2$-fuse. Each component $R_{ \{ 1,2,\ldots,k \} }(\rho)$ is illustrated via the color-coding.

    \begin{figure}[h!]
        
     \centering
        \begin{subfigure}[b]{0.4\textwidth}
            \centering
    
            \includegraphics[width = \textwidth]{fig/u_2_core.pdf}
            \caption{}
            \label{subfig:u_2_core}
        \end{subfigure}
     \quad\quad
        \begin{subfigure}[b]{0.5\textwidth}
            \centering
    
            \includegraphics[width = \textwidth]{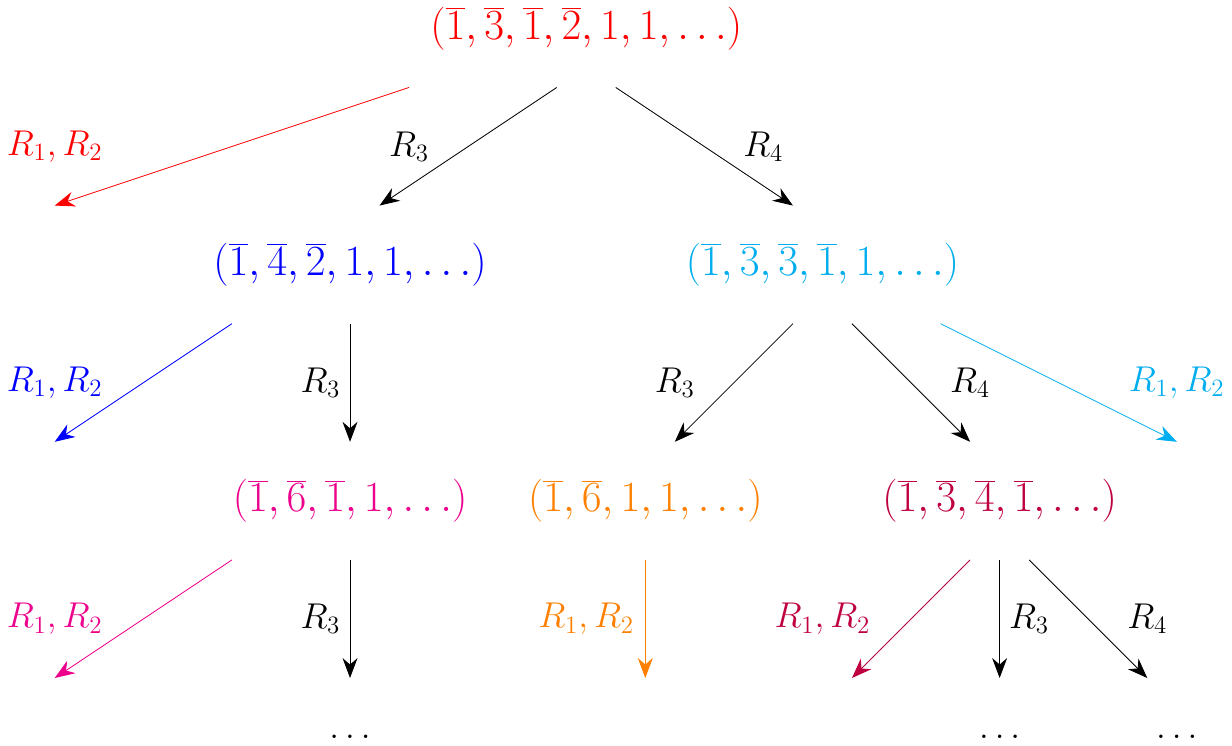}
            \caption{}
            \label{subfig:u_2_sep}
        \end{subfigure}
        
        \caption{}
        \label{fig:u_2-ex}
        
    \end{figure}

    Because of the disjoint decomposition, one can write
    \[ g_\mu(x) = \sum_{\rho \in V} x^{\ell(\mu,\rho)} u_\rho(x) \]
    where
    \[ u_\rho(x) = \sum_{\sigma \in R_{ \{ 1,2,\ldots,k \} (\rho)}} x^{\ell(\rho,\sigma)}. \]
    Proposition~\ref{prop:iso-k-branches} below shows that all of the subtrees $R_{ \{ 1,2,\ldots,k \} }(\rho)$ are isomorphic.
    Hence one can define a single (polynomial) generating function $u_k(x):=u_\rho(x)$ for all $\rho \in V$, to be studied further in Section~\ref{subsec:formula-u-k} below.
    Consequently,
 \[
 g_\mu(x) = u_k(x) \cdot \sum_{\rho \in V} x^{\ell(\mu,\rho)}.
 \]
 Our next proposition shows $\mathcal{T}_\mu |_V$ is isomorphic to $\mathcal{T}_\nu$, which will eventually imply \eqref{desired-equation}.
 
    \begin{prop}\label{prop:k-sep-isomorphic}
        Let $\mu = (\overline{\mu_1},\ldots,\overline{\mu_k},\nu)$ where $(\overline{\mu_1},\ldots,\overline{\mu_k})$ is a $k$-fuse. Then the vertex set $V := R_{\{k+1,k+2,\ldots\}]}(\mu)$ has $\mathcal{T}_\mu |_V$ isomorphic to $\mathcal{T}_\nu$.
    \end{prop}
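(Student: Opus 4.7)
The plan is to construct an explicit isomorphism by truncating away the $k$-fuse prefix. Define
\[
\phi\colon V \longrightarrow \mathcal{O}_\nu, \qquad \phi(\rho) := (\rho_{k+1},\rho_{k+2},\ldots),
\]
where each entry $\rho_{k+i}$ inherits the bar it carries in $\rho$. I would prove the proposition by showing (a) $\phi(\mu)=\nu$, (b) $\phi$ is compatible with reverse BS moves via the shift $j\mapsto j-k$, and (c) $\phi$ is a bijection.

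The base case $\phi(\mu)=\nu$ is immediate from the decomposition $\mu=(\mu_1,\ldots,\mu_k,\nu)$. The heart of the argument is the inductive step: for any $\rho\in V$ and any $j\geq k+1$ such that the $j$-th part is playable in $\rho$, I would verify that position $j-k$ is playable in $\phi(\rho)$ and
\[
\phi(R_j(\rho)) = R_{j-k}(\phi(\rho)).
\]
Using Lemma~\ref{lem:bs_move_lim}, this splits into two cases. If $j=k+1$, then $R_{k+1}$ absorbs $\rho_{k+1}$ into $\rho_k$ (keeping position $k$'s entry at least $3$ and hence still playable in the fuse sense) and shifts all later entries left by one; truncating by $\phi$ gives exactly $R_1(\phi(\rho))$. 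If $j\geq k+2$, then positions $1,\ldots,j-2$ (which include the entire fuse prefix) are unchanged, positions $j-1$ and $j$ are combined into the new $(j-1)$-th entry, and later entries shift left by one; truncating gives $R_{j-k}(\phi(\rho))$.

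The bar data also match under $\phi$ at each step because the bar rule in Lemma~\ref{lem:bs_move_lim} is index-local: bars on positions $<j$ are just nonzero-or-not, while bars on positions $i\geq j$ use only the partial sum $\sum_{r=j}^i \rho_r$, which equals $\sum_{r=j-k}^{i-k}\phi(\rho)_r$ by construction. For the converse direction, any element $\sigma\in\mathcal{O}_\nu$ is reached by a unique sequence $R_{i_m}\circ\cdots\circ R_{i_1}$ from $\nu$ (uniqueness because $\mathcal{T}_\nu$ is a tree; cf.\ Definition~\ref{def:subtree}); lifting to $R_{i_m+k}\circ\cdots\circ R_{i_1+k}$ applied to $\mu$ yields an element of $V$ mapping to $\sigma$ under $\phi$, since these shifted moves only touch positions $\geq k+1$. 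This exhibits $\phi$ as a vertex bijection intertwining the edges of $\mathcal{T}_\mu|_V$ and $\mathcal{T}_\nu$.

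The main technical obstacle is the careful bookkeeping on bars: one must verify at every stage that the barred positions of $\rho\in V$ with indices $\geq k+1$ correspond exactly, under $j\mapsto j-k$, to the barred positions of $\phi(\rho)$. I would handle this by induction on the path length from $\mu$, matching the two conditions in the final sentence of Lemma~\ref{lem:bs_move_lim}; the key identity $\sum_{r=j}^{k+i}\rho_r=\sum_{r=j-k}^{i}\phi(\rho)_r$ makes the ``right-of-$j$'' partial-sum conditions transparent, while the ``left-of-$j$'' nonzero conditions transfer automatically.
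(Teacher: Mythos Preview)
Your proof is correct and follows essentially the same approach as the paper's: both argue by induction on path length, split into the cases $j=k+1$ versus $j\geq k+2$ (the paper's $i_{j+1}=1$ versus $i_{j+1}>1$), and verify that the fuse prefix persists while the tail transforms exactly as in $\mathcal{T}_\nu$. The paper records the slightly stronger invariant that every element of $V$ has the form $(\overline{\mu_1},\ldots,\overline{\mu_{k-1}},\overline{\mu_k'},\rho)$ with only the $k$-th fuse entry possibly growing, which you have implicitly in your case analysis.
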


    \begin{proof}
        We will show that a sequence of moves $R_{i_1},R_{i_2},\ldots,R_{i_j}$ is possible from $\nu$ if and only if the sequence of moves $R_{k+i_1},R_{k+i_2},\ldots,R_{k+i_j}$ is possible from $\mu$. In fact, we will prove a slightly stronger statement: $\rho := R_{i_j}\circ\ldots\circ R_{i_1}(\nu)$ exists if and only if $R_{k+i_j}\circ\ldots\circ R_{k+i_1}(\mu)$ also exists, and $R_{k+i_j}\circ\ldots\circ R_{k+i_1}(\mu) = (\overline{\mu_1},\ldots,\overline{\mu_{k-1}},\overline{\mu_k'},\rho)$ where $(\overline{\mu_1},\ldots,\overline{\mu_{k-1}},\overline{\mu_k'})$ is a $k$-fuse. We will prove this by induction on $j$. The base case where $j = 0$ is obvious. 
        
        Suppose $\rho := R_{i_j}\circ\ldots\circ R_{i_1}(\nu)$ exists and $\sigma := R_{k+i_j}\circ\ldots\circ R_{k+i_1}(\mu) = (\overline{\mu_1},\ldots,\overline{\mu_{k-1}},\overline{\mu_k'},\rho)$. Suppose $\rho' = R_{i_{j+1}}(\rho)$ exists for some $i_{j+1}>1$, then since $i_{j+1}>1$, this move only affects and depends on parts $i_{j+1}-1,i_{j+1},i_{j+1}+1 \ldots$ in $\rho$. These parts are identical to parts $k+i_{j+1}-1,k+i_{j+1},k+i_{j+1}+1 \ldots$ in $\sigma$. Thus, $\sigma' = R_{k+i_{j+1}}(\sigma)$ exists, and $\sigma' = (\overline{\mu_1},\ldots,\overline{\mu_{k-1}},\overline{\mu_k'},\rho')$.

        Finally, suppose $\rho' = R_{1}(\rho)$ exists. By Lemma \ref{lem:bs_move_result}, $\rho'$ is obtained by removing the first part of $\rho$ and putting the bars on the remaining parts following the rules in Lemma \ref{lem:bs_move_result}. On the other hand, $\sigma' = R_{k+1}(\sigma)$ is obtained from $\sigma$ by adding $\rho_1$ to $\mu_k'$ and putting the bars on the remaining parts also following the rules in Lemma \ref{lem:bs_move_result}. Again, parts $1,2,\ldots$ of $\rho$ are the same as parts $k+1,k+2,\ldots$ of $\sigma$, so the extra bars are put on respective parts. Thus, $\sigma' = (\overline{\mu_1},\ldots,\overline{\mu_{k-1}},\overline{\mu_k''},\rho')$ where $\mu_k'' = \mu_k' + \rho_1$. Note that this does not violate the conditions of $k$-fuses, i.e. $(\overline{\mu_1},\ldots,\overline{\mu_{k-1}},\overline{\mu_k''})$ is still a $k$-fuse.

        The argument for the converse is exactly the same.
    \end{proof}

    The following corollary is immediate from the proof of Proposition \ref{prop:k-sep-isomorphic}.

    \begin{cor}\label{cor:all-has-fuse}
        There is an isomorphism from $\mathcal{T}_\nu$ to $\mathcal{T}_\mu |_V$ that maps every element $\rho\in \mathcal{T}_\nu$ to an element $(\overline{\mu_1},\ldots,\overline{\mu_{k-1}},\overline{\mu_k'},\rho)\in \mathcal{T}_\mu |_V$ where $(\overline{\mu_1},\ldots,\overline{\mu_{k-1}},\overline{\mu_k'})$ is a $k$-fuse. In particular, every element in $\mathcal{T}_\mu |_V$ has a $k$-fuse.
    \end{cor}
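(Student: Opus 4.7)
The plan is to recognize that the inductive argument inside the proof of Proposition~\ref{prop:k-sep-isomorphic} already constructs the map that is needed; the corollary amounts to naming this map and reading off its structure. Concretely, I would define $f\colon \mathcal{T}_\nu \to \mathcal{T}_\mu|_V$ by $f(\nu) = \mu$ and, inductively, by $f\bigl(R_{i}(\rho)\bigr) = R_{k+i}\bigl(f(\rho)\bigr)$ for every edge $\rho \to R_i(\rho)$ in $\mathcal{T}_\nu$. Since $\mathcal{T}_\nu$ is a tree, each vertex has a unique path from the root $\nu$, so this recursion defines $f$ unambiguously.

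Three things then need to be verified, and all are essentially immediate from what was proved inductively in Proposition~\ref{prop:k-sep-isomorphic}: (i) $f$ actually lands in $\mathcal{T}_\mu|_V$, which holds because every move used in forming the image has index $\geq k+1$; (ii) $f$ respects reverse BS moves and is a bijection onto $\mathcal{T}_\mu|_V$, which is literally the biconditional established by the induction; and (iii) for every $\rho$ the image has the form $(\overline{\mu_1},\ldots,\overline{\mu_{k-1}},\overline{\mu_k'},\rho)$ with the first $k$ entries forming a $k$-fuse, which is also part of the induction's statement.

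The mild point to confirm, and perhaps the only real obstacle, is that the $k$-fuse prefix condition really is preserved along every branch of the construction. The first $k-1$ entries never change, so the ``no two consecutive ones'' requirement is automatic. The value $\mu_k'$ can only grow, since each downstream $R_1$ adds $\rho_1 \geq 0$ to it per Lemma~\ref{lem:bs_move_lim}, so the inequality $\mu_k' \geq 3$ persists. Finally, the bars on $\mu_1,\ldots,\mu_k'$ persist because the tail-sum test in Lemma~\ref{lem:valid-mu} only becomes easier to satisfy as further nonnegative entries appear to the right. Once $f$ is seen to be a bijection onto $\mathcal{T}_\mu|_V$ whose images all carry a $k$-fuse prefix, the ``in particular'' statement that every element of $\mathcal{T}_\mu|_V$ contains a $k$-fuse follows at once.
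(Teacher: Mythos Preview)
Your proposal is correct and takes essentially the same approach as the paper: the paper simply states that the corollary is immediate from the proof of Proposition~\ref{prop:k-sep-isomorphic}, and you have spelled out exactly how to extract the isomorphism and the $k$-fuse prefix from that inductive argument. One minor point: since we are working in $\BSlimit$, the bar persistence on the fuse prefix is more directly justified by the bar rule in Lemma~\ref{lem:bs_move_lim} (after playing $R_{k+i}$, every nonzero entry in position $\leq k+i-1$ receives a bar) than by the finite tail-sum test of Lemma~\ref{lem:valid-mu}.
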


    Now we prove that $u_\rho(x)$ are the same for all $\rho\in V$.
    
    \begin{prop}\label{prop:iso-k-branches}
        Let $\rho = (\overline{\rho_1},\ldots,\overline{\rho_k},\nu)$ where $(\rho_1,\ldots,\rho_k)$ is a $k$-fuse. Then for all values of $\rho_1,\ldots,\rho_k$ (satisfying the conditions of $k$-fuses) and for all $\nu$, the subtrees $R_{\{1,2,\ldots,k\}}(\rho)$ are isomorphic.
    \end{prop}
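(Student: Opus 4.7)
The plan is to induct on $k$. The base case $k=1$ is immediate: from $\rho=(\overline{\rho_1},\nu)$ with $\rho_1 \geq 3$, the only available move $R_1$ produces $\nu$, which has no bars at all, since every partial sum $\sum_{s=1}^i \rho_s$ starts with $\rho_1 \geq 3$ and so fails the $<3$ condition in Lemma~\ref{lem:bs_move_lim}. Therefore $R_{\{1\}}(\rho)$ is a two-vertex path, independent of the choice of $\rho_1$ and $\nu$.

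For the inductive step, given two $k$-fuse elements $\rho^{(1)},\rho^{(2)}$ (possibly with different fuse values and different tails), the plan is to build the isomorphism by matching move sequences: send $(R_{j_\ell}\circ\cdots\circ R_{j_1})(\rho^{(1)}) \mapsto (R_{j_\ell}\circ\cdots\circ R_{j_1})(\rho^{(2)})$. Since a vertex of each tree is uniquely encoded by its move sequence from the root, well-definedness and bijectivity both reduce to the single statement that a sequence $(j_1,\ldots,j_\ell)$ with $j_s \in \{1,\ldots,k\}$ is valid from $\rho^{(1)}$ if and only if it is valid from $\rho^{(2)}$; edge-preservation is then automatic.

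I would establish this equivalence by a secondary induction on $\ell$, maintaining the invariant that after any common valid prefix, the two resulting states have identical playable positions, identical light/heavy patterns on those positions (``light'' meaning value in $\{1,2\}$, ``heavy'' meaning value $\geq 3$), and a ``consecutive-sum-$\geq 3$'' property on the playable range. The initial $k$-fuse satisfies all three by Definition~\ref{def:k-fuse}: the no-two-consecutive-$1$s condition gives $\rho_i+\rho_{i+1}\geq 3$ for $i\leq k-2$, while $\rho_{k-1}+\rho_k\geq 4$ at the end. One then verifies, via routine case analysis of Lemma~\ref{lem:bs_move_lim}, that under any move $R_{j'}$ inside the current playable range the invariant persists: the $<3$ bar thresholds are decided purely by the light/heavy pattern (never by exact values), a merge $\mu_{j'-1}+\mu_{j'}$ always produces a heavy entry (being the sum of two positive playable entries, which is $\geq 3$ by the invariant), and consecutive playable entries after the move still sum to $\geq 3$ because merging only increases the summands. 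Consequently the new playable-position set and light/heavy pattern depend only on $j'$ and the previous pattern, not on the specific values, so they agree for $\rho^{(1)}$ and $\rho^{(2)}$.

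The main obstacle is the clean bookkeeping of this light/heavy-pattern invariant across all the subcases of Lemma~\ref{lem:bs_move_lim}, especially at the boundary where the merged position $j'-1$ meets the shifted position $j'$ (whose value is inherited from the former position $j'+1$), and verifying that no ``accidental'' bars can appear beyond position $j'$ due to the tail values in $\nu^{(1)}$ or $\nu^{(2)}$. The latter is prevented by the consecutive-sum-$\geq 3$ property: the $<3$ threshold has already been exhausted by values inside the current playable range, before any tail entry is reached, so the tails never influence bar placement and the subtree is genuinely independent of $\nu^{(1)},\nu^{(2)}$. Once this case analysis is in place, the map $\phi$ is a well-defined bijection respecting reversed BS moves, which is precisely the desired isomorphism.
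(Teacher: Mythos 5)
Your overall strategy --- matching vertices by their move sequences and proving, by induction on the length of the sequence, that a sequence with entries in $\{1,\ldots,k\}$ is valid from one fuse element if and only if it is valid from the other --- is genuinely different from the paper's proof, which inducts on $k$: there, after a first move $R_i$ the only moves at positions $\geq i$ form a chain of repeated $R_i$'s of length $k-i$, each element of the chain contains an $(i-1)$-fuse, and the inductive hypothesis handles the branches at positions $<i$. Your route could be made to work, but as written there is a genuine gap: the invariant (playable set, light/heavy pattern on it, consecutive-sum-$\geq 3$ on the playable range) is too weak, and the sentence you use to dismiss the boundary issue fails in exactly the critical case. The playable region here is an initial segment $\{1,\ldots,p\}$, and for a move $R_{j'}$ with $j'<p$, or with $j'=p$ and $\mu_p$ heavy, the new bars are indeed forced by your invariant. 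But when you play $R_p$ with $\mu_p$ light --- which does occur, e.g.\ playing $R_2$ on the $4$-fuse element $(\overline{2},\overline{1},\overline{2},\overline{4},\nu)$ gives $(\overline{3},\overline{2},4,\nu)$, whose last playable entry is light --- Lemma \ref{lem:bs_move_lim} puts a bar on the new position $p$ iff $\mu_{p+1}\neq 0$, and on the new position $p+1$ iff $\mu_p+\mu_{p+1}<3$ and $\mu_{p+2}\neq 0$. These entries lie strictly beyond the playable range, so your invariant says nothing about them, and your claim that ``the $<3$ threshold has already been exhausted by values inside the current playable range before any tail entry is reached'' is false here: the last playable entry is only $1$ or $2$, and the threshold is completed by the first \emph{unbarred} entry, which is a remnant of the original fuse, not anything your invariant records.

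What actually makes the conclusion true is structural: beyond the playable region the state consists of a suffix $\rho_m,\ldots,\rho_k$ of the original fuse (all nonzero, no two consecutive ones even across the boundary, ending with the heavy entry $\rho_k$) followed by the tail $\nu$, and the heavy entry reaches the last playable slot before any tail entry can influence a bar; moreover, for two fuses of the same length $k$ this happens after the same number of moves, which is what keeps the two playable-set evolutions synchronized. To repair your argument you must strengthen the invariant to record at least the number of unbarred fuse-remnant entries beyond the playable region and the persistence of the no-two-consecutive-ones/heavy-end structure there --- at which point you are essentially re-deriving the repeated-$R_i$ chain and smaller-fuse analysis that the paper's induction on $k$ packages more cleanly. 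Note also that your announced induction on $k$ is vestigial: the inductive step never invokes the hypothesis for smaller $k$, so either drop it or restructure the step along the paper's lines.
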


    \begin{proof}
        We will prove this by induction on $k$. The base case where $k = 1$ is obvious. Suppose the statement is true for $k = 1,\ldots,j-1$, consider any two elements $\rho = (\overline{\rho_1},\ldots,\overline{\rho_j},\nu)$ and $\rho' = (\overline{\rho'_1},\ldots,\overline{\rho'_j},\nu')$ where $(\overline{\rho_1},\ldots,\overline{\rho_j})$ and $(\overline{\rho'_1},\ldots,\overline{\rho'_j})$ are $j$-fuses. Let $\sigma = R_i(\rho)$ and $\sigma' = R_i(\rho')$ for some $i\leq j$, we will prove that $\mathcal{T}_\sigma$ and $\mathcal{T}_{\sigma'}$ are isomorphic. 
        
        First, let $V = R_{\{i,\ldots\}}(\sigma)$ and $V' = R_{\{i,\ldots\}}(\sigma')$, we claim that $\mathcal{T}_\sigma |_V$ and $\mathcal{T}_\sigma' |_{V'}$ are isomorphic. In fact, we claim that $V = \{\pi~|~\pi = R_{i}^m(\sigma), 0\leq m\leq k-i\}$. This is because in $\sigma$, $\sigma_i$ is playable since $\rho_i < 3$, but $\sigma_r$ is not playable for all $r >i$ since $\sum_{s = i}^r\rho_s \geq \rho_i + \rho_{i+1} \geq 3$ by condition 3 in Definition \ref{def:k-fuse}. For the same reason, in $R_{i}^m(\sigma)$ for $0\leq m < k-i$, the $i$th part is playable but any part after that is not. However, in $R_{i}^{k-i-1}(\sigma)$, the $i$th part is $\rho_j$, which is at least $3$. Thus, in the $R_{i}^{k-i}(\sigma$, the $i$th part is also not playable. Thus, $V = \{\pi~|~\pi = R_{i}^m(\sigma), 0\leq m\leq k-i\}$. Similarly, $V' = \{\pi~|~\pi = R_{i}^m(\sigma'), 0\leq m\leq k-i\}$. Hence, $\mathcal{T}_\sigma |_V$ and $\mathcal{T}_\sigma' |_{V'}$ are isomorphic.

        Finally, every element $\pi$ in $\mathcal{T}_\sigma |_V$ and $\mathcal{T}_\sigma' |_{V'}$ contains an $(i-1)$-fuse. Since $i-1\leq j-1$, by the inductive hypothesis, $R_{\{1,\ldots,i-1\}}(\pi)$ are isomorphic for all $\pi$ in $\mathcal{T}_\sigma |_V$ and $\mathcal{T}_\sigma' |_{V'}$. This completes the proof.
        
    \end{proof}

    The proof of Proposition \ref{prop:iso-k-branches} also suggests the following result.

    \begin{prop}\label{prop:k-fuse-len}
         Let $\rho = (\overline{\rho_1},\ldots,\overline{\rho_k},\nu)$ where $(\rho_1,\ldots,\rho_k)$ is a $k$-fuse. Let $\alpha_i$ be $R_i(\rho)$ for $1\leq i \leq k$. Let $\mathcal{T}_{[k],\rho} := \bigcup_{1\leq i\leq k} \mathcal{T}_{\alpha_i}$. Let $V$ be the set of elements in $\mathcal{T}_{[k],\rho}$, then
         \[ V = \{ \pi~|~\pi = (R_{i_j} \circ \ldots \circ R_{i_1})(\rho) \} \]
         where $j\leq k$ and $k\geq i_1 \geq i_2 \geq \ldots \geq i_j$. Specifically, $V = R_{\{1,\ldots,k\}}(\rho)$, and $\mathcal{T}_\rho|_V$ has depth $k$.
    \end{prop}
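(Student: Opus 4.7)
The plan is to establish three properties of the tree $\mathcal{T}_\rho|_V$: (a) any valid sequence of reversed BS moves $R_{i_1}, \ldots, R_{i_j}$ from $\rho$ has $k \geq i_1 \geq \cdots \geq i_j$; (b) any such sequence has length $j \leq k$; and (c) there is a valid sequence of length exactly $k$. Together (a) and (b) give $V = R_{\{1,\ldots,k\}}(\rho)$ and match the combinatorial description, while (c) pins down the depth. The backbone of (a) is a structural invariant carried along the tree: for any $\sigma$ reachable from $\rho$, the barred positions of $\sigma$ form a prefix $\{1, \ldots, p(\sigma)\}$, each $\sigma_i$ with $i \leq p(\sigma)$ is positive, and $\sigma_i + \sigma_{i+1} \geq 3$ for all $1 \leq i \leq p(\sigma) - 1$. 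This holds at $\rho$ because the $k$-fuse conditions forbid two consecutive $1$'s in $\rho_1, \ldots, \rho_{k-1}$ and demand $\rho_k \geq 3$; it is preserved under any valid move $R_a$ (with $a \leq p(\sigma)$) because Lemma~\ref{lem:bs_move_lim} requires $\sum_{t=a}^{s} \sigma_t < 3$ for any new bar at position $s > a$ of $R_a(\sigma)$, contradicting $\sigma_a + \sigma_{a+1} \geq 3$. Hence $p(R_a(\sigma)) \leq a$, so consecutive indices satisfy $i_{j+1} \leq i_j$ and the initial bound $i_1 \leq k$ follows from $p(\rho) = k$, giving (a).

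To bound the length, I would track the position $q(\sigma)$ in $\sigma$ of the block that still contains $\rho_k$ as a summand, setting $q(\sigma) = 0$ once this block has been pushed out. Starting from $q(\rho) = k$, one verifies using Lemma~\ref{lem:bs_move_lim} that each valid move $R_a$ (necessarily with $a \leq p(\sigma) \leq q(\sigma)$) decreases $q$ by exactly one: the block is shifted one step to the left if $a < q(\sigma)$, and is absorbed into its left neighbour (or simply removed, if $a = 1$) when $a = q(\sigma)$. The complementary invariant $p(\sigma) \leq q(\sigma)$ is also preserved: in the absorbing case the merged entry has value at least $\rho_k \geq 3$, which by Lemma~\ref{lem:bs_move_lim} prevents position $a$ from carrying a bar in the result and forces $p(R_a(\sigma)) \leq a - 1 = q(\sigma) - 1$. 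After $k$ moves $q$ therefore hits $0$, which forces $p = 0$, so no further move is playable; this gives (b).

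For (c), I would display the explicit path $\rho \to R_1(\rho) \to R_1^2(\rho) \to \cdots \to R_1^k(\rho)$: iterating $R_1$ simply shifts the sequence leftward, and Lemma~\ref{lem:bs_move_lim} attaches a bar to position $1$ of $R_1^m(\rho)$ precisely when $\rho_m < 3$, which holds for $1 \leq m \leq k - 1$ by the $k$-fuse hypothesis. Thus $k$ successive applications of $R_1$ are valid, producing a root-to-leaf path of length $k$ in $\mathcal{T}_\rho|_V$ and pinning down the depth. The step I expect to require the most care is verifying that the structural invariant and the inequality $p \leq q$ both survive the merge at position $a-1$ in every case, particularly the edge cases $a = 1$ (where $R_1$ is a pure shift with no merge) and $a = p(\sigma) = q(\sigma)$ (where the prefix can shrink by more than one).
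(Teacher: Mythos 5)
Your overall strategy (a direct invariant/monovariant argument: bars form a prefix, a ``block'' position $q$ that drops by one per move, and the explicit $R_1^k$ path for the depth) is genuinely different from the paper's proof, which argues by induction on $k$, reusing the analysis in Proposition~\ref{prop:iso-k-branches}: from $\alpha_i=R_i(\rho)$ the only available moves with index $\geq i$ are repeated $R_i$'s, every element so reached carries an $(i-1)$-fuse, and the inductive hypothesis then yields the weakly decreasing form of the move sequences, with the depth statement reduced (exactly as in your part (c)) to the existence of $R_1^k(\rho)$. However, as written your argument has a genuine gap in the preservation step of the structural invariant. Your invariant only asserts $\sigma_i+\sigma_{i+1}\geq 3$ for $1\leq i\leq p(\sigma)-1$, yet to rule out a new bar at a position $s>a$ of $R_a(\sigma)$ you invoke $\sigma_a+\sigma_{a+1}\geq 3$; when the move is played at the last barred position, $a=p(\sigma)$, with $p(\sigma)<q(\sigma)$, the entry $\sigma_{a+1}$ lies outside the invariant's control and the cited inequality is simply not available. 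A configuration such as $\sigma=(\overline{2},\overline{1},1,3,\ldots)$ satisfies your stated invariant with $p=2$, but playing $R_2$ creates bars at positions $2$ and $3$ by Lemma~\ref{lem:bs_move_lim}, which would destroy both the prefix property and the weak-decrease conclusion (a). Such $\sigma$ cannot actually occur in $\mathcal{T}_{[k],\rho}$, but showing that requires strengthening the invariant, e.g.\ to: the entries strictly between the barred prefix and the block are untouched fuse entries, $\sigma_i+\sigma_{i+1}\geq 3$ for all $i\leq q(\sigma)-1$, and $\sigma_{q(\sigma)}\geq 3$; this stronger statement is preserved (merges only increase entries, shifts leave the segment up to the block intact) and then your contradiction goes through in every case.

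Two smaller repairs are also needed. First, the initialization is wrong as stated: $p(\rho)=k$ need not hold, since an element containing a $k$-fuse may have barred positions beyond $\mu_k$ (the paper's own example $(\overline{2},\overline{1},\overline{3},\overline{1},\overline{2},\ldots)$ has bars at positions $4$ and $5$); correspondingly, your claim (a) is false for arbitrary move sequences from $\rho$, because the first move may have index $>k$. This is harmless for the proposition, which only concerns $\mathcal{T}_{[k],\rho}$, but the invariant should be established after the first move $R_{i_1}$ with $i_1\leq k$ (where one checks directly, using $\rho_a+\rho_{a+1}\geq 3$ and $\rho_k\geq 3$, that the bars of $\alpha_{i_1}$ form the prefix $\{1,\ldots,i_1\}$ or $\{1,\ldots,i_1-1\}$), not at $\rho$ itself. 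Second, in the absorbing case $a=q(\sigma)$ the reason no bar appears at positions $\geq a$ is that $\sigma_a\geq 3$ makes the sum condition of Lemma~\ref{lem:bs_move_lim} fail, not that the merged entry at position $a-1$ is large --- entries of size $\geq 3$ at positions $<a$ do get barred (and indeed $p=q$ does occur, e.g.\ right after playing $R_k$ from $\rho$). With these corrections your route works and is a nice alternative to the paper's induction; your part (c) coincides with the paper's treatment of the depth claim.
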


    \begin{proof}
        We will prove this by induction on $k$. If $k = 1$, then $\rho = (\overline{\rho_1},\nu)$ where $\rho_1 \geq 3$. Since $\rho_1 \geq 3$, in $R_1{\rho}$, no part is playable. Thus, $\mathcal{T}_{[1],\rho} = \mathcal{T}_{\alpha_1}$ only contains one element: $R_1(\rho)$, so the statement is true for $k = 1$.

        If $k>1$, consider any $\mathcal{T}_{\alpha_i}$ with $1 \leq i \leq k$. The proof of Proposition \ref{prop:iso-k-branches} shows that
        \[ R_{\{i,\ldots\}}(\alpha_i) = \{\pi~|~\pi = R_i^m(\alpha_i), 0 \leq m \leq k-i\} = \{\pi~|~\pi = R_i^m(\rho), 1 \leq m \leq k-i+1\}. \]
        Furthermore, every element $\sigma$ in $R_{\{i,\ldots\}}(\alpha_i)$ has an $(i-1)$-fuse, by induction, the elements in $\mathcal{T}_{[i-1],\sigma}$ have the form
        \[ (R_{i_j} \circ \ldots \circ R_{i_1})(\sigma) \]
        where $j\leq i-1$ and $i-1\geq i_1\geq i_2\geq \ldots\geq i_j$. Hence, every element in $\mathcal{T}_{\alpha_i}$ has the form
        \[ (R_{i_j} \circ \ldots \circ R_{i_1} \circ R_{i}^m) \]
        where $j\leq i-1$, $i-1\geq i_1\geq i_2\geq \ldots\geq i_j$, and $0 \leq m \leq k-i+1$. Thus, the statement is true.

        This proves that $V \subseteq R_{\{1,\ldots,k\}}(\rho)$. Clearly, we also have $R_{\{1,\ldots,k\}}(\rho) \subseteq V$, so $V = R_{\{1,\ldots,k\}}(\rho)$. Finally, to show that $\mathcal{T}_\rho|_V$ has depth $k$, it suffices to check that $R_1^k(\rho)$ exists, which is not difficult.
    \end{proof}
    
    For example, in Figure \ref{fig:sep-iso-ex}, we have two elements $\rho = (\overline{2},\overline{1},\overline{3},\overline{1},\overline{2},\ldots)$ and $\rho' = (\overline{1},\overline{2},\overline{3},\overline{2},\overline{2},\ldots)$, both containing 3-fuses. Even though the exact values of the two 3-fuses are different, and the remaining parts are also different ($(\overline{1},\overline{2},\ldots)$ and $(\overline{2},\overline{2},\ldots)$), still $R_{\{1,2,3\}}(\rho)$ and $R_{\{1,2,3\}}(\rho')$ are isomorphic and both have depth $3$.
    
    Proposition \ref{prop:iso-k-branches} means that $R_{\{1,2,\ldots,k\}}(\rho)$ only depends on $k$, and hence the level generating function of this subtree, denote $u_k(x)$, also depends only on $k$. Proposition \ref{prop:k-fuse-len} shows that $u_k(x)$ has degree $k$. For instance, Figure \ref{fig:sep-iso-ex} shows that $u_3(x) = 1+3x+5x^2+4x^3$. Furthermore, if $\rho$ is an element at level $i$ of some tree $\mathcal{T}_\mu$, then the elements in this subtree contribute exactly $u_k(x)x^i$ to the level generating function $g_\mu$. Thus, we say that $\rho$ has a \textit{coefficient} $u_k(x)$. Combining Propositions \ref{prop:k-sep-isomorphic} and \ref{prop:iso-k-branches} we achieve the desired equation \eqref{desired-equation}.

    \begin{cor}\label{cor:k-sep-genfunc}
       If $\mu = (\overline{\mu_1},\ldots,\overline{\mu_k},\nu)$ where $(\overline{\mu_1},\ldots,\overline{\mu_k})$ is a $k$-fuse then the generating functions $g_{\mu}(x)$ and $g_{\nu}(x)$ of $\mathcal{T}_{\mu}$ and $\mathcal{T}_{\nu}$ are related by
        \[ g_{\mu}(x) = u_k(x)\cdot g_{\nu}(x) \]
        where $u_k(x)$ only depends on $k$. 
    \end{cor}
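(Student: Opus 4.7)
The plan is to combine the two previous propositions, since most of the work is already done. First I would observe that by Remark~\ref{rem:k-fuse} together with Proposition~\ref{prop:k-fuse-len}, once any of the first $k$ parts of $\mu$ is played, only the first $k$ positions remain playable, and the game inside those positions terminates within at most $k$ further steps. Consequently, every vertex of $\mathcal{T}_\mu$ can be reached from $\mu$ by first making some sequence of moves in positions $\geq k+1$ to reach a vertex $\rho \in V := R_{\{k+1,k+2,\ldots\}}(\mu)$, and then making some sequence of moves in positions $\{1,2,\ldots,k\}$. This yields the disjoint decomposition
\[
\mathcal{T}_\mu = \bigsqcup_{\rho \in V} R_{\{1,2,\ldots,k\}}(\rho),
\]
and therefore
\[
g_\mu(x) = \sum_{\rho \in V} x^{\ell(\mu,\rho)} \, u_\rho(x),
\]
where $u_\rho(x)$ is the level generating function for $R_{\{1,2,\ldots,k\}}(\rho)$ rooted at $\rho$.

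Next I would invoke Corollary~\ref{cor:all-has-fuse} to note that every $\rho \in V$ itself begins with a $k$-fuse, and then apply Proposition~\ref{prop:iso-k-branches} to conclude that the subtrees $R_{\{1,2,\ldots,k\}}(\rho)$ are all isomorphic as $\rho$ ranges over $V$. In particular, their level generating functions $u_\rho(x)$ all coincide with a common polynomial $u_k(x)$ that depends only on $k$. Pulling this factor out of the sum gives
\[
g_\mu(x) = u_k(x) \cdot \sum_{\rho \in V} x^{\ell(\mu,\rho)}.
\]

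Finally, I would use Proposition~\ref{prop:k-sep-isomorphic}, which provides an isomorphism $\mathcal{T}_\mu|_V \cong \mathcal{T}_\nu$. The proof of that proposition constructs the isomorphism by matching any admissible sequence $R_{i_1},\ldots,R_{i_j}$ from $\nu$ with the sequence $R_{k+i_1},\ldots,R_{k+i_j}$ from $\mu$; in particular the number of moves is preserved, so the isomorphism identifies $\ell(\mu,\rho)$ with $\ell(\nu,\rho')$ for corresponding vertices $\rho \leftrightarrow \rho'$. Hence
\[
\sum_{\rho \in V} x^{\ell(\mu,\rho)} = g_\nu(x),
\]
and the desired equation $g_\mu(x) = u_k(x)\cdot g_\nu(x)$ follows.

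Because Propositions~\ref{prop:k-sep-isomorphic}, \ref{prop:iso-k-branches}, and~\ref{prop:k-fuse-len} have already carried out the substantive structural work, there is no real obstacle here; the only thing to be careful about is that the level statistic transports correctly under both isomorphisms (preservation of path length is immediate from the bijections being induced by matching move sequences), so that all three ingredients fit together cleanly into the single product formula.
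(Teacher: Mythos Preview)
Your proposal is correct and follows essentially the same approach as the paper: the paper's argument (laid out in the text preceding the corollary) also decomposes $\mathcal{T}_\mu$ as $\bigsqcup_{\rho\in V} R_{\{1,\ldots,k\}}(\rho)$ via Remark~\ref{rem:k-fuse} and Proposition~\ref{prop:k-fuse-len}, uses Corollary~\ref{cor:all-has-fuse} and Proposition~\ref{prop:iso-k-branches} to factor out $u_k(x)$, and then applies Proposition~\ref{prop:k-sep-isomorphic} to identify the remaining sum with $g_\nu(x)$. Your explicit remark that the isomorphisms preserve path length is a nice clarification that the paper leaves implicit.
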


    Figure \ref{fig:u_2-ex} shows an example of Corollary \ref{cor:k-sep-genfunc}. Figure \ref{subfig:u_2_core} shows the tree of an element $\nu = (\overline{1}, \overline{2},1,1,\ldots)$, and figure \ref{subfig:u_2_sep} shows the tree of an element $\mu$ that consists of a $2$-fuse followed by $\nu$. In $\mathcal{T}_\mu$, if $R_1$ and $R_2$ are not played, the elements are exactly the elements in $\mathcal{T}_\nu$. However, at each element $\rho$, one can play $R_1$ or $R_2$ and get to $R_{\{1,2\}}(\rho)$. Thus, each element has a coefficient $u_2(x)$.

    \subsection{Combinatorial formula for $u_k(x)$}\label{subsec:formula-u-k}
        
    In later sections, we will see that these coefficients $u_k(x)$ are very crucial, especially for computing the generating function $H_P(x)$. Fortunately, these coefficients can be described combinatorially through weak compositions. Recall that a {\it weak composition}
    $\alpha=(\alpha_1,\alpha_2,\ldots,\alpha_r)$ of $k$ is a sequence of nonnegative integers $\alpha_i$ with $\alpha_1+\cdots+\alpha_r=k$.

    \begin{prop}\label{prop:u_n_formula}
        For all $k$,  
        \[ u_k(x) = \sum_{i=0}^k c_{i,k-i}x^i \]
        where $c_{n,i}$ is the number of weak compositions of $n$ with exactly $i$ zeros.
    \end{prop}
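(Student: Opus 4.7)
The plan is to prove the formula by setting up an explicit bijection between paths of length $l$ in $R_{\{1,\ldots,k\}}(\rho)$ and weak compositions of $l$ with exactly $k-l$ zeros. The first step is to sharpen Proposition \ref{prop:k-fuse-len}: by carefully unpacking its inductive proof (in particular, the observation that the move $R_{s_1}$ may be applied at most $k-s_1+1$ times consecutively before position $s_1$ becomes unplayable, after which the resulting element carries an $(s_1-1)$-fuse), one shows that a weakly decreasing sequence $(i_1,\ldots,i_l)$ with $k \geq i_1 \geq \cdots \geq i_l \geq 1$ is a valid path from $\rho$ if and only if, writing it in run-length form $(s_1^{a_1},\ldots,s_r^{a_r})$ with $k \geq s_1 > \cdots > s_r \geq 1$ and $a_j \geq 1$, the inequalities
\[
a_j \leq s_{j-1}-s_j \qquad (j=1,\ldots,r, \text{ with } s_0 := k+1)
\]
hold.

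The bijection itself is then straightforward. Given valid data $(s_1^{a_1},\ldots,s_r^{a_r})$, I would set $b_0 := s_r-1$ and $b_j := s_{j-1}-s_j-a_j$ for $1 \leq j \leq r$, both non-negative by the validity condition, and associate the weak composition
\[
\bigl(\underbrace{0,\ldots,0}_{b_0},\, a_1,\, \underbrace{0,\ldots,0}_{b_1},\, a_2,\, \underbrace{0,\ldots,0}_{b_2},\, \ldots,\, a_r,\, \underbrace{0,\ldots,0}_{b_r}\bigr),
\]
with the empty run-length data $(r=0)$ mapping to the all-zero composition of length $k$. Its positive entries $a_1,\ldots,a_r$ sum to $l := a_1+\cdots+a_r$, and its total number of zeros telescopes to
\[
\sum_{j=0}^{r} b_j \;=\; (s_r-1) + \sum_{j=1}^{r}(s_{j-1}-s_j) - \sum_{j=1}^{r} a_j \;=\; (s_r-1)+(s_0-s_r)-l \;=\; k-l,
\]
so the image is indeed a weak composition of $l$ with exactly $k-l$ zeros. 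Conversely, any such weak composition splits uniquely into its maximal zero-runs of lengths $b_0,b_1,\ldots,b_r$ and intervening positive entries $a_1,\ldots,a_r$; setting $s_r := 1+b_0$ and $s_{j-1} := s_j+a_j+b_j$ recursively recovers the block data, and a telescoping check gives $s_0 = k+1$ together with all the inequalities in the validity characterization. Summing $x^l$ over the bijection will then yield the claimed formula $u_k(x) = \sum_{l=0}^{k} c_{l,k-l}\,x^l$.

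The main obstacle I anticipate is the first step, namely the sharpening of Proposition \ref{prop:k-fuse-len} to an if-and-only-if statement. As stated, that proposition only asserts that every vertex in the subtree arises from some weakly decreasing sequence, whereas for counting paths one needs the tighter equivalence with the block-multiplicity inequalities $a_j \leq s_{j-1}-s_j$. Establishing this equivalence requires carefully retracing bar placement through Lemma \ref{lem:bs_move_lim} along the recursion inside the proof of Proposition \ref{prop:k-fuse-len}; this is delicate but purely combinatorial. Once that is in hand, the bijection and the telescoping zero-count are essentially immediate.
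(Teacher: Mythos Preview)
Your approach is correct and essentially the same as the paper's: both prove the formula by bijecting paths of length $l$ in $R_{\{1,\ldots,k\}}(\rho)$ with weak compositions of $l$ having exactly $k-l$ zeros, via the run-length decomposition $(s_1^{a_1},\ldots,s_r^{a_r})$ of the weakly decreasing play sequence. Your explicit bijection differs cosmetically from the paper's recursive one (the nonzero blocks $a_1,\ldots,a_r$ appear in the opposite order), and your ``sharpening'' of Proposition~\ref{prop:k-fuse-len} to the validity condition $a_j \le s_{j-1}-s_j$ with $s_0=k+1$ is exactly what the paper uses implicitly in its inverse construction.
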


    \begin{proof}
        Let $\mu = (\overline{\mu_1},\ldots,\overline{\mu_k})$ be an arbitrary $k$-fuse, we will construct a bijection between weak compositions of $i$ with $k-i$ zeros and elements at level $i$ in $\mathcal{T}_\mu$ recursively.  Given a weak composition $(\nu_1,\ldots\nu_\ell)$ of $i$ with $k-i$ zeros, we obtain the corresponding element as follows:

        \begin{enumerate}
            \item If $\nu_1 = \ldots = \nu_\ell = 0$, do nothing and stop. Note that this corresponds to $\mu$, the only element at level $0$, and also corresponds to the only composition of $0$ with $k$ zeros.
            \item Else, there is a largest index $\ell - m$ such that $\nu_{\ell - m}\neq 0$. Then let $p = k-m-\nu_{\ell-m}+1$, and play $R_{p}$ repeatedly $\nu_{\ell - m}$ times. Note that after this, we have a $(p-1)$-fuse and the weak composition $(\nu_1,\ldots,\nu_{\ell-m-1})$; repeat the process.
        \end{enumerate}

        First, observe that after step (2), the remaining weak composition $(\nu_1,\ldots,\mu_{\ell-m-1})$ is a weak composition of $i-\nu_{\ell - m}$ with $n-i - m$ zeros, and the remaining fuse is a $(k-m-\nu_{\ell - m})$-fuse. Since $(i-\nu_{\ell - m}) + (k-i - m) = k-m-\nu_{\ell - m}$, the recursion is well-defined.

        It is easy to see that two different weak compositions define two different playing sequences, and injectivity follows since no two playing sequences lead to the same element by nature of Bulgarian Solitaire.

        Finally, in order to prove surjectivity, we define the inverse function. For each element $\mu$ in the tree, there is a unique sequence of play $(i_1,i_2,\ldots,i_t)$ that yields $\mu$ from the $k$-fuse. Moreover, by the argument in the proof of Proposition \ref{prop:k-sep-isomorphic}, this sequence is weakly decreasing. Thus, we can rewrite the play sequence as $(i_1^{\alpha_1},\ldots. i_s^{\alpha_s})$ where $(i_1,\ldots,i_s)$ is strictly decreasing and $\alpha_j\leq i_{j-1} - i_j$. Now we fill in the parts of the weak composition from right to left. For each $i_j^{\alpha_j}$, we fill in $(\alpha_j,0,\ldots,0)$ with $i_{j-1} - i_j + 1$ zeros (here we take $i_0 = k$). Finally, we fill the rest with zeros, if necessary. It is easy to check that this is the inverse of step (1) and (2) above.
    \end{proof}

    Figure \ref{fig:sep-bij-ex} shows an example of this bijection. Take the composition $(2,1)$ for instance, the $1$ means that we start by playing $R_3$ once. Then we are left with the $2$-fuse $(\overline{2},\overline{4})$ and the composition $2$. This tells us that we play $R_1$ twice, and hence we obtain the empty element after the sequence $R_3,R_1,R_1$.

    \begin{figure}[h!]
		
     \centering
        \begin{subfigure}[b]{0.4\textwidth}
            \centering
            \includegraphics[width = \textwidth]{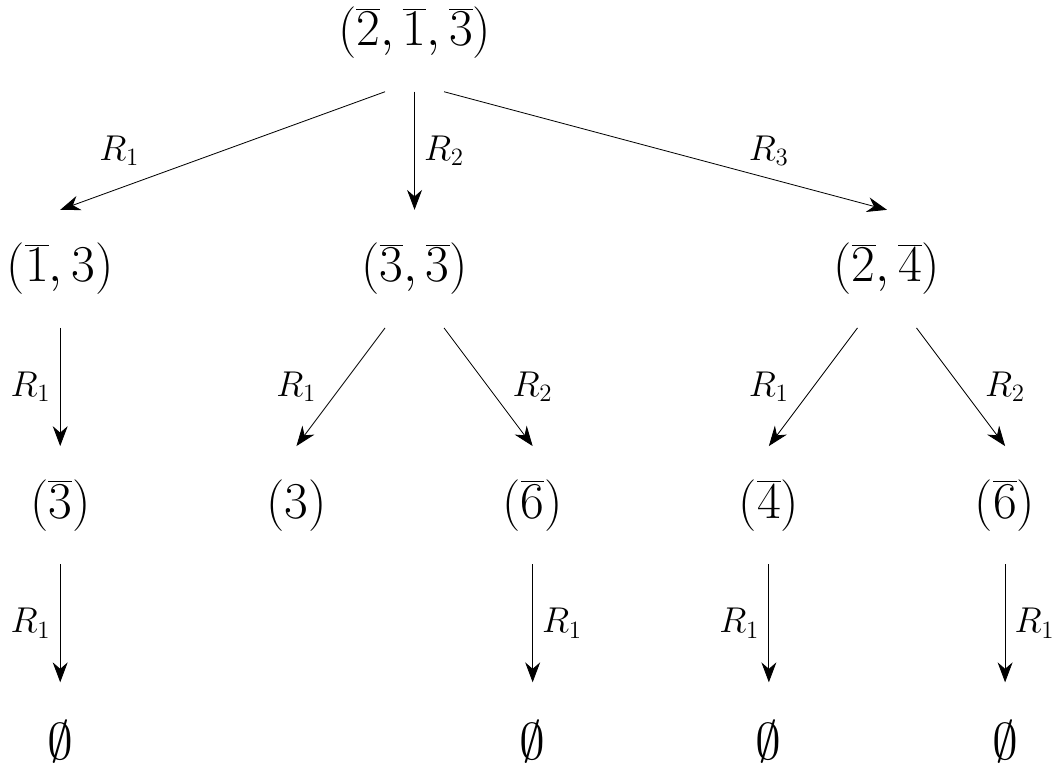}
            \caption{Tree of a 3-fuse}
            \label{subfig:3-sep-ex}
        \end{subfigure}
     \quad\quad
        \begin{subfigure}[b]{0.4\textwidth}
            \centering
            \includegraphics[width = \textwidth]{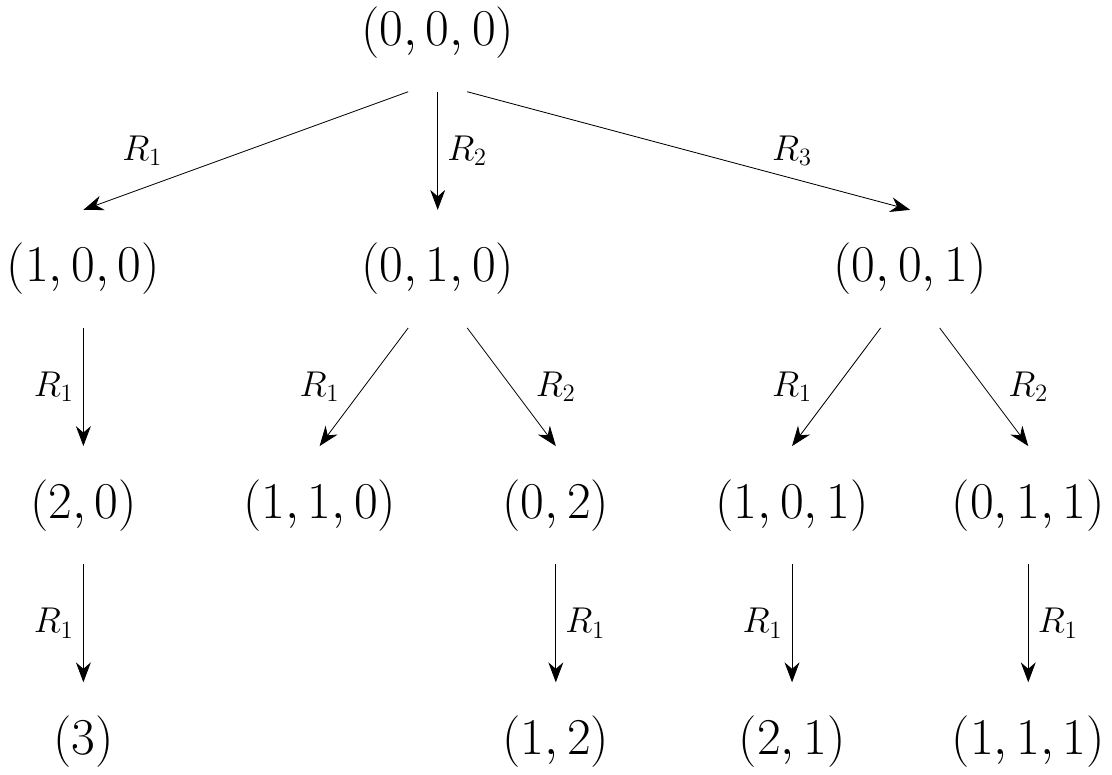}
            \caption{Corresponding compositions}
            \label{subfig:3-sep-com}
        \end{subfigure}
        
        \caption{Tree of a 3-fuse and the corresponding compositions}
        \label{fig:sep-bij-ex}
        
    \end{figure}

    \begin{remark}\label{rem:composition}
        Although there is no simple explicit formula for $c_{k,i}$ that we know of, there is a nice family of generating functions for these numbers. Fixing $i$, one has
        \[ \sum_{k=0}^\infty c_{k,i}x^k = \left(\dfrac{1-x}{1-2x}\right)^{i+1}. \]
        Observe that when $i = 0$,
        \[ \sum_{k=0}^\infty c_{k,0}x^k = \dfrac{1-x}{1-2x} = 1 + \sum_{k=1}^{\infty}2^{k-1}x^k, \]
        which is indeed the generating function for the number of strong compositions.
    \end{remark}

\subsection{$k$-pre-fuses}\label{subsec:k-pre-fuse}

    Now we briefly discuss $k$-pre-fuses, which will be discussed in more detail in Section \ref{sec:thm-1.2}.

    \begin{definition}\label{def:k-pre-fuse}
        We say $(\mu_1,\ldots,\mu_k)$ is a \textbf{$k$-pre-fuse} if they satisfy

        \begin{enumerate}
            \item $\mu_1,\mu_2,\ldots,\mu_{k}$ are either $1$ or $2$,
            \item all parts $1,2,\ldots,k$ are playable, and
            \item for all $j\leq k-1$, $\mu_j=1$ implies $\mu_{j+1} \neq 1$, i.e. there is no consecutive ones.
        \end{enumerate}

        If $\mu = (\mu_1,\ldots,\mu_k,\ldots)$, that is, the first $k$ parts of $\mu$ are $\mu_1,\ldots,\mu_k$, and $(\mu_1,\ldots,\mu_k)$ is a $k$-pre-fuse, we say that $\mu$ contains a $k$-pre-fuse.
    \end{definition}

    The only difference between this definition and Definition \ref{def:k-fuse} is that $\mu_k$ is also less than 3, so this is not a $k$-fuse. However, if we play any $R_i$ for $2\leq i\leq k$, we immediately reach an $(i-1)$-fuse.

\section{$B(WB)^k$ and $W(BW)^k$}\label{sec:thm-1.1}

    Now, that we have a good understanding of the $k$-fuses, we are set to prove Theorem \ref{thm:BWB_WBW},
    asserting
        $H_{B(WB)^k}(x) = H_{W(BW)^k}(x)$ for $k\geq 1$.   
    
    Recall from Section \ref{subsec:quasi_forest} that it suffices to study the generating functions $g_i$'s corresponding to the trees $\mathcal{T}_{C_i}$ where $C_i$'s are the recurrent cycle elements. Corollary \ref{cor:k-sep-genfunc} tells us that if in $\mathcal{T}_{C_i}$ there is an element $\mu$ with a $k$-fuse followed by $C_j$ for some $j$ (not necessarily different from $i$), then the weight of the subtree rooted at this element is $u_k(x)\cdot g_j(x)$. Thus, we can degenerate the whole subtree to one element representing the subtree with weight $u_k(x)\cdot g_j(x)$. We call the tree obtained from $\mathcal{T}_{C_i}$ by degenerating all such subtrees to single elements the \textbf{degenerate} tree of $\mathcal{T}_{C_i}$. For two necklaces $P$ and $P'$, we say two quasi-infinite trees $\mathcal{T}_{C_i}$ and $\mathcal{T}_{C'_i}$ are \textbf{almost isomorphic} if their degenerate trees are isomorphic, and if a degenerated element in $\mathcal{T}_{C_i}$ has weight $u_k(x)g_{C_j}(x)$ then the corresponding element in $\mathcal{T}_{C'_i}$ is also degenerated and has weight $u_k(x)g_{C'_j}(x)$. Note that two trees being almost isomorphic means that the subtrees that consist of the non-degenerate elements are isomorphic.

    For example, recall from Section \ref{subsec:quasi_forest} the quasi-infinite forest $\mathcal{F}_{WBW}$ with three trees rooted at the recurrent cycle elements
    \[ \{C_1,C_2,C_3\} = \{(\overline{1},0,2,1,0,2,\ldots),\quad (0,\overline{2},1,0,2,1,\ldots),\quad (\overline{2},\overline{1},0,2,1,0,\ldots)\} \]
    as shown in Figure \ref{subfig:F_BWW_full}. Observe that $R_1(C_1)$ is $C_2$, so we degenerate the whole subtree rooted at $R_1(C_1)$ to an element with weight $g_{C_2}(x)$. Similarly, we degenerate the whole subtree rooted at $R_1(C_2)$ to an element with weight $g_{C_3}(x)$, and the whole subtree rooted at $R_1(C_3)$ to an element with weight $g_{C_1}(x)$. Finally, $R_2(C_3)$ is a $1$-fuse followed by $C_1$, so we degenerate the whole subtree rooted at $R_2(C_3)$ to an element with weight $u_1(x)g_{C_2}(x)$. The degenerated forest is shown in Figure \ref{subfig:F_BWW_degen}.

    \begin{figure}[h!]
        \centering
        \begin{subfigure}[b]{\textwidth}
            \centering
            \includegraphics[scale = 0.3]{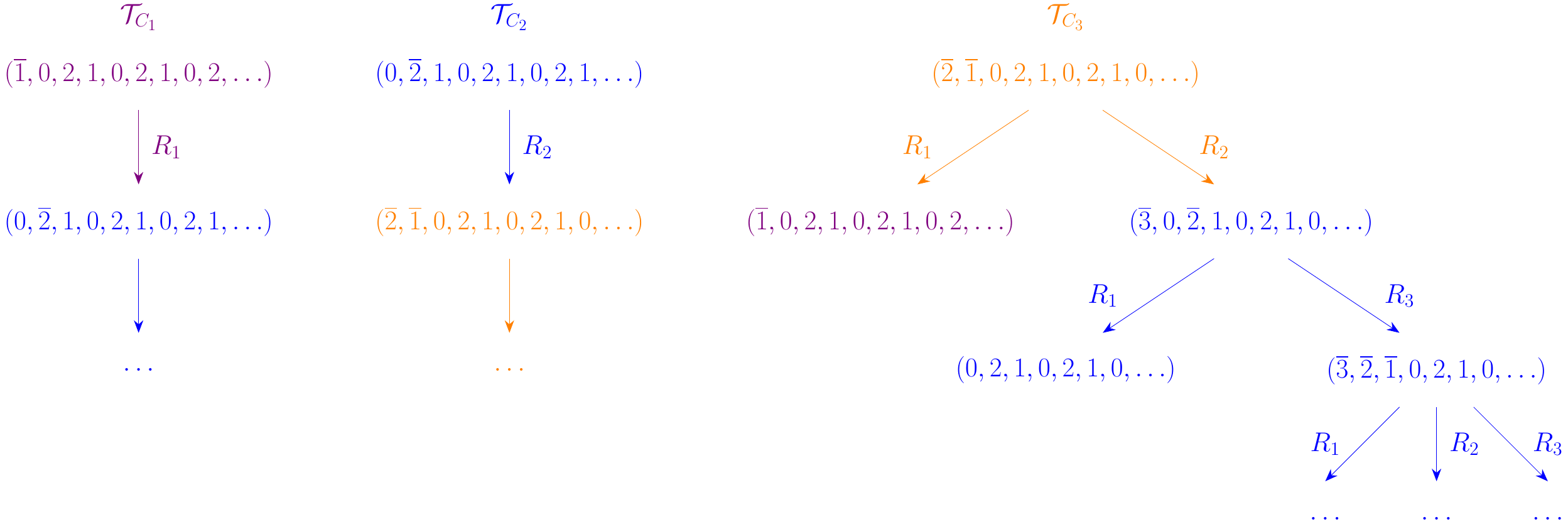}
            \caption{$\mathcal{F}_{WBW}$}
            \label{subfig:F_BWW_full}
        \end{subfigure}

        \vspace{2em}
     
        \begin{subfigure}[b]{\textwidth}
            \centering
            \includegraphics[scale = 0.3]{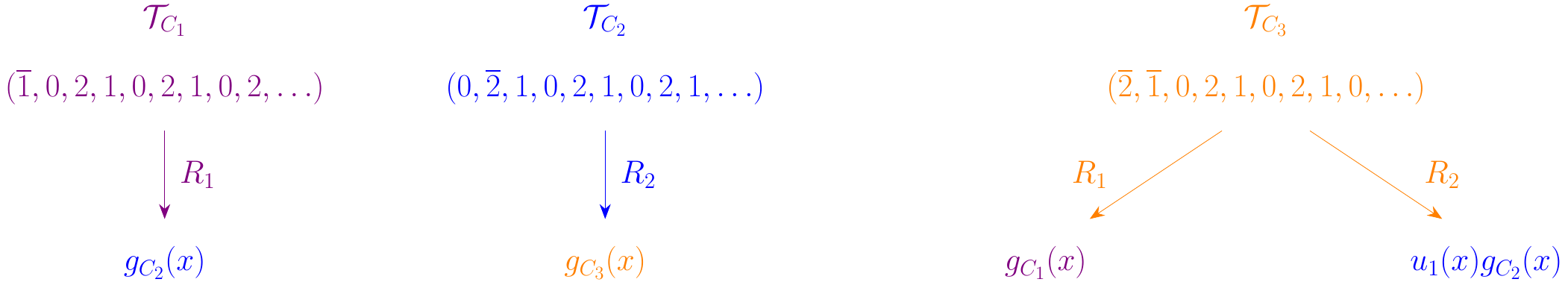}
            \caption{$\mathcal{F}_{WBW}$ degenerated}
            \label{subfig:F_BWW_degen}
        \end{subfigure}
        \caption{$\mathcal{F}_{WBW}$ and its degenerated version}
        \label{fig:F_BWW}
    \end{figure}

    Similarly, the recurrent set for $\mathcal{O}_{BWB}$ is
    \[ \{C_1,C_2,C_3\} = \{(\overline{2},0,1,2,0,1,\ldots),\quad (0,\overline{1},2,0,1,2,\ldots),\quad (\overline{1},\overline{2},0,1,2,0,\ldots)\}. \]
    The quasi-infinite forest $\mathcal{F}_{BWB}$ is shown in Figure \ref{subfig:F_BBW_full}, and its degenerated forest is shown in Figure \ref{subfig:F_BBW_degen}. One can easily check that the degenerated forests of $\mathcal{F}_{WBW}$ and $\mathcal{F}_{BWB}$ are isomorphic by comparing Figures \ref{subfig:F_BWW_degen} and \ref{subfig:F_BBW_degen}. Thus, $\mathcal{F}_{WBW}$ and $\mathcal{F}_{BWB}$ are almost isomorphic. Lemma \ref{lem:almost-iso} will show that this implies $\mathcal{F}_{WBW}$ and $\mathcal{F}_{BWB}$ are isomorphic, which can be seen by comparing Figures \ref{subfig:F_BWW_full} and \ref{subfig:F_BBW_full}. This will be our method for proving Theorem \ref{thm:BWB_WBW}.

    \begin{figure}[h!]
        \centering
        \begin{subfigure}[b]{\textwidth}
            \centering
            \includegraphics[scale = 0.3]{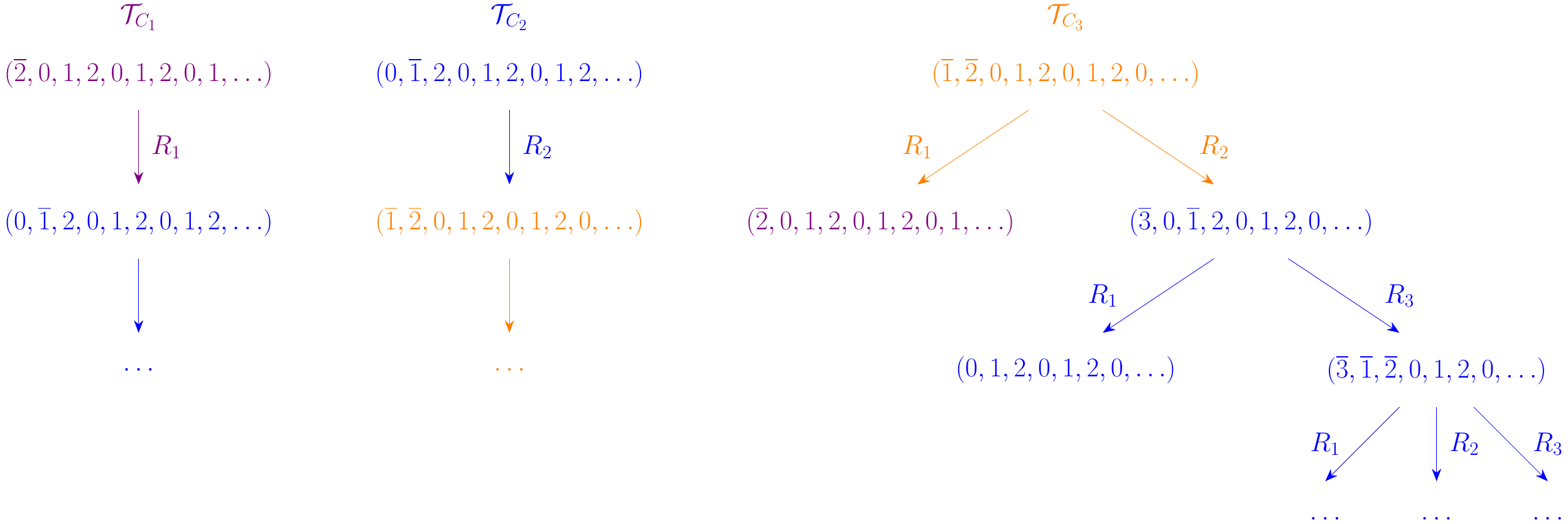}
            \caption{$\mathcal{F}_{BWB}$}
            \label{subfig:F_BBW_full}
        \end{subfigure}

        \vspace{2em}
     
        \begin{subfigure}[b]{\textwidth}
            \centering
            \includegraphics[scale = 0.3]{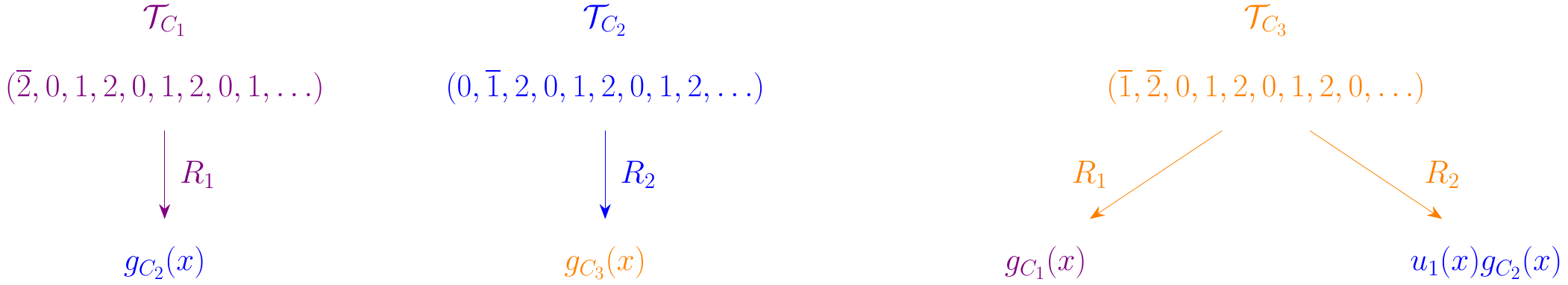}
            \caption{$\mathcal{F}_{BWB}$ degenerated}
            \label{subfig:F_BBW_degen}
        \end{subfigure}
        \caption{$\mathcal{F}_{BWB}$ and its degenerated version}
        \label{fig:F_BBW}
    \end{figure}

    \begin{lemma}\label{lem:almost-iso}
        If $\mathcal{T}_{C_i}$ and $\mathcal{T}_{C'_i}$ are almost isomorphic for all $i$, then $\mathcal{T}_{C_i}$ and $\mathcal{T}_{C'_i}$ are isomorphic for all $i$.
    \end{lemma}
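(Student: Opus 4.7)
The plan is to unfold the almost-isomorphism of degenerate trees to a genuine isomorphism by "expanding" each degenerated node using the structural results from Section~\ref{subsec:k-fuse}.

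First, the hypothesis supplies, for each $i$, a tree isomorphism $\tilde f_i$ between the degenerate version of $\mathcal{T}_{C_i}$ and the degenerate version of $\mathcal{T}_{C'_i}$ such that each degenerate node $\mu \in \mathcal{T}_{C_i}$ with weight $u_k(x)\, g_{C_j}(x)$ corresponds to a degenerate node $\tilde f_i(\mu) \in \mathcal{T}_{C'_i}$ whose weight $u_k(x)\, g_{C'_j}(x)$ has the same pair of labels $(k,j)$. So the "skeletons" match, with matching fuse lengths $k$ and matching indices $j$ pointing to the next recurrent cycle element.

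Second, at each such degenerate node $\mu$, Corollary~\ref{cor:all-has-fuse} furnishes a canonical decomposition
\[
\mathcal{T}_\mu \;=\; \bigsqcup_{\rho \in V_\mu} R_{\{1,\ldots,k\}}(\rho),
\]
where the restriction $\mathcal{T}_\mu|_{V_\mu}$ is isomorphic to $\mathcal{T}_{C_j}$ via Proposition~\ref{prop:k-sep-isomorphic}, and by Proposition~\ref{prop:iso-k-branches} each branch $R_{\{1,\ldots,k\}}(\rho)$ is canonically isomorphic to a universal $k$-fuse subtree depending only on $k$. The analogous decomposition holds at $\tilde f_i(\mu)$, with the same universal $k$-fuse branches but with $\mathcal{T}_{C'_j}$ in place of $\mathcal{T}_{C_j}$.

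Third, define the family $(f_i)_{i=1}^n$ by simultaneous recursion: on the non-degenerate part of $\mathcal{T}_{C_i}$ use $\tilde f_i$, and at each degenerate node $\mu$ with labels $(k,j)$ use the recursively defined $f_j$ on the $V_\mu$-component together with the canonical universal $k$-fuse isomorphism on each branch. The apparent circularity is resolved because every vertex $\sigma \in \mathcal{T}_{C_i}$ sits at finite depth, so its image requires only finitely many unfoldings of the recursion. Equivalently, one constructs compatible isomorphisms on the depth-$\leq N$ truncations by induction on $N$ and takes the union.

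The main obstacle I expect is verifying that the recursively defined $f_i$ truly respects every reverse BS move, particularly at the "seams": the edge from a non-degenerate vertex into the root of a degenerate subtree, and the edges within a degenerate subtree joining an element of $V_\mu$ to the $k$-fuse branch attached to it. Both are controlled by the compatibility built into the proofs of Propositions~\ref{prop:k-sep-isomorphic} and~\ref{prop:iso-k-branches}, once one observes that matching labels $(k,j)$ at corresponding degenerate nodes force identical fuse structure at the seam (so the same moves $R_1,\ldots,R_k$ are available on both sides and their targets are matched). With this verification in place, each $f_i$ is a bijection of vertices that commutes with every $R_j$, so $\mathcal{T}_{C_i} \cong \mathcal{T}_{C'_i}$ for all $i$.
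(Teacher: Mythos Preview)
Your proposal is correct and follows essentially the same underlying recursion as the paper's proof: at a degenerate node with labels $(k,j)$, split moves into the fuse part $R_1,\ldots,R_k$ (handled by the universal $k$-fuse isomorphism of Proposition~\ref{prop:iso-k-branches}) and the remaining moves (handled by reducing to $\mathcal{T}_{C_j}$ via Proposition~\ref{prop:k-sep-isomorphic} and recursing), with termination guaranteed by finite depth.

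The paper packages the same argument more tersely by working directly with sequences of moves: rather than explicitly building the bijections $(f_i)$ and checking the seams, it argues that a sequence $R_{j_1},\ldots,R_{j_m}$ is playable from $C_i$ if and only if it is playable from $C'_i$, treating the three cases (non-degenerate node, degenerate node with $j_r\le k$, degenerate node with $j_r>k$) in one pass. This amounts to the same recursion---the second case invokes the $k$-fuse isomorphism and the third shifts the index by $k$ and appeals to the almost-isomorphism of $\mathcal{T}_{C_j}$ and $\mathcal{T}_{C'_j}$---but it sidesteps the explicit seam verification you flagged, since the move-sequence formulation makes the compatibility across seams automatic. Your more structural phrasing is perhaps clearer about \emph{what} the isomorphism is, at the cost of a little extra bookkeeping.
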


    \begin{proof}
        It suffices to prove that from the roots of $\mathcal{T}_{C_i}$ and $\mathcal{T}_{C'_i}$, one can play a sequence of moves $R_{j_1},\ldots,R_{j_m}$ in $\mathcal{T}_{C_i}$ if and only if one can play the same sequence in $\mathcal{T}_{C'_i}$. Observe that we start at the root of both trees. If before $R_{j_r}$, we have non-degenerate elements in both trees, then $R_{j_r}$ is playable in one tree if and only if it is playable in the other. If we have degenerate elements in both trees, then by definition of almost isomorphic, the degenerate elements both have $k$-fuses followed by $C_j$ and $C_j'$ respectively. If $1\leq j_r\leq k$, then $R_{j_r}$ is playable in both trees, and playing $R_{j_r}$ leads to the ``terminal phase" of the $k$-fuses, which we already know are isomorphic. If $j_r>k$ then playing $R_{j_r}$ is the same as playing $R_{j_r-k}$ in $\mathcal{T}_{C_j}$ and $\mathcal{T}_{C'_j}$. Since $\mathcal{T}_{C_j}$ and $\mathcal{T}_{C'_j}$ are also almost isomorphic, $R_{j_r-k}$ is playable in one tree if and only if it is playable in the other.
    \end{proof}

    We first analyze the quasi-infinite trees of the families $B(WB)^k$ and $W(BW)^k$. For the former family, the recurrent cycle elements are
    \begin{align*}
        C_1 &= (\overline{2},0,2,0,2,0,\ldots, 2,0,1,\ldots)\\
        C_2 &= (0,\overline{2},0,2,0,\ldots, 2,0,1,2,\ldots)\\
        C_3 &= (\overline{2},0,\overline{2},0,\ldots, 2,0,1,2,0,\ldots)\\
        C_4 &= (0,\overline{2},0,\ldots, 2,0,1,2,0,2,\ldots)\\
        \vdots\\
        C_{2k-2} &= (0,\overline{2},0,1,2,0,2,\ldots, 0,2,\ldots)\\
        C_{2k-1} &= (\overline{2},0,\overline{1},2,0,2,\ldots, 0,2,0,\ldots)\\
        C_{2k} &= (0,\overline{1},2,0,2,\ldots, 0,2,0,2,\ldots)\\
        C_{2k+1} &= (\overline{1},\overline{2},0,2,\ldots, 0,2,0,2,0,\ldots)\\
    \end{align*}
    where each element has $k$ twos, $k$ zeros and a one. Similarly, the recurrent cycle elements of the latter family are
    \begin{align*}
        C'_1 &= (\overline{1},0,2,0,2,0,\ldots, 2,0,2,\ldots)\\
        C'_2 &= (0,\overline{2},0,2,0,\ldots, 2,0,2,1,\ldots)\\
        C'_3 &= (\overline{2},0,\overline{2},0,\ldots, 2,0,2,1,0,\ldots)\\
        C'_4 &= (0,\overline{2},0,\ldots, 2,0,2,1,0,2,\ldots)\\
        \vdots\\
        C'_{2k-2} &= (0,\overline{2},0,2,1,0,2,\ldots, 0,2,\ldots)\\
        C'_{2k-1} &= (\overline{2},0,\overline{2},1,0,2,\ldots, 0,2,0,\ldots)\\
        C'_{2k} &= (0,\overline{2},1,0,2,\ldots, 0,2,0,2,\ldots)\\
        C'_{2k+1} &= (\overline{2},\overline{1},0,2,\ldots, 0,2,0,2,0,\ldots)\\
    \end{align*}
    where each element also has $k$ twos, $k$ zeros and a one.

    Our first lemma is clear.

    \begin{lemma}\label{lem:1.1-trivial-case}
        For $i = 1$ and $i = 2j$ ($1\leq j\leq k$), $\mathcal{T}_{C_i}$ and $\mathcal{T}_{C'_i}$ are almost isomorphic.
    \end{lemma}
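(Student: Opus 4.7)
The plan is to show that for each of these indices, the trees $\mathcal{T}_{C_i}$ and $\mathcal{T}_{C'_i}$ are in the simplest possible ``degenerate'' state -- the root has exactly one legal reversed BS move, and that move lands on the next recurrent cycle element with no fuse attached -- so that checking almost-isomorphism reduces to comparing one edge with one edge.

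For $i=1$, I would read off from the listings that $C_1$ and $C'_1$ each carry a single bar at position~$1$, making $R_1$ the only legal move from the root. Using Lemma~\ref{lem:bs_move_lim}(1) to shift one step left and re-installing bars via $\sum_{k=1}^{i}\mu_k<3$, a short computation yields $R_1(C_1)=C_2$ and $R_1(C'_1)=C'_2$. Hence the degenerate tree in each case is the root joined by a single edge to a degenerated child of weight $g_{C_2}(x)$ (respectively $g_{C'_2}(x)$); these are isomorphic with matching (zero) fuse length and matching index~$2$, giving the desired almost-isomorphism.

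For $i=2j$ with $1\le j\le k$, each of $C_{2j}$ and $C'_{2j}$ carries its unique bar at position~$2$, so $R_2$ is the only available move. Applying Lemma~\ref{lem:bs_move_lim}(2) with $j=2$ (which merges $\mu_1$ and $\mu_2$ into the new first part and shifts the remainder left) and then re-installing bars via $\sum_{k=2}^{i}\mu_k<3$, one verifies that $R_2(C_{2j})=C_{2j+1}$ and $R_2(C'_{2j})=C'_{2j+1}$. As in the $i=1$ case, the degenerate tree is the root with one degenerated child, now of weight $g_{C_{2j+1}}(x)$ (respectively $g_{C'_{2j+1}}(x)$), and the almost-isomorphism follows.

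The only real bookkeeping step is confirming the bar patterns on each $C_i$ and $C'_i$ given in the listing -- in particular, that no positions other than the claimed one are playable. Because each such sequence is, outside a single interior $1$, a strictly alternating pattern of $0$'s and $2$'s, the partial-sum test of Lemma~\ref{lem:valid-mu} (read cyclically in the limiting sense of Section~\ref{subsec:BSlimit}) exceeds the threshold immediately after the claimed bar position, ruling out any further bars. This is really the only step that needs care; with it in place, the two explicit shift-and-bar computations above complete the proof.
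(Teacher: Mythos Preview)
Your proposal is correct and follows essentially the same approach as the paper's own proof: both observe that for these indices there is a unique playable position and that playing it lands on the next recurrent element $C_{i+1}$ (respectively $C'_{i+1}$), so the degenerate tree is a single edge. The paper states this in one sentence, while you spell out the positions of the bars and verify the moves via Lemma~\ref{lem:bs_move_lim}; the only minor imprecision is appealing to Lemma~\ref{lem:valid-mu} ``read cyclically in the limiting sense'' to justify the bar pattern, since that lemma is stated for $\BSfinite$---but the paper itself simply takes the listed bar positions as given (having computed them at the end of Section~\ref{subsec:BSlimit}), so this is harmless.
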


    \begin{proof}
        This is clear because for these $C_i$, there is only one playable move, which gives $C_{i+1}$. Thus, the degenerate tree has only two elements: $C_i$ and the degenerate element for $\mathcal{T}_{C_{i+1}}$.
    \end{proof}

    Our next lemma is also straightforward.

    \begin{lemma}\label{lem:1.1-easy-case}
        $\mathcal{T}_{C_{2k+1}}$ and $\mathcal{T}_{C'_{2k+1}}$ are almost isomorphic.
    \end{lemma}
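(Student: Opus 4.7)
The plan is to mimic Lemma~\ref{lem:1.1-trivial-case} by directly unfolding the reverse BS moves at $C_{2k+1}$ and $C'_{2k+1}$. Both elements have bars only at positions~$1$ and~$2$, so exactly the moves $R_1$ and $R_2$ are available at each root. I will compute each child using Lemma~\ref{lem:bs_move_lim} and identify it with either some $C_j$ (resp.\ $C'_j$) or a $1$-fuse followed by such; almost-isomorphism then follows from the definition, since the matching subscripts~$j$ will coincide on the two sides.

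For $R_1$, Lemma~\ref{lem:bs_move_lim}(1) gives $\mu'_i=\mu_{i+1}$, which is a left-cyclic shift of the periodic recurrent element by one position. Comparing against the $C_i$ and $C'_i$ listed just before this lemma shows $R_1(C_{2k+1})=C_1$ and $R_1(C'_{2k+1})=C'_1$. For $R_2$, Lemma~\ref{lem:bs_move_lim}(2) sets $\mu'_1=\mu_1+\mu_2$, which equals $3$ in both cases (since $1+2=2+1=3$), so the new first part is barred with value~$3$ and is a $1$-fuse in the sense of Definition~\ref{def:k-fuse}. The tail $\mu'_2,\mu'_3,\ldots=\mu_3,\mu_4,\ldots$ is a left-cyclic shift of the recurrent element by two positions, which one checks is exactly $C_2$ (resp.\ $C'_2$).

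With these four identifications in hand, the degenerate tree at $C_{2k+1}$ is a root with two degenerated children of weights $g_{C_1}(x)$ and $u_1(x)g_{C_2}(x)$, and the degenerate tree at $C'_{2k+1}$ has the same shape with weights $g_{C'_1}(x)$ and $u_1(x)g_{C'_2}(x)$. The subscripts match under $g_{C_j}\leftrightarrow g_{C'_j}$, so by definition the two trees are almost isomorphic.

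The one step that requires real care is the bar pattern of the tail after $R_2$: Lemma~\ref{lem:bs_move_lim} decrees a bar at $\mu'_i$ for $i\geq 2$ exactly when $\mu'_i\neq 0$ and $\sum_{s=2}^i\mu_s<3$. In both cases $\mu_2+\mu_3\in\{1,2\}$ and $\mu_2+\mu_3+\mu_4\geq 3$, so the rule places a single bar on $\mu'_3$ and none afterward, matching the unique bar of $C_2$ (resp.\ $C'_2$) at its second entry. This is the only nontrivial check in the argument, and I expect it to be the delicate point.
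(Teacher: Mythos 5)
Your proof is correct and follows essentially the same route as the paper's: identify the only two moves $R_1,R_2$ at the roots, recognize $R_1$ as giving the degenerate element for $\mathcal{T}_{C_1}$ (resp.\ $\mathcal{T}_{C'_1}$) and $R_2$ as giving a $1$-fuse followed by $C_2$ (resp.\ $C'_2$), and conclude almost-isomorphism from the matching subscripts. The only difference is that you carry out the Lemma~\ref{lem:bs_move_lim} computations and the bar-pattern check explicitly, which the paper's terser proof leaves implicit.
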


    \begin{proof}
        From $C_{2k+1}$, we have two moves: $R_1$ and $R_2$. If we play $R_1$, we get the degenerate element for $\mathcal{T}_{C_1}$. If we play $R_2$, we get a $1$-fuse followed by $C_2$, which is also a degenerate element. The tree for $C'_{2k+1}$ is exactly the same, so they are almost isomorphic.
    \end{proof}

    Now we tackle the more complicated elements.

    \begin{lemma}\label{lem:1.1-general-case}
        For $i = 2j+1$ ($1\leq j\leq k-1$), $\mathcal{T}_{C_i}$ and $\mathcal{T}_{C'_i}$ are almost isomorphic.
    \end{lemma}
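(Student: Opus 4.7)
The plan is to exploit the fact that $C_{2j+1}$ and $C'_{2j+1}$ share the same prefix of length $2j+2$, namely $(\overline{2},0,\overline{2},0,\ldots,\overline{2},0)$ with $j+1$ twos and $j+1$ zeros; they differ only in the tail, which encodes the position of the isolated ``$1$''. Since the playable moves at the root are exactly $R_1, R_3, \ldots, R_{2j+1}$ (the barred odd positions), and since Lemma~\ref{lem:bs_move_lim} tells us each such move reads and modifies only entries in the shared prefix region, the immediate effect of every root move is parallel in the two trees.

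First I would handle $R_1$, which merely shifts the sequence left, yielding $C_{2j+2}$ from $C_{2j+1}$ and $C'_{2j+2}$ from $C'_{2j+1}$; these become matched degenerate children of weights $g_{C_{2j+2}}(x)$ and $g_{C'_{2j+2}}(x)$. Then I would analyze $R_{2i+1}$ for $1 \leq i \leq j$ using Lemma~\ref{lem:bs_move_lim}: position $2i$ becomes $2$ (the sum $2+0$), position $2i+1$ disappears via the shift, and the remaining prefix of length around $2j+1$ is again identical in the two trees. I would iterate this observation — playing further available moves and tracking which positions are barred — until a $k$-fuse finally appears, at which moment Corollary~\ref{cor:k-sep-genfunc} collapses the subtree to a degenerate node of weight $u_k(x) g_{C_l}(x)$, respectively $u_k(x) g_{C'_l}(x)$, where $l$ is determined by how far down the tail the lone ``$1$'' sits. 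The symmetry between the $B(WB)^k$ and $W(BW)^k$ families is precisely what forces the indices $l$ and $l'$ to match in the manner required by the definition of almost isomorphism.

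The main obstacle is that $R_{2i+1}$ with $i \geq 1$ does not expose a $k$-fuse immediately: after the move, every entry in the prefix region is still at most $2$, so one must track several subsequent reverse BS moves (in practice a handful of $R_1$'s together with a further ``middle'' move) before a fuse materializes. I expect to handle this by induction on $j$: after applying $R_{2i+1}$ and a few prefix-shuffling plays, one reaches an element whose tree can be recognized as a smaller fuse followed by a configuration governed by the lemma for a strictly smaller value of the parameter. The verification that bar placement, playability, and the final recurrent index $l$ propagate consistently between the two families is then a bookkeeping task made tractable by the matching prefixes, and almost isomorphism follows once the correspondence on degenerate nodes is established for every branch.
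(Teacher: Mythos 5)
Your opening observation---that $C_{2j+1}$ and $C'_{2j+1}$ agree on a long prefix, so the two trees behave identically until play reaches the position of the lone $1$---is the same starting point as the paper's argument. But the plan you build on it rests on a false premise and omits the decisive step. First, the playable moves at the root of $\mathcal{T}_{C_{2j+1}}$ are not $R_1,R_3,\ldots,R_{2j+1}$: by the bar rule in Lemma~\ref{lem:bs_move_lim}, once a partial sum of entries reaches $3$ no further bars survive, so $C_{2j+1}=(\overline{2},0,\overline{2},0,2,0,\ldots)$ carries bars only at positions $1$ and $3$ (exactly as displayed in the paper's list of recurrent elements). Hence your case analysis over root moves $R_{2i+1}$ for $1\le i\le j$ concerns moves that do not exist, and it misses the actual shape of the tree: the ``significant parts'' only become playable after a long chain in which one repeatedly plays the \emph{last} playable part, passing through elements of the form $(\overline{2},\ldots,\overline{2},0,\overline{2},0,2,\ldots)$. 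The heart of the paper's proof is the analysis of deviations from this chain: playing an interior barred $2$ of the leading block \emph{immediately} produces an entry $\ge 4$, i.e.\ a fuse (not ``after tracking several subsequent moves,'' as you suggest), after which the branch either terminates or degenerates to a fuse followed by a recurrent element before the significant parts are ever reachable; such branches therefore match between the two families for trivial reasons. Your sketch contains no counterpart of this claim.

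Second, the point where the trees genuinely differ---$(\ldots,0,\overline{1},2,\ldots)$ versus $(\ldots,0,\overline{2},1,\ldots)$---is resolved in the paper by a short explicit verification: off-chain plays at that stage degenerate to fuses followed by $C_{2k}$ resp.\ $C'_{2k}$, and after playing the last playable part two more times \emph{both} families arrive at literally the same element $(\overline{2},\ldots,\overline{2},\overline{3},0,\overline{2},\ldots)$, a fuse followed by $C_2$ resp.\ $C'_2$, so the degenerate nodes carry matching weights $u_\ell(x)g_{C_m}(x)$ and $u_\ell(x)g_{C'_m}(x)$. In your proposal this is replaced by the assertion that ``the symmetry between the two families forces the indices to match,'' which is precisely the statement to be proved, and the proposed induction on $j$ is never formulated: the subtrees reached by deviating are not trees $\mathcal{T}_{C_{2j'+1}}$ for smaller $j'$ but fuse branches handled by Corollary~\ref{cor:k-sep-genfunc}, which is why the paper needs no induction here. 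As written, the argument has a genuine gap at the core of the lemma.
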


    \begin{proof}
        First note that
        \[ C_i = (\overline{2},0,\overline{2},0,\ldots,0,1,2,\ldots) \]
        and 
        \[ C'_i = (\overline{2},0,\overline{2},0,\ldots,0,2,1,\ldots). \]
        Specifically, both elements begin with $(\overline{2},0,\overline{2},0,2,0,\ldots)$ and the first difference is in the $(2k+1-2i)$th and $(2k+2-2i)$th parts where those of $C_i$ are $1,2$ while those of $C'_i$ are $2,1$. Let us call these two parts the \textbf{significant parts}. Until these two parts are played, the two trees are isomorphic. Now we claim that in order for the significant parts to be playable, we need to always play the last playable part, i.e. the playable part with the largest index. Indeed, suppose we have an element $(\overline{2},\overline{2},\ldots,\overline{2},0,\overline{2},0,\ldots)$ (note that $C_i$ and $C'_i$ also have this form themselves), if we play the last playable part, then we get another element of this form. If we do not play the last playable part, then we reach an $\ell$-fuse $(\overline{2},\ldots,\overline{4},\overline{2},2,\ldots,2,0,2,\ldots)$. From here, if we play $R_j$ with $j<\ell$, we trigger the terminating phase and will eventually stop before the significant parts are playable. Else, we can only play $R_\ell$ repeatedly until we get $(\overline{2},\ldots,\overline{2m},0,\overline{2},\ldots)$, which is an $\ell$-fuse followed by a recurrent cycle element. Thus, this degenerates to an element before the significant parts are playable.

        When the first significant part become playable, the elements in the two trees are $(\overline{2},\ldots,\overline{2},0,\overline{1},2,\ldots)$ and $(\overline{2},\ldots,\overline{2},0,\overline{2},1,\ldots)$. Similar to above, if we do not play the last playable part, we will either terminate or get a fuse followed by $C_{2k}$ and $C'_{2k}$, and so the subtrees are almost isomorphic. If we play the last playable part, then we get $(\overline{2},\ldots,\overline{2},\overline{1},\overline{2},0,2,\ldots)$ and $(\overline{2},\ldots,\overline{2},\overline{2},\overline{1},0,2,\ldots)$. Once again, if we do not play the last playable part, then the subtrees are almost isomorphic. If we play the last playable part, in both trees, we get $(\overline{2},\ldots,\overline{2},\overline{3},0,\overline{2},\ldots)$, which is a fuse followed by $C_2$ and $C'_2$, and so this degenerates to the same element in both trees, and hence the trees are almost isomorphic.
    \end{proof}

    The last three lemmas combine to prove Theorem \ref{thm:BWB_WBW}.

    \begin{repthm}{1.2}
        For $k \geq 1$, one has
        \[ H_{B(WB)^k}(x) = H_{W(BW)^k}(x) .\]
    \end{repthm}

    \begin{proof}
        From the lemmas, we have that $\mathcal{T}_{C_i}$ and $\mathcal{T}_{C'_i}$ are almost isomorphic for all $i$, so they are isomorphic for all $i$. Thus, the generating functions $g_i$ and $g'_i$ are the same for all $i$, and hence $H_{B(WB)^k} (x)= H_{W(BW)^k}(x)$.
    \end{proof}

\section{$BW^k$ and $WB^k$}\label{sec:thm-1.2}

    Now, we shift our focus to the families $BW^k$ and $WB^k$ and Theorem \ref{thm:BWW_WBB}, asserting that $H_{BW^k}(x)$ and $H_{WB^k}(x)$ can be both written as a rational function over the same denominator of degree $k+1$ for $k\geq 1$.

    Let us start with a warm-up example with $P=BWWW$. Figure \ref{fig:F_BWWW} shows the degenerated forest for this necklace.
    
    \begin{figure}[h!]
        \centering
        \includegraphics[scale = 0.5]{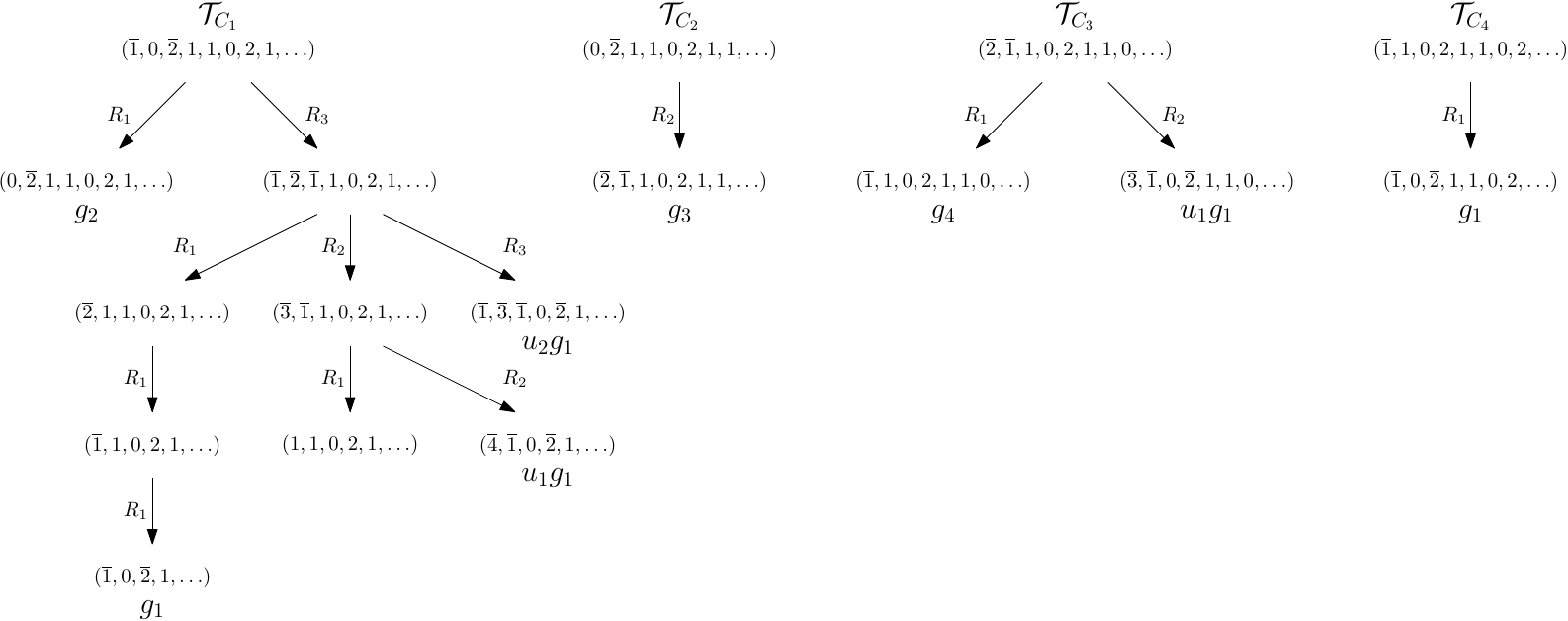}
        \caption{$F_{BWWW}$ degenerated}
        \label{fig:F_BWWW}
    \end{figure}

    From Figure \ref{fig:F_BWWW}, we can write the following system of equations
    \begin{align}\label{eqn:BWWW}
        \begin{cases}
            g_1 &= 1 + x + 2x^2 + 2x^3 + xg_2 +(x^4 + x^3u_1 + x^2u_2)g_1 \\
            g_2 &= 1 + xg_3\\
            g_3 &= 1 + xg_4 + xu_1g_1\\
            g_4 &= 1 + xg_1.
        \end{cases}
    \end{align}

    We can solve this system by substitution. We have
    \[ g_2 = 1 + xg_3 = 1 + x(1+xg_4 + xu_1g_1) =  1 + x(1+x(1+xg_1) + xu_1g_1) \]
    \[ = 1 + x + x^2 + x^3g_1 + x^2u_1g_1. \]
    Thus,
    \begin{align*}
        g_1 &= 1 + x + 2x^2 + 2x^3 + xg_2 + (x^4+x^3u_1+x^2u_2)g_1 \\
        &= A_1 + (2x^4+2x^3u_1+x^2u_2)g_1 \\
        &= A_1 + (2x^4 + 2x^3(1+x) + x^2(1+2x+2x^2))g_1 \\
        &= A_1 + (6x^4 + 4x^3 + x^2)g_1.
    \end{align*}
    Therefore,
    \[ g_1 = \dfrac{-A_1}{6x^4+4x^3+x^2-1}. \]
    Working backwards, we can solve for $g_2, g_3$ and $g_4$:
    \begin{align*}
        g_4 &= 1 + xg_1 = 1+ x\dfrac{-A_1}{6x^4+4x^3+x^2-1} = \dfrac{-A_4}{6x^4+4x^3+x^2-1} \\
        g_3 &= 1 + xg_4 + xu_1g_1 = \dfrac{-A_3}{6x^4+4x^3+x^2-1} \\
        g_2 &= 1 + xg_3 = \dfrac{-A_2}{6x^4+4x^3+x^2-1}
    \end{align*}
    for some polynomials $A_2,A_3,A_4$. Recall from Section \ref{subsec:quasi_forest} that this means
    \[ g = g_1+g_2+g_3+g_4 = \dfrac{-A_1(x)-A_2(x)-A_3(x)-A_4(x)}{6x^4+4x^3+x^2-1}, \]
    and hence
    \[ H_{BWWW}(x) = (1-x)\dfrac{-A_1(x)-A_2(x)-A_3(x)-A_4(x)}{6x^4+4x^3+x^2-1}. \]
    Thus, $H_{BWWW}(x)$ can be written as a rational generating function over a polynomial of degree $4$.

    Similarly, for $P = WBBB$, we have the following system of equations. We encourage the readers to check that this is the correct system.
    \begin{align*}
        \begin{cases}
            g_1 &= 1 + x + x^2 + xg_2 + (x^3 + x^2u_1 + xu_2)g_4 \\
            g_2 &= 1 + xg_3 + xu_1g_4\\
            g_3 &= 1 + xg_4\\
            g_4 &= 1 + xg_1.
        \end{cases}
    \end{align*}
    Although this system is a bit different from (\ref{eqn:BWWW}), by substitution, we also have
    \[ g_2 = 1 + xg_3 + xu_1g_4 = 1 + x(1 + x(1+xg_1)) + xu_1(1+xg_1) \]
    \[ = 1 + x + x^2 + xu_1 + x^3g_1 + x^2u_1g_1. \]
    Thus,
    \begin{align*}
        g_1 &= 1 + x + x^2 + xg_2 + (x^3 + x^2u_1 + xu_2)g_4 \\
        &= A'_1 + (x^4 + x^3u_1)g_1 + (x^4 + x^3u_1 + x^2u_2)g_1 \\
        &= A'_1 + (2x^4 + 2x^3u_1 + x^2u_2)g_1 \\
        &= A'_1 + (2x^4 + 2x^3(1+x) + x^2(1+2x+2x^2))g_1 \\
        &= A'_1 + (6x^4 + 4x^3 + x^2)g_1.
    \end{align*}
    This means that
    \[ g_1 = \dfrac{-A'_1}{6x^4 + 4x^3 + x^2-1}, \]
    and similar to above, eventually we have
    \[ H_{BWWW}(x) = (1-x)\dfrac{-A'_1(x)-A'_2(x)-A'_3(x)-A'_4(x)}{6x^4+4x^3+x^2-1} \]
    for some polynomial $A'_2,A'_3,A'_4$. This is also a generating function over the same polynomial of degree $4$ as $H_{BWWW}$.

    Observe that in both examples above, we use substitution to derive
    \[ g_1 = A + (6x^4+4x^3+x^2)g_1 \]
    for some polynomial $A$. This means we have
    \[ g_1 = \dfrac{-A}{6x^4+4x^3+x^2 -1}, \]
    and eventually we can write both $H_{BWWW}$ and $H_{WBBB}$ as a generating function over $6x^4+4x^3+x^2 -1$, which is a polynomial of degree $4$. This will be our main strategy in this section. 

    Also, observe from the example that in both cases, we encounter the sum $x^2 + xu_1 + u_2$. This sum is indeed homogeneous, for Proposition \ref{prop:u_n_formula} shows that $u_2(x)$ has degree $2$ and $u_1(x)$ has degree $1$. Thus, for our convenience, we will ``normalize" our $u_k$'s by redefining
    \[ u_k(x) = \sum_{i=0}^k c_{i,k-i}x^{i-k} \]
    where $c_{n,i}$ is the number of weak compositions of $n$ with exactly $i$ zeros. Under this new definition, the sum $x^2 + xu_1 + u_2$ becomes $x^2(u_0+u_1+u_2)$ (since $u_0 = 1$). This motivates the following abbreviation:

    \begin{definition}
        Let $v_k(x):= u_0(x)+u_1(x)+\ldots+u_k(x).$
    \end{definition}
    Now, we are ready to carry out the computations for Theorem \ref{thm:BWW_WBB}.

\subsection{$BW^k$}\label{subsec:bww}

    First, we will deal with the more ``friendly'' family of the two. The recurrent cycle elements of this family are
    \begin{align*}
        C_1 &= (\overline{1},\overline{1},1,1,1,\ldots,1,0,2,1,\ldots)\\
        C_2 &= (\overline{1},\overline{1},1,1,\ldots,1,0,2,1,1,\ldots)\\
        C_3 &= (\overline{1},\overline{1},1,\ldots,1,0,2,1,1,1,\ldots)\\
        \vdots \\
        C_{k-3} &= (\overline{1},\overline{1},0,2,1,1,\ldots,1,1,1,\ldots)\\
        C_{k-2} &= (\overline{1},0,\overline{2},1,1,\ldots,1,1,1,1,\ldots)\\
        C_{k-1} &= (0,\overline{2},1,1,\ldots,1,1,1,1,1,\ldots)\\
        C_{k} &= (\overline{2},\overline{1},1,\ldots,1,1,1,1,1,0,\ldots)\\
        C_{k+1} &= (\overline{1},1,\ldots,1,1,1,1,1,0,2,\ldots)
    \end{align*}

    \begin{definition}
    \label{first-definition-of-f's}
        For $k\geq 2$, define $f_k$ to be the polynomial such that when using substitution to solve the system of equations for $BW^k$, we have
        \[ g_1 = A + f_kg_1 \]
        for some polynomial $A$. Then $H_{BW^k}$ can be written as a rational generating function over $f_k -1 $.
    \end{definition}

    It is actually not clear yet why such $f_k$ always exists. Its existence will be proved in Proposition \ref{prop:BWW-recurrence}; furthermore, we will show that these $f_k$'s satisfy the recurrence given in ($\ref{equa:f-recurrence}$). Once we can write $g_1 = A + f_kg_1$, it follows that we can write $g_1$ as a rational generating function over $f_k -1 $. Along the way, Proposition \ref{prop:same-equation} implies that the equation for any $g_\ell$ only depends on $g_{\ell+1},\ldots,g_{k+1}$ and $g_1$. Thus, we can iteratively write $g_{k+1},\ldots,g_{2}$ as rational generating functions over $f_k - 1$. This implies that we can write $H_{BW^k}$ as a rational generating function over $f_k -1 $.
    
    Let us start once again with an example that will illustrate the idea of the recurrence. Figure \ref{fig:BWWWW_trees} shows the quasi-infinite trees corresponding to the recurrent cycle elements for $BWWWW=BW^4$. From the forest, we can set up the following system of equations
    \begin{align*}
        g_1 &= A_1^{(4)} + xv_0g_2 + x^2v_0g_3 + x^4v_1g_5 + x^5u_2g_1\\
        g_2 &= A_2^{(4)} + xv_0g_3 + x^3v_1g_5 + x^4u_2g_1\\
        g_3 &= A_3^{(4)} + xv_0g_4\\
        g_4 &= A_4^{(4)} + xv_0g_5 + x^2u_1g_1\\
        g_5 &= A_5^{(4)} + xv_0g_1
    \end{align*}
    where each $A_i^{(4)}$ is a sum of the terms whose weights do not contain any $g_i$. Hence, each $A_i^{(4)}$ is not relevant to our study of the denominator.

    \begin{figure}[h!]
        \centering
        \includegraphics[width = 0.8\textwidth]{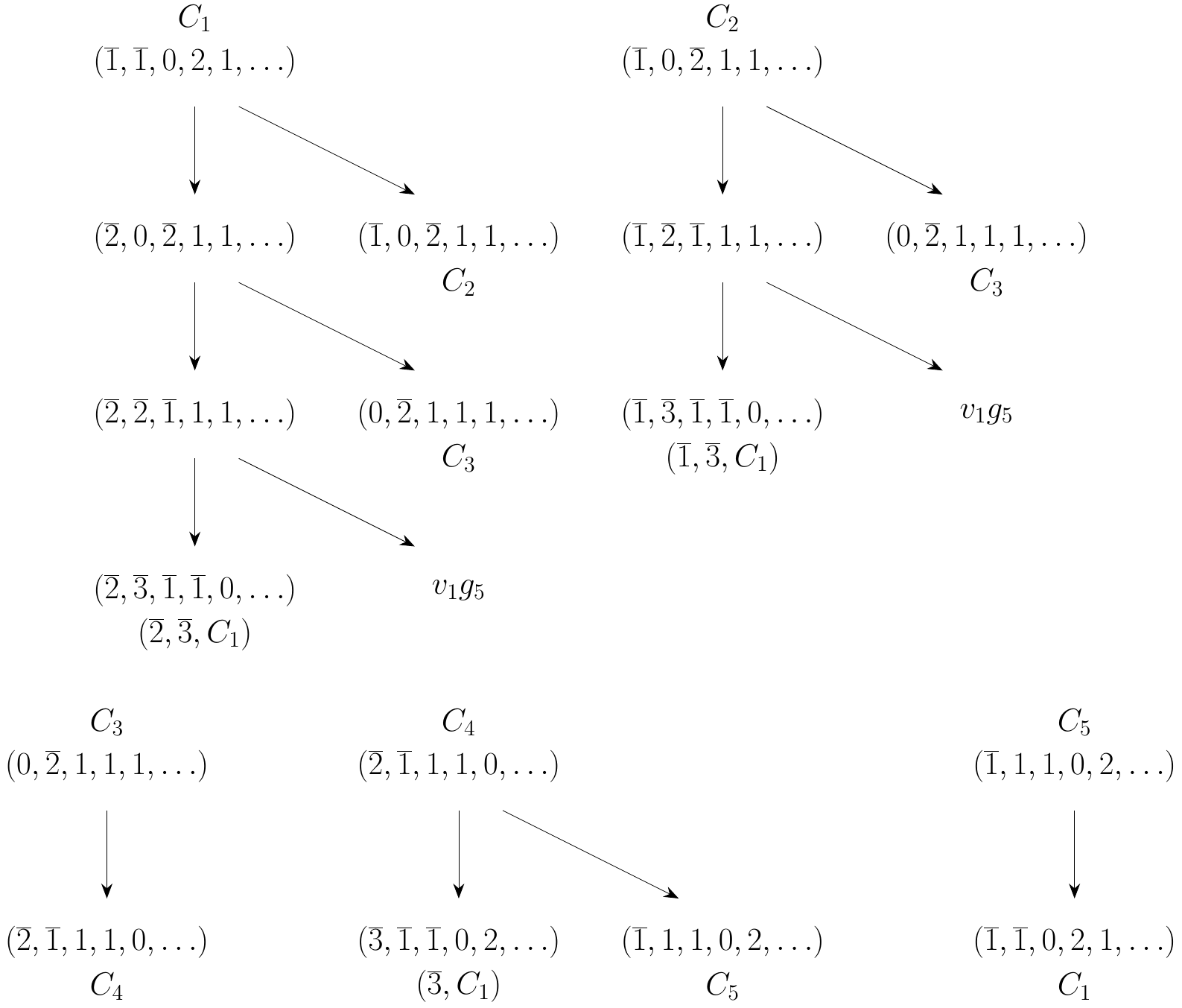}
        \caption{Quasi-infinite forest for $BWWWW$}
        \label{fig:BWWWW_trees}
    \end{figure}

    Similarly, we can set up the following system for $BWWWWW= BW^5$
    \begin{align*}
        g_1 &= A_1^{(5)} + xv_0g_2 + x^3v_1g_4 + x^5v_2g_6 + x^6u_3g_1 \\
        g_2 &= A_2^{(5)} + xv_0g_3 + x^2v_0g_4 + x^4v_1g_6 + x^5u_2g_1\\
        g_3 &= A_3^{(5)} + xv_0g_4 + x^3v_1g_6 + x^4u_2g_1\\
        g_4 &= A_4^{(5)} + xv_0g_5\\
        g_5 &= A_5^{(5)} + xv_0g_6 + x^2u_1g_1\\
        g_6 &= A_6^{(5)} + xv_0g_1
    \end{align*}
    Observe that the equations for $g_2,g_3,\ldots,g_6$ are exactly the same (up to shifting the indices) as those for $g_1,g_2,\ldots,g_5$ for $BW^4$. This is indeed true in general.

    \begin{prop}\label{prop:same-equation}
        If for $P=BW^k$ we have an equation
        \[ g_\ell = A_\ell^{(k)} + \sum_{i>0} x^ia_ig_{\ell+i} \]
        where $a_i$ is some coefficient (in this case $a_i$ is either $v_j$ or $u_j$ for some $j$), then for $P=BW^{k+m}$, we have an equation
        \[ g_{\ell+m} = A_{\ell+m}^{(k+m)} + \sum_{i>0} x^ia_ig_{\ell+m+i}. \]
    \end{prop}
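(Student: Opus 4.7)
The plan is to construct a natural, move-preserving correspondence $\phi$ between the degenerated quasi-infinite tree rooted at $C_\ell$ in $\mathcal{O}_{(BW^k)^\infty}$ and the one rooted at $C_{\ell+m}$ in $\mathcal{O}_{(BW^{k+m})^\infty}$. Once $\phi$ is built, the equation for $g_{\ell+m}$ arises from summing the same degenerated-child contributions that give the equation for $g_\ell$, under the index shift $\ell\mapsto\ell+m$ on each $g_\bullet$ (with $C_1$ remaining $C_1$ in the wrap-around case). The key structural fact enabling this is that $C_{\ell+m}$ in $BW^{k+m}$ is obtained from $C_\ell$ in $BW^k$ by inserting $m$ additional $1$'s into the unplayable run of $1$'s between the playable prefix and the first $0$-$2$ pair of the period. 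Because these extra $1$'s sit past a position where the cumulative sum $\sum_{k=j}^i\mu_k$ has already reached $3$ (thanks to the $0$ and the $2$), by Lemma \ref{lem:valid-mu} they neither create nor destroy any bars on the initial playable parts.

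First I would define $\phi$ by sending $\mu=(R_{j_s}\circ\cdots\circ R_{j_1})(C_\ell)$ to $(R_{j_s}\circ\cdots\circ R_{j_1})(C_{\ell+m})$, and prove by induction on $s$, using Lemma \ref{lem:bs_move_lim}, that such a sequence is playable from one root if and only if it is playable from the other, and that $\phi(\mu)$ agrees with $\mu$ except for $m$ extra $1$'s sitting in a passive band whose location shifts in a controlled way at each move. The base case $s=0$ is immediate from the construction of $C_{\ell+m}$. The inductive step relies on the fact that a move $R_j$ touches only parts $\mu_{j-1},\mu_j$ and reads cumulative sums starting at position $j$, neither of which is affected by $1$'s that remain in the passive band past the first $0$-$2$ pair.

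To finish, I would apply Proposition \ref{prop:k-sep-isomorphic} and Corollary \ref{cor:k-sep-genfunc}: whenever $\mu\in\mathcal{T}_{C_\ell}$ contains a $j$-fuse at the front followed by a recurrent element $C_{\ell+i}$ (contributing $x^i u_j(x) g_{\ell+i}$ to the equation for $g_\ell$), the corresponding $\phi(\mu)\in\mathcal{T}_{C_{\ell+m}}$ also contains the same $j$-fuse followed by $C_{\ell+m+i}$, contributing $x^i u_j(x) g_{\ell+m+i}$ to the equation for $g_{\ell+m}$. The main obstacle is the wrap-around case, where the tail after the fuse in the $BW^k$ system is identified as $C_1$: one must verify via Lemma \ref{lem:tail-biject-P} that the tail under $\phi$ in the $BW^{k+m}$ system is also recognized as $C_1$, so the $g_1$ term is preserved rather than shifted. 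This hinges on the fact that $C_1$ is characterized in both systems as the unique recurrent element whose first two parts are both $\overline{1}$'s, and our invariant places the extra $1$'s safely past this prefix, so the identification is consistent across both systems.
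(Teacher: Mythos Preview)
Your overall strategy---construct a move-for-move correspondence $\phi$ between the two degenerated trees---is the same idea the paper uses, though the paper compresses it to a single observation: $C_\ell$ for $BW^k$ and $C_{\ell+m}$ for $BW^{k+m}$ are both of the form $(\overline{1},\overline{1},1,\ldots,1,0,2,1,\ldots)$ with the \emph{same} number of leading $1$'s (namely $k-\ell-1=(k+m)-(\ell+m)-1$), hence their degenerated trees, and the resulting equations, coincide up to the index shift.

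Your write-up, however, contains two concrete errors. First, you locate the extra $m$ ones ``between the playable prefix and the first $0$-$2$ pair'', but this is backwards: both roots have identical prefixes up through and including the first $0,2$, and the $m$ extra ones sit \emph{after} that pair, in the longer tail of each period. Your own parenthetical ``thanks to the $0$ and the $2$'' already contradicts your stated placement and suggests you have the right picture in mind but wrote it down incorrectly. The distinction is not cosmetic: if the extra ones really lay before the $0,2$, the barred prefixes of the two roots would differ and your induction on $s$ would fail at the base case.

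Second, your wrap-around argument rests on the claim that $C_1$ is ``the unique recurrent element whose first two parts are both $\overline{1}$'s'', which is false for $k\ge 5$: every $C_\ell$ with $1\le\ell\le k-3$ has that property. No such recognition criterion is needed anyway, since the wrap-around consistency follows from pure arithmetic: if $\ell+i$ wraps to $\ell+i-(k+1)$ in the $BW^k$ system, then $(\ell+m)+i$ wraps to $(\ell+m+i)-(k+m+1)=\ell+i-(k+1)$ in the $BW^{k+m}$ system, the same residue.
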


    \begin{proof}
        The equation $g_\ell = A_\ell^{(k)} + \sum_{i>0} x^ia_ig_{\ell+i}$ for $P=BW^k$ is set up using the quasi-infinite tree rooted at the element
        \[ (\underbrace{\overline{1},\overline{1},1,\ldots,1}_{j\text{ copies of }1},0,2,1,\ldots) \]
        for some $j$. Then, for $P=BW^{k+m}$, the equation for $g_{\ell+m}$ is set up using the quasi-infinite tree rooted at the element of the exact same type, i.e.
        \[ (\underbrace{\overline{1},\overline{1},1,\ldots,1}_{j\text{ copies of }1},0,2,1,\ldots). \]
        Thus, the equations are the same up to shifting of the indices.
    \end{proof}

    Proposition \ref{prop:same-equation} leads to a useful corollary.

    \begin{cor}\label{cor:same-f}
        If for $P=BW^k$, we have
        \[ g_1 = A^{(k)} + f_kg_1 \]
        for some polynomial $A^{(k)}$, then for $P = BW^{k+m}$, we have
        \[ g_{m+1} = A^{(k+m)} + f_kg_1 \]
        for some polynomial $A^{(k+m)}$.
    \end{cor}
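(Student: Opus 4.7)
The plan is to apply Proposition~\ref{prop:same-equation} termwise to each equation involved in the substitution that produces $f_k$, and check that the accumulated coefficient of $g_1$ is unchanged when we pass from $BW^k$ to $BW^{k+m}$.

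By hypothesis, starting from the equation for $g_1$ in the $BW^k$ system and iteratively substituting the equations for $g_2, g_3, \ldots, g_{k+1}$ yields $g_1 = A^{(k)} + f_k g_1$, where $f_k$ records the accumulated coefficient of $g_1$ across the procedure. I would carry out the analogous substitution in the $BW^{k+m}$ system, starting from the equation for $g_{m+1}$ and using those for $g_{m+2}, g_{m+3}, \ldots, g_{m+k+1}$. By Proposition~\ref{prop:same-equation}, the equation for $g_{\ell+m}$ in $BW^{k+m}$ has the same polynomial coefficients $x^i a_i$ as the equation for $g_\ell$ in $BW^k$, with the right-hand side indices shifted by $m$. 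Hence, provided the $g_1$ terms correspond correctly, the two substitution procedures run in lock-step, and the contributions to the coefficient of $g_1$ match at every stage.

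The main bookkeeping step, and the only subtle point, is to verify that each occurrence of $g_1$ on the right-hand side of an equation in $BW^k$ corresponds to an occurrence of $g_1$ (rather than $g_{m+1}$ or some other index) in $BW^{k+m}$. Such $g_1$'s arise via the wrap-around $g_{k+2} \equiv g_1$ at the end of the length-$(k+1)$ recurrent cycle; the step count $k+2-\ell$ needed to cycle from $C_\ell$ back to $C_1$ does not depend on $m$, so the shifted term $g_{(\ell+m)+(k+2-\ell)} = g_{k+m+2}$ again wraps to $g_1$ modulo $k+m+1$. Once this correspondence is confirmed, collecting the remaining contributions into $A^{(k+m)}$ gives $g_{m+1} = A^{(k+m)} + f_k g_1$ with the same polynomial $f_k$, as claimed.
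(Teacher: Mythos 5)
Your proposal is correct and takes essentially the same route as the paper: the paper also deduces the corollary by noting, via Proposition~\ref{prop:same-equation}, that the equations for $g_1,\ldots,g_{k+1}$ in the $BW^k$ system coincide (after shifting indices by $m$) with those for $g_{m+1},\ldots,g_{k+m+1}$ in the $BW^{k+m}$ system, and then performing the identical substitution. Your additional bookkeeping check---that a wrapped term $g_{k+2}\equiv g_1$ shifts to $g_{k+m+2}\equiv g_1$ modulo $k+m+1$, so every occurrence of $g_1$ remains $g_1$---correctly verifies a point the paper leaves implicit.
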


    \begin{proof}
        The equations for $g_1,\ldots,g_{k+1}$ of $BW^k$ is the same as those for $g_{m+1},\ldots,g_{k+m+1}$ for $BW^{k+m}$, so substitution yields the desired identity.
    \end{proof}

    For example, for $P=BW^4$, we computed that $g_1 = A^{(4)} + (12x^5+8x^4+2x^3)g_1$, then we know that for $P=BW^5$, we have $g_2 = A^{(5)} + (12x^5+8x^4+2x^3)g_1$. Thus, we obtain the following recurrence.

    \begin{prop}\label{prop:BWW-recurrence}
        The coefficients $f_n$ satisfy the following recurrence
        \begin{align}\label{equa:f-recurrence}
            f_n = \begin{cases}
                \left(\sum_{i = 0}^{\frac{n-4}{2}}x^{2i+1}v_if_{n-(2i+1)}\right) + x^{n-2}v_{\frac{n-4}{2}}f_2 + x^{n+1}v_{\frac{n}{2}} &~~\text{if $n$ is even}\\
                \left(\sum_{i = 0}^{\frac{n-3}{2}}x^{2i+1}v_if_{n-(2i+1)}\right) + x^{n+1}v_{\frac{n+1}{2}} &~~\text{if $n$ is odd}
            \end{cases}.
        \end{align}
    \end{prop}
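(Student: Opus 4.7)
My plan is to establish the recurrence by first writing down the equation for $g_1$ in the system for $BW^n$ in a specific form, and then substituting using Corollary~\ref{cor:same-f} together with the easy identity $g_{n+1} = A_{n+1} + xv_0\, g_1$. This last identity follows immediately from the observation that $C_{n+1}=(\overline 1, 1,\ldots,1,0,2,\ldots)$ has only position $1$ playable, so its tree $\mathcal{T}_{C_{n+1}}$ has a single outgoing move sending $C_{n+1}$ to the $C_1$-subtree.

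The key structural claim, to be proved by analyzing $\mathcal{T}_{C_1}$ via Lemma~\ref{lem:bs_move_lim}, is that the equation for $g_1$ takes the following form. For odd $n=2m+1$,
\[
g_1 \;=\; A \;+\; \sum_{i=0}^{m} x^{2i+1}\,v_i\, g_{2i+2} \;+\; x^{n+1}\,u_{m+1}\, g_1,
\]
and for even $n=2m$,
\[
g_1 \;=\; A \;+\; \sum_{i=0}^{m-2} x^{2i+1}\,v_i\, g_{2i+2} \;+\; x^{n-2}\,v_{m-2}\, g_{n-1} \;+\; x^{n}\,v_{m-1}\, g_{n+1} \;+\; x^{n+1}\,u_m\, g_1.
\]
Both forms reduce to the equations exhibited in the $BW^3$, $BW^4$, and $BW^5$ calculations. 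The $v_i$-coefficient on a term $x^{2i+1}\,g_{2i+2}$ reflects that several distinct paths from $C_1$ reach the degenerate target $C_{2i+2}$, each absorbing a different amount of its length into a fuse of depth $0,1,\ldots,i$, and the corresponding $u_j$'s sum to $v_i$.

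Assuming the structural claim, the rest is routine. Substituting $g_{2i+2} = A_{2i+2} + f_{n-(2i+1)}\, g_1$ (valid for $2i+2\le n$ by Corollary~\ref{cor:same-f}) and $g_{n+1} = A_{n+1} + xv_0\, g_1$ into the two forms above, and collecting the coefficient of $g_1$, the ``top-of-the-equation'' pieces merge via $v_k = v_{k-1}+u_k$ (immediate from $v_k = u_0+\cdots+u_k$). In the odd case this yields the single $x^{n+1}\,v_{m+1}\, g_1$ term; in the even case it yields $x^{n+1}\,v_m\, g_1$ together with $x^{n-2}\,v_{m-2}\, f_2\, g_1$ from the extra side term. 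The remaining summands give $\sum x^{2i+1}\,v_i\, f_{n-(2i+1)}\, g_1$. This matches the recurrence~\eqref{equa:f-recurrence} exactly in both parities.

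The main obstacle is the structural claim itself. The tree $\mathcal{T}_{C_1}$ is intricate because the embedded $0$ in the period of $C_1$ disrupts the natural ``$R_2$-spine'' one would trace from $C_1$, and this disruption is what produces the parity-dependent corrective terms in the even case. The cleanest route appears to be an induction that tracks, at each spine step, the exact shape of the current element (including its pattern of bars), identifying at each step either a fuse that returns to some $C_j$ or the passage to the next spine element; the pre-fuse machinery of Section~\ref{subsec:k-pre-fuse} together with Proposition~\ref{prop:u_n_formula} should govern the branching.
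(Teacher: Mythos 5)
Your proposal is correct and follows essentially the same route as the paper: you set up the same equation for $g_1$ from the structure of $\mathcal{T}_{C_1}$ (your two displayed forms agree exactly, after absorbing the $g_{n+1}$ term into the sum in the odd case, with the equation the paper reads off Figure~\ref{fig:BWW_play_trees}), then substitute via Corollary~\ref{cor:same-f} and $g_{n+1}=1+xg_1$ and merge the top terms using $v_i+u_{i+1}=v_{i+1}$, recovering \eqref{equa:f-recurrence} in both parities. The ``structural claim'' you flag as the main obstacle is precisely what the paper itself justifies only by exhibiting the trees in Figure~\ref{fig:BWW_play_trees}, so your write-up is at the same level of detail as the paper's own proof rather than missing a step the paper supplies.
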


    \begin{proof}
        This proposition is best illustrated by a figure.

        \begin{figure}[h!]
            \centering
            \includegraphics[width = \textwidth]{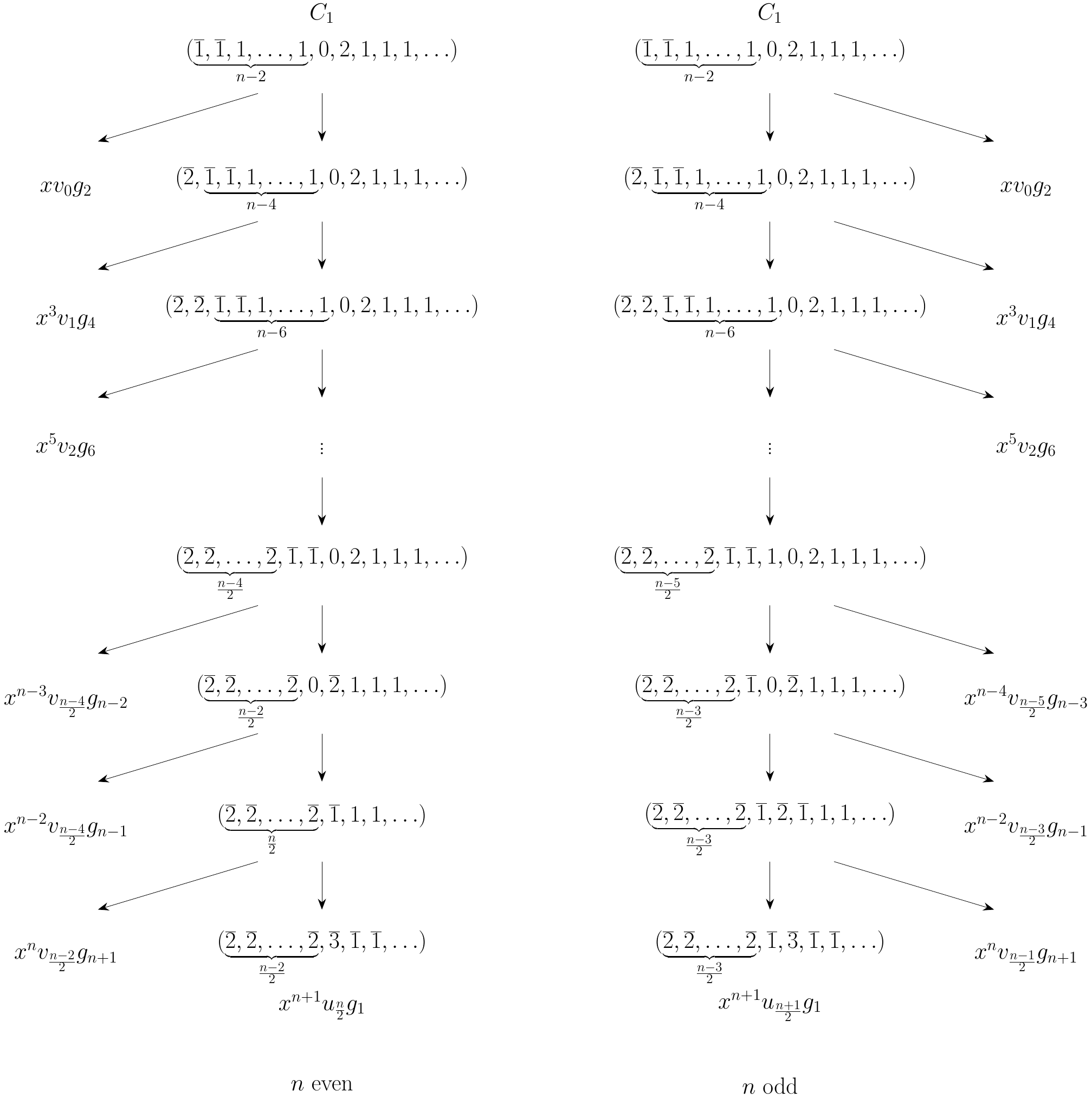}
            \caption{Quasi-infinite trees $\mathcal{T}_{C_1}$ when $n$ is even (left) and odd (right)}
            \label{fig:BWW_play_trees}
        \end{figure}

        Figure \ref{fig:BWW_play_trees} shows the quasi-infinite trees $\mathcal{T}_{C_1}$ for both cases. Hence, we can set up the equation
        \begin{align*}
            g_1 = A_1^{(n)} + \begin{cases}
                \left(\sum_{i = 0}^{\frac{n-4}{2}}x^{2i+1}v_ig_{2i+2}\right) + x^{n-2}v_{\frac{n-4}{2}}g_{n-1} + x^{n}v_{\frac{n-2}{2}}g_{n+1} + x^{n+1}u_{\frac{n}{2}}g_1 &\text{if $n$ is even}\\
                \left(\sum_{i = 0}^{\frac{n-3}{2}}x^{2i+1}v_ig_{2i+2}\right) + x^{n}v_{\frac{n-1}{2}}g_{n+1} + x^{n+1}u_{\frac{n+1}{2}}g_1 &\text{if $n$ is odd}
            \end{cases}.
        \end{align*}
        By Corollary \ref{cor:same-f}, each $g_{i+1}$ can be substituted by $A_{i+1} + f_{n-i}g_1$ for some polynomial $A_{i+1}$. In addition, in both cases, we have $g_{n+1} =  1 + xg_1$, and since $v_i + u_{i+1} = v_{i+1}$, we have
        \[ x^{n}v_{\frac{n-2}{2}}g_{n+1} + x^{n+1}u_{\frac{n}{2}}g_1 = x^{n}v_{\frac{n-2}{2}} + x^{n+1}v_{\frac{n}{2}}g_1 \]
        for even $n$ and
        \[ x^{n}v_{\frac{n-1}{2}}g_{n+1} + x^{n+1}u_{\frac{n+1}{2}}g_1 = x^{n}v_{\frac{n-1}{2}} + x^{n+1}v_{\frac{n+1}{2}}g_1 \]
        for odd $n$. Thus,
        \begin{align*}
            g_1 = A^{(n)} + \begin{cases}
                \left(\sum_{i = 0}^{\frac{n-4}{2}}x^{2i+1}v_if_{n-(2i+1)}g_1\right) + x^{n-2}v_{\frac{n-4}{2}}f_2g_1 + x^{n+1}v_{\frac{n}{2}}g_1 &~~\text{if $n$ is even}\\
                \left(\sum_{i = 0}^{\frac{n-3}{2}}x^{2i+1}v_ig_{2i+2}\right) + x^{n+1}v_{\frac{n+1}{2}}g_1 &~~\text{if $n$ is odd}
            \end{cases}
        \end{align*}
        for some polynomial $A^{(n)}$. This gives equation \eqref{equa:f-recurrence}.
    \end{proof}

    \begin{cor}\label{cor:deg_BWWW}
        For all $n$, $f_n$ has degree $n+1$.
    \end{cor}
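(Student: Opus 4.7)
The plan is to prove $\deg f_n = n+1$ by strong induction on $n$, using the recurrence \eqref{equa:f-recurrence} from Proposition \ref{prop:BWW-recurrence}. The base cases $n=2$ and $n=3$ are established by direct calculation from the associated systems of equations: the worked example in the text gives $f_3 = 6x^4+4x^3+x^2$, which has degree $4$, and an analogous (smaller) system for $P=BWW$ yields $f_2 = 2x^3+x^2$, of degree $3$ (one can quickly verify this is compatible with the recurrence, since $xf_2 + x^4 v_2$ expands to exactly $6x^4+4x^3+x^2$).

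For the inductive step, I would first record the key quantitative fact about the normalized functions: in the convention $u_k(x) = \sum_{i=0}^{k} c_{i,k-i}x^{i-k}$, each $u_k$ is a Laurent polynomial whose highest power of $x$ is $x^0$, with positive leading coefficient $c_{k,0}=2^{k-1}$ for $k\geq 1$ (and $u_0=1$). Summing, $v_k$ also has top $x$-power $x^0$, with positive leading coefficient $2^k$ for $k\geq 1$ (and $v_0=1$). Assuming $\deg f_m = m+1$ for all $2\leq m<n$, one then reads off the even-$n$ case of \eqref{equa:f-recurrence} and checks that every term contributes exactly degree $n+1$: the generic summand $x^{2i+1}v_i f_{n-(2i+1)}$ contributes $(2i+1)+0+(n-2i)=n+1$; the off-pattern term $x^{n-2}v_{(n-4)/2}f_2$ contributes $(n-2)+0+3=n+1$; and the standalone $x^{n+1}v_{n/2}$ contributes $n+1$ directly. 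The odd-$n$ case is entirely analogous.

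To close the induction one must verify that the coefficient of $x^{n+1}$ in $f_n$ does not vanish, and the cleanest route is a positivity argument. By induction, each earlier $f_m$ has nonnegative coefficients as an ordinary polynomial in $x$ (even though it is expressed through the Laurent polynomials $v_i$ combined with compensating powers of $x$), and the same is obviously true of each $v_i$; since \eqref{equa:f-recurrence} presents $f_n$ as a sum of products of these nonnegative objects with powers of $x$, $f_n$ inherits nonnegative coefficients. The standalone term $x^{n+1}v_{n/2}$ alone contributes exactly $2^{n/2}$ to the coefficient of $x^{n+1}$ (respectively $2^{(n+1)/2}$ in the odd case), and no other term can subtract from this. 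The main delicate point of the entire argument is precisely this ruling out of cancellation at the top degree; once the positivity observation is in hand, the induction closes immediately.
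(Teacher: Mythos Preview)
Your proof is correct and follows the same inductive approach as the paper, whose proof is the single line ``This is immediate from \eqref{equa:f-recurrence}, knowing that $v_i$ has degree $0$ for all $i$.'' Your version is more careful---in particular, your positivity argument ruling out top-degree cancellation makes explicit a point the paper leaves unstated, and your verification that the base case $f_2=2x^3+x^2$ is consistent with the recurrence is a nice sanity check.
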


    \begin{proof}
        This is immediate from (\ref{equa:f-recurrence}), knowing that $v_i$ has degree $0$ for all $i$.
    \end{proof}

    Corollary \ref{cor:deg_BWWW} means that for all $n$, $H_{BW^n}$ can be written as a generating function over $f_n-1$, which is a polynomial of degree $n+1$.

\subsection{$WB^k$}\label{subsec:WBB}

    Now we will shift our attention to the other family, namely $WB^k$. The recurrent cycle elements of this family are
    \begin{align*}
        C_1 &= (\overline{1},\overline{1},\overline{1},1,1,\ldots,2,0,\ldots)\\
        C_2 &= (\overline{1},\overline{1},1,1,\ldots,2,0,1,\ldots)\\
        C_3 &= (\overline{1},\overline{1},1,\ldots,2,0,1,1,\ldots)\\
        \vdots \\
        C_{k-2} &= (\overline{1},\overline{1},2,0,1,1,1,\ldots,1,\ldots)\\
        C_{k-1} &= (\overline{1},\overline{2},0,1,1,1,\ldots,1,1,\ldots)\\
        C_{k} &= (\overline{2},0,1,1,1,\ldots,1,1,1,\ldots)\\
        C_{k+1} &= (0,\overline{1},1,1,\ldots,1,1,1,2,\ldots)
    \end{align*}
    Figure \ref{fig:WBBBB_trees} shows the forest for $P = WBBBB = WB^4$.
    
    \begin{figure}[h!]
        \centering
        \includegraphics[width = \textwidth]{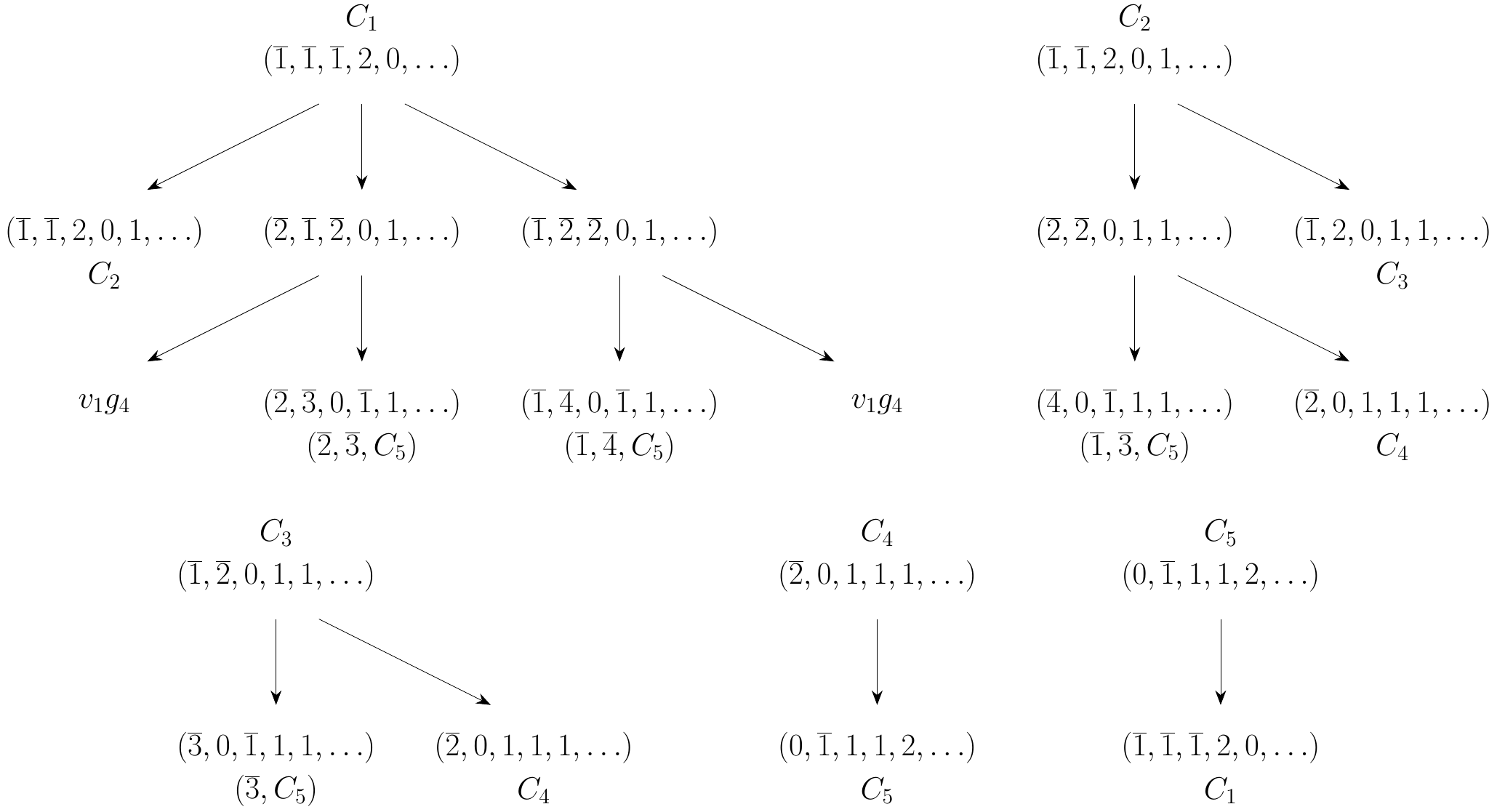}
        \caption{Quasi-infinite forest for $WBBBB$}
        \label{fig:WBBBB_trees}
    \end{figure}

    \noindent From Figure \ref{fig:WBBBB_trees}, we can set up the following system of equations.
    \begin{align*}
        g_1 &= B_{1}^{(4)} + xv_0g_2 + x^3v_1g_4 + x^4u_2g_5 + x^3v_1g_4 + x^4u_2g_5\\
        g_2 &= B_{2}^{(4)} + xv_0g_3 + x^2v_0g_4 + x^3u_1g_5\\
        g_3 &= B_{3}^{(4)} + xv_0g_4 + x^2u_1g_5\\
        g_4 &= B_{4}^{(4)} + xv_0g_5\\
        g_5 &= B_{5}^{(4)} + xv_0g_1
    \end{align*}
    Similarly, we can set up the following system for $P = WB^5$.
    \begin{align*}
        g_1 &= B_{1}^{(5)} + xv_0g_2 + x^3v_1g_4 + x^4v_1g_5 + x^5u_2g_6 + x^4v_2g_5 + x^5u_3g_6\\
        g_2 &= B_{2}^{(5)} + xv_0g_3 + x^3v_1g_5 + x^4u_2g_6\\
        g_3 &= B_{3}^{(5)} + xv_0g_4 + x^2v_0g_5 + x^3u_1g_6\\
        g_4 &= B_{4}^{(5)} + xv_0g_5 + x^2u_1g_6\\
        g_5 &= B_{5}^{(5)} + xv_0g_6\\
        g_6 &= B_{6}^{(5)} + xv_0g_1
    \end{align*}
    Similar to the case for $P = BW^k$, we can see that the equations for $g_2,\ldots,g_5$ for $WB^4$ are the same as those for $g_3,\ldots,g_6$ for $WB^5$. However, there is a \textit{minor difference} between the equation for $g_1$ for $WB^4$ and that for $g_2$ for $WB^5$. This is because $C_1$ has three playable parts, so besides the \textit{main branch} after playing $R_1$ and $R_2$, we also have the \textit{extra branch} after playing $R_3$. This extra branch, however, does not show up in longer necklaces, so the equations are different. To take into account this minor difference, we have a slightly different definition.

    \begin{definition}
        For $k\geq 2$, let $h_k$ be the polynomial such that when using substitution to solve the system of equation for $WB^{k+1}$, one has
        \[ g_2 = B + h_k g_1 \]
        for some polynomial $B$.
    \end{definition}

    \noindent Note that we have to define $h_k$ using the tree $\mathcal{T}_{C_2}$ of $WB^{n+1}$ to account for the minor difference above. We also have two results analogous to Proposition \ref{prop:same-equation} and Corollary \ref{cor:same-f}.

    \begin{prop}\label{prop:same-equation-WBB}
        If for $P=WB^k$ we have an equation
        \[ g_\ell = B_{\ell}^{(k)} + \sum_{i>0} x^ia_ig_{\ell+i} \]
        where $\ell > 1$ and $a_i$ is some coefficient (in this case $a_i$ is either $v_j$ or $u_j$ for some $j$), then for $P=BW^{k+m}$, we have an equation
        \[ g_{\ell+m} = B_{\ell+m}^{(k+m)} +  \sum_{i>0} x^ia_ig_{\ell+m+i}. \]
    \end{prop}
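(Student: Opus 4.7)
The plan is to adapt the proof of Proposition \ref{prop:same-equation} to the $WB^k$ family, with the hypothesis $\ell > 1$ included precisely to rule out the anomaly at $C_1$. First I would verify the structural coincidence between $C_\ell \in \mathcal{O}_{WB^k}$ and $C_{\ell+m} \in \mathcal{O}_{WB^{k+m}}$ for $\ell > 1$: direct inspection of the list of recurrent elements given just before the proposition shows that both sequences share an identical initial profile, including bar placements. Concretely, for $2 \le \ell \le k-2$, both read as $(\overline{1}, \overline{1}, 1, \ldots, 1, 2, 0, \ldots)$ with exactly $k - \ell = (k+m) - (\ell+m)$ ones between the two barred $1$'s and the $2$; the remaining cases $\ell \in \{k-1, k, k+1\}$ likewise match their shifted counterparts in $\mathcal{O}_{WB^{k+m}}$. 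The two elements differ only in the periodic tail that follows this common prefix.

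Next, following the logic of Proposition \ref{prop:same-equation}, I would argue that the equation for $g_\ell$ is determined entirely by this common initial profile. The key point is that starting from $C_\ell$ with $\ell > 1$, every reverse BS move $R_j$ affects only a bounded prefix of the sequence: by Lemma \ref{lem:bs_move_lim}, the barred positions of any descendant occur within an initial segment controlled by the condition $\sum_{k=j}^i \mu_k < 3$, and the plays cannot propagate far enough to distinguish the two tails. By an induction on the number of reverse BS moves, analogous to the one in Proposition \ref{prop:k-sep-isomorphic}, the plays available from $C_\ell$ in $\mathcal{O}_{WB^k}$ correspond bijectively to the plays available from $C_{\ell+m}$ in $\mathcal{O}_{WB^{k+m}}$, and each path that lands on a recurrent element $C_{\ell+i}$ in $\mathcal{O}_{WB^k}$ corresponds to one landing on $C_{\ell+m+i}$ in $\mathcal{O}_{WB^{k+m}}$, carrying identical weight $x^i a_i$. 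Reading off the equations yields the desired matching.

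Finally, I would record why the hypothesis $\ell > 1$ is essential: the element $C_1$ in $\mathcal{O}_{WB^k}$ has three barred parts $(\overline{1}, \overline{1}, \overline{1}, \ldots)$, opening up an extra branch via the play $R_3$ that is unavailable from any $C_\ell$ with $\ell > 1$. The subsequent behavior of this branch depends on the number of $1$'s sitting between position $3$ and the first $2,0$ pair, a quantity that grows with $k$, so the equation for $g_1$ genuinely changes under $k \to k+m$. The main obstacle in executing the proof is the rigorous verification, via the aforementioned induction, that for $\ell > 1$ no reverse BS move sequence starting at $C_\ell$ can ever produce a play index $j$ large enough to reach into the differing tail before re-entering the recurrent cycle; this reduces to checking that the two admissible plays $R_1$ and $R_2$ (and, in the boundary cases $\ell \in \{k-1, k, k+1\}$, their analogues) either shorten or merge the initial prefix without ever reaching the tail, so that the shape of every descendant is determined purely by local data shared by $C_\ell$ and $C_{\ell+m}$.
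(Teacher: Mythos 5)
Your proposal is correct and follows essentially the same route as the paper: the paper states this proposition without proof as the analogue of Proposition \ref{prop:same-equation}, whose proof is exactly your observation that for $\ell>1$ the root $C_\ell$ of $\mathcal{O}_{WB^k}$ and the root $C_{\ell+m}$ of $\mathcal{O}_{WB^{k+m}}$ have the same initial profile (bars included), so the degenerated trees and hence the equations agree up to the index shift. Your additional care in checking that plays from $C_\ell$ never probe the differing periodic tails before the branches degenerate, and that $\ell>1$ excludes the anomalous three-barred element $C_1$, only makes explicit what the paper leaves implicit.
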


    \begin{cor}\label{cor:same-h}
        If for $P=BW^{k+1}$, we have
        \[ g_2 = B^{(k)} + h_kg_1 \]
        for some polynomial $B^{(k)}$, then for $P = BW^{k+m+1}$, we have
        \[ g_{m+1} = B^{(k+m)} + h_kg_1 \]
        for some polynomial $B^{(k+m)}$.
    \end{cor}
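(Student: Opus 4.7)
The plan is to mirror the argument for Corollary~\ref{cor:same-f}, replacing Proposition~\ref{prop:same-equation} by Proposition~\ref{prop:same-equation-WBB}. To begin, I would unpack how the identity $g_2 = B^{(k)} + h_k g_1$ for $P = WB^{k+1}$ is derived in the first place: starting from the equation for $g_2$ read off from $\mathcal{T}_{C_2}$, one iteratively substitutes the equations for $g_3, g_4, \ldots, g_{k+2}$, each coming from $\mathcal{T}_{C_\ell}$ with $\ell \geq 3$. In this chain, only the final equation---the one for the ``last'' recurrent cycle element $C_{k+2}$---introduces $g_1$, via the cyclic wrap-around of the recurrent cycle back to $C_1$. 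After all substitutions, every $g_\ell$ with $\ell \geq 2$ has been eliminated and $h_k$ emerges as the total coefficient of $g_1$.

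For $P = WB^{k+m+1}$, Proposition~\ref{prop:same-equation-WBB} guarantees that for every $\ell \geq 2$ the equation for $g_{\ell + m}$ is obtained from the equation for $g_\ell$ in $WB^{k+1}$ by shifting all indices by $m$ and, at worst, replacing the constant-term polynomial $B_\ell^{(k+1)}$ by some $B_{\ell+m}^{(k+m+1)}$. Hence the entire chain of equations that produced $h_k$ is replicated, shifted by $m$, through $g_{m+2}, g_{m+3}, \ldots, g_{k+m+2}$, with the final equation of the shifted chain still linking back to $g_1$ in exactly the same way. Running the same sequence of substitutions therefore yields an identity of the claimed form with the coefficient of $g_1$ unchanged, i.e.\ still equal to $h_k$.

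The only point requiring verification is that none of the intermediate equations for $g_\ell$ with $3 \leq \ell \leq k+1$ in $WB^{k+1}$ secretly references $g_1$ on its own, because the hypothesis $\ell > 1$ of Proposition~\ref{prop:same-equation-WBB} is exactly what makes the translation between the two systems clean; a hidden copy of $g_1$ appearing for some intermediate $\ell$ would fail to shift by $m$. This is immediate from inspection of the recurrent cycle elements listed at the start of Section~\ref{subsec:WBB}: an edge in some $\mathcal{T}_{C_\ell}$ that lands (possibly after a fuse) in the subtree rooted at $C_1$ arises only from the cyclic wrap-around at $\ell = k+2$. Beyond this straightforward bookkeeping check, the argument is a routine analog of Corollary~\ref{cor:same-f}, and I do not foresee any substantive obstacle.
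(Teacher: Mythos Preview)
Your proposal is correct and matches the paper's approach: the paper gives no explicit proof of this corollary, presenting it (together with Proposition~\ref{prop:same-equation-WBB}) as a direct analog of Corollary~\ref{cor:same-f}, which is precisely what you do. Your extra bookkeeping check---that only the final equation $g_{k+2}$ reintroduces $g_1$ so that the shift by $m$ behaves cleanly---is a reasonable point to make explicit, though the paper does not spell it out.
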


    \noindent Moreover, we also have an analogous recurrence.

    \begin{prop}\label{prop:WBB-recurrence}
        The coefficients $h_n$ satisfy the following recurrence
        \begin{align}\label{equa:h-recurrence}
            h_n = B + \begin{cases}
                \left(\sum_{i = 0}^{\frac{n-4}{2}}x^{2i+1}v_ih_{n-(2i+1)}\right) +  x^{n+1}v_{\frac{n}{2}} &~~\text{if $n$ is even}\\
                \left(\sum_{i = 0}^{\frac{n-3}{2}}x^{2i+1}v_ih_{n-(2i+1)}\right) + x^{n+1}v_{\frac{n-1}{2}} &~~\text{if $n$ is odd}
            \end{cases}
        \end{align}
        for some polynomial $B$.
    \end{prop}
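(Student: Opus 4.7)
The plan is to mirror the proof of Proposition~\ref{prop:BWW-recurrence} by reading off a single linear equation from the quasi-infinite tree $\mathcal{T}_{C_2}$ of $P = WB^{n+1}$ and then collapsing it via Corollary~\ref{cor:same-h}. Because $h_n$ was defined using $\mathcal{T}_{C_2}$ rather than $\mathcal{T}_{C_1}$, Proposition~\ref{prop:same-equation-WBB} applies verbatim to the branches coming out of $C_2$, which is precisely what lets us iterate as $n$ grows.

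First, I would catalog the branches out of $C_2 = (\overline{1},\overline{1},1,\ldots,2,0,1,\ldots)$. The two bars give the playable positions $R_1$ and $R_2$. Playing $R_1$ gives a subtree isomorphic to $\mathcal{T}_{C_3}$ and contributes $xv_0 g_3$. Playing $R_2$ triggers the pre-fuse at position $2$, which, as one pushes down the tail of $C_2$, develops a cascade of $k$-fuses of growing length: the $k$-th of these contributes $x^{2k+1}v_k$ and lands at $g_{3+2k}$, exactly as in the right half of Figure~\ref{fig:BWW_play_trees}. The cascade terminates when one reaches the lone $2$ in the tail; at that point there is a final $u_{n/2}$ (resp.\ $u_{(n+1)/2}$) term landing on $g_1$, according to the parity of $n$.

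Second, I would substitute $g_{2+j} = B_j + h_{n-j}\,g_1$ using Corollary~\ref{cor:same-h} into the equation for $g_2$, producing a relation of the form $g_2 = B + (\text{polynomial}) \cdot g_1$ in which the coefficient of $g_1$ is exactly the right-hand side of \eqref{equa:h-recurrence}. The two ``cap'' terms $x^{n}v_{(n-2)/2}\,g_{n+1}$ and $x^{n+1}u_{n/2}\,g_1$ (for even $n$; odd $n$ is analogous) combine via the identity $v_j + u_{j+1} = v_{j+1}$ together with $g_{n+2} = 1 + x v_0 g_1$ to produce the single closing term $x^{n+1}v_{n/2}$ of the stated recurrence. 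All contributions that do not involve $g_1$ are gathered into the polynomial $B$, whose exact form is irrelevant since only the coefficient of $g_1$ determines the denominator of $H_{WB^{k+1}}$.

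The main obstacle is bookkeeping: one must check that the cascade out of the pre-fuse at position $2$ in $C_2$ produces $k$-fuses of exactly the lengths claimed, with no spurious extra branch. This is where choosing $C_2$ over $C_1$ matters: $C_1 = (\overline{1},\overline{1},\overline{1},1,\ldots)$ has a third playable position whose branch would introduce an extra term $x^{n-2}v_{(n-4)/2}h_2$ by analogy with the $f_2$ term in \eqref{equa:f-recurrence}, and this discrepancy is precisely the ``minor difference'' already flagged in the definition of $h_n$. Once that distinction is respected, Proposition~\ref{prop:same-equation-WBB} makes the substitution step purely formal, and the parity split in \eqref{equa:h-recurrence} emerges from whether the terminating $2$ in the tail of $C_2$ sits at an even or odd offset from position $2$.
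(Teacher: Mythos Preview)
Your approach is essentially the paper's own: read off the linear equation from the relevant quasi-infinite tree, substitute using Corollary~\ref{cor:same-h}, and collapse the final two ``cap'' terms using $v_j + u_{j+1} = v_{j+1}$. The paper phrases this as analyzing the \emph{main branch} of $\mathcal{T}_{C_1}$ for $WB^n$ (Figure~\ref{fig:WBB_play_trees}) rather than $\mathcal{T}_{C_2}$ for $WB^{n+1}$, but by Proposition~\ref{prop:same-equation-WBB} these are the same equation after an index shift, so the two presentations are interchangeable.

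That said, your cap-term bookkeeping is off. In your frame (working in $WB^{n+1}$), the two final terms for even $n$ are $x^{n-1}v_{(n-2)/2}\,g_{n+1}$ and $x^{n}u_{n/2}\,g_{n+2}$, not $x^{n}v_{(n-2)/2}\,g_{n+1}$ and $x^{n+1}u_{n/2}\,g_1$ as you wrote; the second term lands on $g_{n+2}$, and it is only after substituting $g_{n+1} = 1 + x + x^2 g_1$ and $g_{n+2} = 1 + x g_1$ that both become multiples of $g_1$ and combine to $x^{n+1}v_{n/2}\,g_1$. Your aside that the extra branch of $C_1$ would contribute ``$x^{n-2}v_{(n-4)/2}h_2$ by analogy with the $f_2$ term'' is also not right---Proposition~\ref{prop:denom-WBB-h} shows its actual contribution is $x^{-1}(h_{n+1} - xh_n - x^3 v_1 h_{n-2})$---but your main point, that working with $C_2$ sidesteps it entirely, is correct and is exactly why $h_n$ was defined that way.
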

    \begin{proof}

        Once again, this proposition is best illustrated by a figure.
        \begin{figure}[h!]
            \centering
            \includegraphics[width = \textwidth]{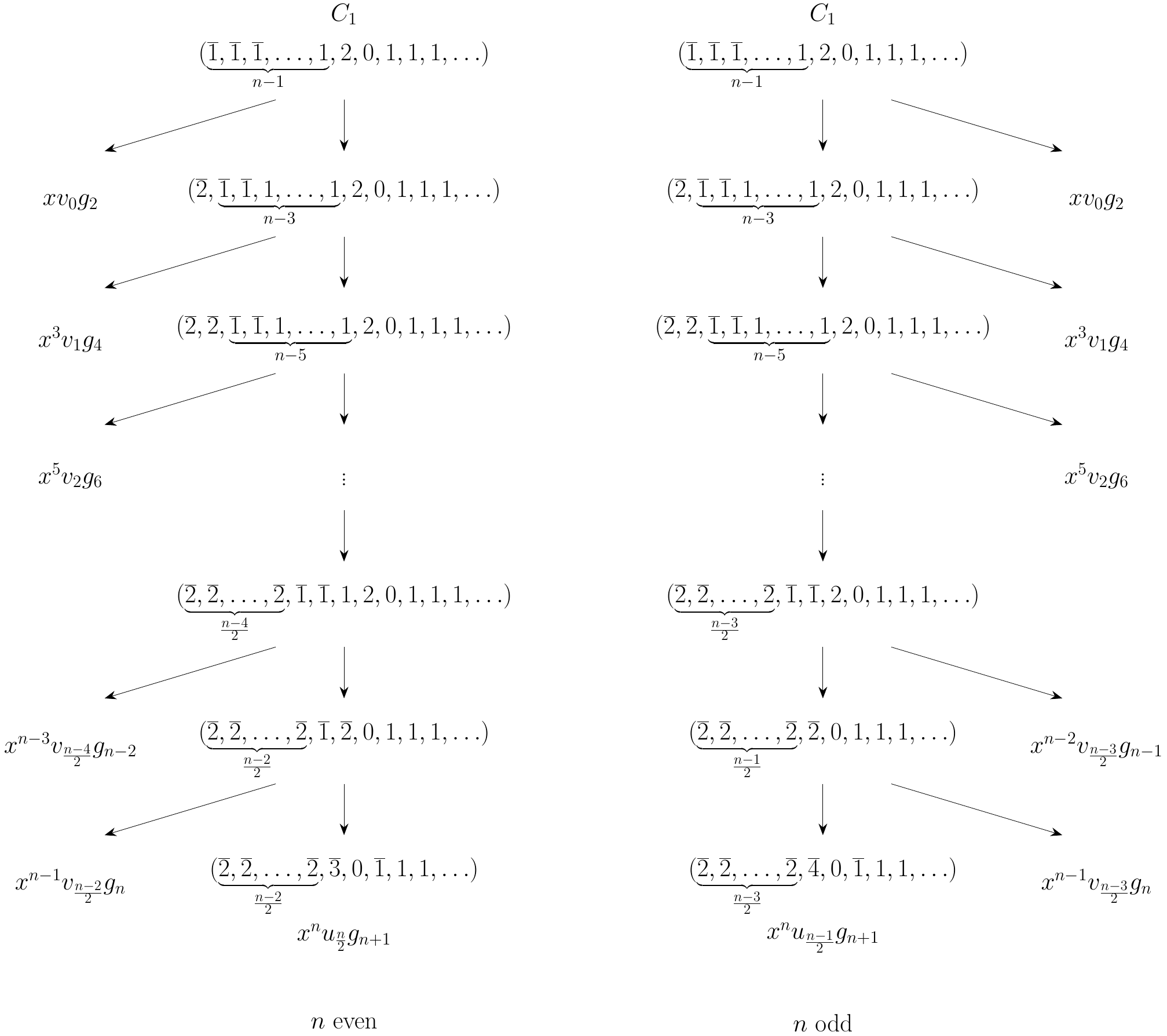}
            \caption{Main branch of $\mathcal{T}_{C_1}$ when $n$ is even (left) and odd (right)}
            \label{fig:WBB_play_trees}
        \end{figure}
    Figure \ref{fig:WBB_play_trees} shows the main branch of $\mathcal{T}_{C_1}$ for both cases. Note that in both cases, we have
        \[ g_n = 1 + xg_{n+1} = 1 + x + x^2g_1 \]
        and since $v_{i} + u_{i+1} = v_{i+1}$, we have
        \[ x^{n-1}v_{\frac{n-2}{2}}g_n + x^nu_{\frac{n}{2}}g_{n+1} = C + x^{n+1}v_{\frac{n}{2}}g_1 \]
        when $n$ is even, and
        \[ x^{n-1}v_{\frac{n-3}{2}}g_n + x^nu_{\frac{n-1}{2}}g_{n+1} = C + x^{n+1}v_{\frac{n-1}{2}}g_1 \]
        when $n$ is odd for some polynomial $C$. This gives equation \ref{equa:h-recurrence}.
    \end{proof}

    \begin{cor}\label{cor:deg_h_n}
        For all $n$, $h_n$ has degree $n+1$.
    \end{cor}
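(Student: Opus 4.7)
The plan is to prove the statement by strong induction on $n$, using the recurrence \eqref{equa:h-recurrence} from Proposition \ref{prop:WBB-recurrence}. This parallels the one-line proof of the analogous Corollary \ref{cor:deg_BWWW} for the $BW^k$ family. For the base case $n = 2$ (even), the recurrence has an empty sum and collapses to $h_2 = B + x^3 v_1$, so $\deg h_2 = 3$ provided the boundary polynomial $B$ has $x$-degree at most $2$; this follows by inspecting the substitutions $g_n = 1 + x + x^2 g_1$ and $g_{n+1} = 1 + x g_1$ used in the proof of Proposition \ref{prop:WBB-recurrence}.

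For the inductive step, I would assume $\deg h_m = m+1$ for all $2 \leq m < n$. In \eqref{equa:h-recurrence}, each summand $x^{2i+1} v_i h_{n-(2i+1)}$ then has $x$-degree $(2i+1) + 0 + (n-2i) = n+1$, where I use that the renormalized $u_k(x) = \sum_{i=0}^k c_{i,k-i} x^{i-k}$ is a Laurent polynomial with maximum power $x^0$, so $\deg v_k = 0$ as well. The isolated term $x^{n+1} v_{\lfloor n/2 \rfloor}$ also contributes degree $n+1$, while the same substitution analysis shows $\deg B \leq n$. To rule out cancellation of the leading $x^{n+1}$ coefficient, I would argue combinatorially: by Remark \ref{rem:composition} the coefficient of $x^0$ in $v_i$ equals $2^i > 0$, and by Proposition \ref{prop:u_n_formula} each $u_k$ (hence each $v_k$) has non-negative coefficients; combined with positivity of the leading coefficient of each $h_{n-(2i+1)}$ by induction, the $x^{n+1}$ coefficient of $h_n$ is a sum of products of strictly positive numbers, so no cancellation occurs and $\deg h_n = n+1$.

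The main obstacle, compared to the $BW^k$ case treated in Corollary \ref{cor:deg_BWWW}, is the bookkeeping for the auxiliary polynomial $B$: the equation for $g_1$ in $WB^{k+1}$ contains an ``extra branch'' from the $R_3$ move at the root, not present for larger necklaces, which is precisely why $h_k$ is defined via $g_2$ rather than $g_1$. Once one checks that the $B$ produced by the substitutions in Proposition \ref{prop:WBB-recurrence} has $x$-degree strictly less than $n+1$ — because $B$ only accumulates low-degree contributions from $g_n$ and $g_{n+1}$, together with terms from the already-substituted $g_3,\ldots,g_{n-1}$ whose degrees are controlled by the inductive hypothesis — the positivity argument above finishes the proof.
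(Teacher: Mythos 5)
Your proof is correct and follows essentially the same route as the paper: the paper's own argument is simply that the degree statement is immediate from the recurrence \eqref{equa:h-recurrence} together with the fact that each (normalized) $v_i$ has degree $0$, which is exactly the skeleton of your induction. Your additional care about the auxiliary polynomial $B$ and about ruling out cancellation via positivity of the coefficients of the $u_k$ (hence $v_k$) only makes explicit details the paper leaves implicit, so no change of method is involved.
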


    \begin{proof}
        This is immediate from equation (\ref{equa:h-recurrence}), knowing that $v_i$ has degree $0$ for all $i$.
    \end{proof}

    Before wrapping up this subsection, let us give the relationship between the coefficients $h_n$ and the denominators of $H_{WB^n}(x)$.

    \begin{prop}\label{prop:denom-WBB-h}
        For $n\geq 4$, define $p_n(x)$ by the condition that when using substitution to solve the system of equations for $WB^n$, one has
        \[ g_1 = A + p_ng_1 \]
        for some polynomial $A$. Thus, $H_{WB^n}$ can be written as a generating function over $p_n-1$. Then,
        \[ p_n = x^{-1}h_{n+1} - x^2v_1h_{n-2}. \]
    \end{prop}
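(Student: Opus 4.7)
The plan is to establish the identity by writing out the substitution equation for $g_1$ in $WB^n$ and matching it to the result of processing the $h_{n+1}$ recurrence from Proposition~\ref{prop:WBB-recurrence}. The first step is to analyze the tree $\mathcal{T}_{C_1}$ in $WB^n$. Since $C_1 = (\overline{1}, \overline{1}, \overline{1}, 1, \ldots, 2, 0, \ldots)$ has exactly three playable parts, the equation for $g_1$ decomposes into three branches: the $R_1$ branch contributing $xv_0\, g_2$, the main branch from $R_2$ (which descends through successive fuses and eventually lands on $g_4, g_5, \ldots, g_{n+1}$), and the extra branch from $R_3$. Each branch, via Corollary~\ref{cor:k-sep-genfunc}, contributes terms of the form $x^k v_i g_\ell$ or $x^k u_i g_\ell$, with the precise coefficient list generalizing the $WB^4$ and $WB^5$ examples tabulated in Section~\ref{subsec:WBB}.

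Second, I would apply Corollary~\ref{cor:same-h} to substitute each $g_j = B_j + h_{n-j+1}\, g_1$ for $2 \le j \le n+1$, using the base cases $h_0 = x$ (from $g_{n+1} = 1 + xg_1$) and $h_1 = x^2$ (from $g_n = 1 + x + x^2 g_1$, consistent with the computation in the proof of Proposition~\ref{prop:WBB-recurrence}). This expresses $p_n$ as a $\mathbb{Z}[x]$-linear combination of $h_{n-1}, h_{n-3}, \ldots$ coming from the main branch (and the $xv_0\, g_2$ term), plus a separate extra-branch contribution.

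Third, I would expand $x^{-1} h_{n+1}$ using Proposition~\ref{prop:WBB-recurrence}: dividing the recurrence
\[
h_{n+1} = B + \sum_{i \ge 0} x^{2i+1} v_i\, h_{n-2i} + (\text{terminal } v\text{-term})
\]
by $x$ produces a sum whose $i=0$ term is $h_n$ and whose $i=1$ term is exactly $x^2 v_1 h_{n-2}$. Subtracting $x^2 v_1 h_{n-2}$ therefore removes that term, leaving a sum which I would match, term-by-term, with the substituted $p_n$ from the second step.

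The main obstacle is the tree analysis in the first step for general $n$. The conceptual content is that the extra branch from $R_3$ first triggers a $1$-fuse (contributing a factor $x^2 v_1$) above an element whose subtree is isomorphic, by Proposition~\ref{prop:k-sep-isomorphic}, to a structure whose substitution coefficient turns out to be $h_{n-2}$; this is what produces the $-x^2 v_1 h_{n-2}$ correction, with the sign arising because this extra branch overlaps the would-be ``clean'' $x^{-1} h_{n+1}$ expansion. The matching against Proposition~\ref{prop:WBB-recurrence} uses the identity $v_i + u_{i+1} = v_{i+1}$ to collapse terminal $v_i g_j + u_{i+1} g_{j+1}$ pairs into single $v_{i+1} g_1$ contributions, exactly as was done in the proof of Proposition~\ref{prop:WBB-recurrence}, so that the $B$-polynomial remainders line up consistently with the $B_j$'s produced in the substitution of Step~2.
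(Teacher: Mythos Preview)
Your overall plan (write out the equation for $g_1$, substitute $g_j \mapsto h_{n-j+1}\,g_1$, then match against $x^{-1}h_{n+1}$ expanded via Proposition~\ref{prop:WBB-recurrence}) is a reasonable computational strategy and is different from the paper's argument. It could be made to work, but the step you identify as ``the main obstacle'' is in fact not handled correctly in your sketch, and this is where the real content lies.

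Specifically, your description of the extra $R_3$-branch is wrong. Playing $R_3$ on $C_1=(\overline{1},\overline{1},\overline{1},1,\ldots)$ produces $(\overline{1},\overline{2},\overline{2},\ldots)$, which is a $2$-pre-fuse, not a $1$-fuse; and the extra branch does \emph{not} contribute $x^2 v_1 h_{n-2}$ to $p_n$. (Check $n=4$: the extra branch gives $x^3 v_1 g_4 + x^4 u_2 g_5 \mapsto x^5 v_2$, while $x^2 v_1 h_2 = x^5 v_1^2$.) The term $-x^2 v_1 h_{n-2}$ in the final formula is a cancellation artifact, not a direct tree contribution, so your ``sign from overlap'' heuristic does not describe what is actually happening. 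To push your approach through you would still need an explicit general formula for the $R_3$-branch contribution and then to verify it equals $\sum_{i\ge 2} x^{2i} v_i h_{n-2i} + (\text{terminal } v\text{-term})$, which you have not done.

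The paper avoids this computation entirely with a single combinatorial observation: every element in the extra branch has the form $(\overline{1},\overline{2},\overline{2},\ldots)$, and replacing the leading $\overline{1}$ by $\overline{2}$ (which does not affect any fuse or pre-fuse structure, hence preserves all weights) turns it into an element of the \emph{main} branch of $C_1$ in $WB^{n+1}$. Under this identification the extra branch is exactly that main branch with its two topmost degenerate leaves $xv_0 g_2$ and $x^3 v_1 g_4$ removed, shifted by $x^{-1}$. Since the main branch of $WB^{n+1}$ contributes $h_{n+1}$ and the removed leaves contribute $xv_0 h_n + x^3 v_1 h_{n-2}$, the extra branch gives $x^{-1}(h_{n+1} - x h_n - x^3 v_1 h_{n-2})$. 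Adding the main-branch contribution $h_n$ and using $v_0=1$ collapses everything to $p_n = x^{-1}h_{n+1} - x^2 v_1 h_{n-2}$ in one line. This $\overline{1}\to\overline{2}$ identification is the idea you are missing.
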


    \begin{proof}
        Note that $p_n$ satisfies
        \[ g_1 = B + p_ng_1 \]
        for some polynomial $B$ when solving the system of equations for $WB^n$. Let us compute $p_n$. The main branch of $C_1$ contributes $h_1$ to $p_n$. As for the extra branch, observe that the elements of this branch has the form $(\overline{1},\overline{2},\overline{2},\ldots)$. If we replace the first part $\overline{1}$ by $\overline{2}$, we get the elements in the main branch of $C_1$ in $WB^{n+1}$. Note that this replacement does not change the weight since it does not change the pre-fuses and fuses. Thus, the elements in the extra branch of $C_1$ in $WB^n$ are those in the main branch of $C_1$ in $WB^{n+1}$, with a few top elements missing, namely $xv_0g_2$ and $x^3v_1g_4$ (both in $WB^{n+1}$). Thus, the extra branch contributes $x^{-1}(h_{n+1}-xv_0h_n - x^3v_1h_{n-2})$, where the coefficient $x^{-1}$ is needed to shift the exponents. Since $v_0 = 1$, we have
        \[ p_n = h_n + x^{-1}(h_{n+1}-xv_0h_n - x^3v_1h_{n-2}) = x^{-1}h_{n+1} - x^2v_1h_{n-2}. \]
    \end{proof}

    \begin{cor}\label{cor:deg_WBBB}
        For all $n$, $p_n$ has degree $n+1$.
    \end{cor}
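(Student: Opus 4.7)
The strategy is to deduce the claim from Proposition \ref{prop:denom-WBB-h}, which gives $p_n = x^{-1} h_{n+1} - x^2 v_1 h_{n-2}$. By Corollary \ref{cor:deg_h_n} each $h_m$ has degree $m+1$, and in the normalized convention $v_1$ has degree zero, so both summands are polynomials of degree at most $n+1$. Hence $\deg p_n \le n+1$ is immediate, and the content of the corollary is noncancellation: the coefficients of $x^{n+1}$ in the two summands must differ.

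To check noncancellation, I would compute the leading coefficient $\ell_m := [x^{m+1}] h_m$ of each $h_m$. The constant term of $v_k(x)$ (its coefficient of $x^0$) equals $\sum_{j=0}^{k} c_{j,0}$ by Proposition \ref{prop:u_n_formula}, where $c_{j,0}$ counts strong compositions of $j$; this sum equals $2^k$ (using $c_{0,0}=1$ and $c_{j,0}=2^{j-1}$ for $j\ge 1$, as in Remark \ref{rem:composition}). Extracting the coefficient of $x^{m+1}$ from both sides of recurrence \eqref{equa:h-recurrence} then yields a purely positive linear recursion in the $\ell_m$'s, and a short induction starting from $\ell_2=2$ produces the clean closed form $\ell_m = 2^{m-1}$ for all $m\ge 2$.

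Plugging this into the formula of Proposition \ref{prop:denom-WBB-h}, the coefficient of $x^{n+1}$ in $p_n$ is $\ell_{n+1} - 2\,\ell_{n-2} = 2^n - 2\cdot 2^{n-3} = 3\cdot 2^{n-2}$, which is nonzero, so $\deg p_n = n+1$. The only real obstacle is pinning down $\ell_m = 2^{m-1}$: without it, one only knows $\deg p_n \le n+1$, and the entire point of the corollary is to rule out accidental cancellation of the leading terms of $x^{-1} h_{n+1}$ and $x^2 v_1 h_{n-2}$. Once the leading coefficients of the $h_m$ are known to grow exactly like $2^{m-1}$, the dominant term of $x^{-1}h_{n+1}$ has size $2^n$, which strictly exceeds the $2\cdot 2^{n-3}$ coming from $x^2 v_1 h_{n-2}$, so no cancellation is possible and the degree is exactly $n+1$.
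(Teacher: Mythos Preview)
Your approach follows the paper's: both start from the identity $p_n = x^{-1}h_{n+1} - x^2 v_1 h_{n-2}$ of Proposition~\ref{prop:denom-WBB-h} together with $\deg h_m = m+1$ from Corollary~\ref{cor:deg_h_n}. The paper stops there and declares the result ``immediate,'' but you correctly observe that since $v_1$ has degree $0$ in the normalized convention, \emph{both} summands have degree exactly $n+1$, so one only gets $\deg p_n \le n+1$ for free and noncancellation of the leading terms must be checked. Your computation of the leading coefficients $\ell_m = [x^{m+1}]h_m = 2^{m-1}$ fills this gap: the induction you sketch is routine (using $[x^0]v_k = 2^k$ and the base values $h_2 = x^3 v_1$, $h_3 = 2x^4 v_1$ recorded later in the paper, the recurrence \eqref{equa:h-recurrence} yields $\ell_m = 2^{m-1}$ as claimed), and then the leading coefficient of $p_n$ is $2^n - 2\cdot 2^{n-3} = 3\cdot 2^{n-2} \neq 0$. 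In short, your argument is the paper's argument made rigorous.
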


    \begin{proof}
        This is immediate from the equation
        \[ p_n = x^{-1}h_{n+1} - x^2v_1h_{n-2} \]
        because $h_{n+1}$ has degree $n+2$ and $h_{n-2}$ has degree $n-1$ (by Corollary \ref{cor:deg_h_n}).
    \end{proof}

    By Corollary \ref{cor:deg_WBBB}, for all $n$, $H_{WB^n}$ can be written as a generating function over $p_n-1$, which is a polynomial of degree $n+1$.

\subsection{Proof of Theorem \ref{thm:BWW_WBB}}\label{subsec:proof-1.2}

    Now we are ready to prove Theorem \ref{thm:BWW_WBB}.

    \begin{repthm}{1.3}
        For all $k \geq 1$, the functions $H_{BW^k}(x)$ and $H_{WB^k}(x)$ can both be written over the same denominator which is a polynomial of degree $k+1$.
    \end{repthm}

    \begin{proof}

        By Corollary \ref{cor:deg_BWWW}, $H_{BW^k}$ can be written as a generating function over $f_k-1$, which is a polynomial of degree $k+1$. By Corollary \ref{cor:deg_WBBB}, $H_{WB^k}$ can be written as a generating function over $p_k-1$, which is also a polynomial of degree $k+1$. Hence, it suffices to prove that $f_k = p_k$ for all $k$.

        For the base cases, when $k = 1$, $BW$ and $WB$ are the same necklace, so $H_{BW} = H_{WB}$. For $k = 2$, Pham in \cite{pham2022limiting} (and Theorem \ref{thm:BWB_WBW}) showed that $H_{BWW} = H_{WBB}$. For $k = 3$, the example at the beginning of this section showed that $H_{BWWW}$ and $H_{WBBB}$ can both be written as a generating function over $6x^4 + 4x^3 + x^2 -1$, which is a polynomial of degree $4$. In particular, $p_k = f_k$ for $k \leq 3$. Thus, it suffices to prove that $p_k$ satisfies equation \eqref{equa:f-recurrence} for $k\geq 4$.

        \begin{itemize}
            \item Case 1: $k$ is even. We need to check
            \[ p_k = \left(\sum_{i = 0}^{\frac{k-4}{2}}x^{2i+1}v_ip_{k-(2i+1)}\right) + x^{k-2}v_{\frac{k-4}{2}}p_2 + x^{k+1}v_{\frac{k}{2}}. \]
            Substituting $p_i = x^{-1}h_{i+1} - x^2v_1h_{i-2}$ for $i\geq 4$, this is equivalent to
            \begin{align*}
                x^{-1}h_{k+1} - x^2v_1h_{k-2} &= \left(\sum_{i = 0}^{\frac{k-6}{2}}x^{2i+1}v_i \left(x^{-1}h_{k-2i} - x^2v_1h_{k-2i-3}\right) \right)\\
                &+x^{k-3}v_{\frac{k-4}{2}}p_3 + x^{k-2}v_{\frac{k-4}{2}}p_2 + x^{k+1}v_{\frac{k}{2}}.
            \end{align*}
            From equation \ref{equa:h-recurrence}, we have
            \[ h_{k+1} = \left(\sum_{i = 0}^{\frac{k-6}{2}}x^{2i+1}v_ih_{k-2i}\right) + x^{k-3}v_{\frac{k-4}{2}}h_4 + x^{k-1}v_{\frac{k-2}{2}}h_2 + x^{k+2}v_{\frac{k}{2}} \]
            and
            \[ h_{k-2} = \left(\sum_{i = 0}^{\frac{k-6}{2}}x^{2i+1}v_ih_{k-2i-3}\right) +  x^{k-1}v_{\frac{k-2}{2}}. \]
            Thus, it suffices to check
            \[ x^{-1}\left( x^{k-3}v_{\frac{k-4}{2}}h_4 + x^{k-1}v_{\frac{k-2}{2}}h_2 + x^{k+2}v_{\frac{k}{2}} \right) - x^2v_1\cdot x^{k-1}v_{\frac{k-2}{2}} \]
            \[ = x^{k-3}v_{\frac{k-4}{2}}p_3 + x^{k-2}v_{\frac{k-4}{2}}p_2 + x^{k+1}v_{\frac{k}{2}}. \]
            
            \noindent Fortunately, this can be checked by direct computation. We have $h_4 = x^5(2v_1 + v_2)$, $h_2 = x^3v_1$, $p_3 = x^4(v_1+v_2)$, and $p_2 = x^3v_1$. Hence,
            \begin{align*}
                \text{LHS} &= x^{k+1}v_{\frac{k-4}{2}}(2v_1 + v_2) + x^{k+1}v_{\frac{k-2}{2}}v_1 + x^{k+1}v_{\frac{k}{2}} - x^{k+1}v_{\frac{k-2}{2}}v_1\\
                &= x^{k+1}v_{\frac{k-4}{2}}(v_1 + v_2) + x^{k+1}v_{\frac{k-4}{2}}v_1 + x^{k+1}v_{\frac{k}{2}}\\
                &= \text{RHS}.
            \end{align*}

            \item Case 2: $k$ is odd. We need to check
            \[ p_k = \left(\sum_{i = 0}^{\frac{k-3}{2}}x^{2i+1}v_ip_{k-(2i+1)}\right) + x^{k+1}v_{\frac{k+1}{2}} \]
            Substituting $p_i = x^{-1}h_{i+1} - x^2v_1h_{i-2}$ for $i\geq 4$, this is equivalent to
            \begin{align*}
                x^{-1}h_{k+1} - x^2v_1h_{k-2} &= \left(\sum_{i = 0}^{\frac{k-5}{2}}x^{2i+1}v_i \left(x^{-1}h_{k-2i} - x^2v_1h_{k-2i-3}\right) \right)\\
                &+x^{k-2}v_{\frac{k-3}{2}}p_2  + x^{k+1}v_{\frac{k+1}{2}}.
            \end{align*}
            From equation \ref{equa:h-recurrence}, we have
            \[ h_{k+1} = \left(\sum_{i = 0}^{\frac{k-5}{2}}x^{2i+1}v_ih_{k-2i}\right) + x^{k-2}v_{\frac{k-3}{2}}h_3 + x^{k+2}v_{\frac{k+1}{2}} \]
            and
            \[ h_{k-2} = \left(\sum_{i = 0}^{\frac{k-5}{2}}x^{2i+1}v_ih_{k-2i-3}\right) +  x^{k-1}v_{\frac{k-3}{2}}. \]
            Thus, it suffices to check
            \[ x^{-1}\left( x^{k-2}v_{\frac{k-3}{2}}h_3 + x^{k+2}v_{\frac{k+1}{2}} \right) - x^2v_1\cdot x^{k-1}v_{\frac{k-3}{2}} \]
            \[ = x^{k-2}v_{\frac{k-3}{2}}p_2  + x^{k+1}v_{\frac{k+1}{2}}. \]
            Again, by manual computation, we have $h_3 = 2x^4v_1$ and $p_2 = x^3v_1$. Hence,
            \begin{align*}
                \text{LHS} &= 2x^{k+1}v_{\frac{k-3}{2}}v_1 + x^{k+1}v_{\frac{k+1}{2}} - x^{k+1}v_{\frac{k-3}{2}}v_1\\
                &= x^{k+1}v_{\frac{k-3}{2}}v_1 + x^{k+1}v_{\frac{k+1}{2}}\\
                &= \text{RHS}.
            \end{align*}
        \end{itemize}

        Therefore, $p_k$ satisfies equation \eqref{equa:f-recurrence}, so the proof is complete.
    \end{proof}

\section{Discussion}\label{sec:discussion}

    As mentioned in the introduction, Theorem \ref{thm:BWW_WBB} is a special case of Conjecture \ref{con:P-and-P-dual} 
    on the duality operation for primitive necklaces $P \mapsto P^*$, since the dual of $BW^k$ is $B^kW = WB^k$. Theorem \ref{thm:BWB_WBW} is also a special case of this conjecture, but the two families $B(WB)^k$ and $W(BW)^k$ also have a stronger property that $H_{B(WB)^k(x)}$ and $H_{W(BW)^k}(x)$ are the same. We hope that our new representation and the combinatorial interpretation of $k$-fuses may lead to a proof of the conjecture. Furthermore, our proof of Theorem \ref{thm:BWW_WBB} is computationally heavy and is not combinatorial, so a new combinatorial proof of Theorem \ref{thm:BWW_WBB} may shed light on a proof of the general conjecture.

    Pham proposed another nice conjecture about the size of the {\it finite} Bulgarian solitaire orbits $\mathcal{O}_{P^k}$ for primitive necklaces $P$.

    \begin{conjecture}
        For any primitive necklace $P$ with $|P| \geq 3$, there is an integer $c_P$ such that for all $k$,
        \[ |\mathcal{O}_{P^k}| = c_P^{k-1}|\mathcal{O}_{P}|. \]
    \end{conjecture}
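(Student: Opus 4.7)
The plan is to prove the conjecture by induction on $k$, establishing a recursive relation $|\mathcal{O}_{P^{k+1}}|=c_P\,|\mathcal{O}_{P^k}|$ for some integer $c_P$ depending only on $P$. Using the tree decomposition $\mathcal{O}_{P^k}=\bigsqcup_{i=1}^n\mathcal{T}_{C_i^{(k)}}$ with $n=|P|$, the natural approach is to construct an explicit $c_P$-to-one surjection $\Phi_k:\mathcal{O}_{P^{k+1}}\twoheadrightarrow\mathcal{O}_{P^k}$, or equivalently a bijection $\mathcal{O}_{P^{k+1}}\leftrightarrow\mathcal{O}_{P^k}\times S_P$ with $|S_P|=c_P$. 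The candidate map comes from the $\mu$-representation of Section~\ref{sec:setup}: a recurrent cycle element $C_i^{(k+1)}$ is obtained from $C_i^{(k)}$ by inserting one additional copy of the length-$n$ recurrent block of Lemma~\ref{lem:tail-P}, and one would hope this insertion extends consistently to every $\mu\in\mathcal{T}_{C_i^{(k+1)}}$, giving a map back to $\mathcal{T}_{C_i^{(k)}}$ whose fibers all have the same size $c_P$.

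The first concrete step is to encode each vertex $\mu\in\mathcal{T}_{C_i^{(k+1)}}$ as a pair consisting of a ``base'' vertex $\mu^{\flat}\in\mathcal{T}_{C_i^{(k)}}$ together with local data describing how $\mu$ sits relative to $\mu^{\flat}$. Next, using Lemma~\ref{lem:bs_move_lim} and the fuse/pre-fuse framework of Section~\ref{sec:fuse}, one quantifies this local data: each fuse produced by the additional inserted block should contribute a definite integer factor, namely $u_k(1)$ via Proposition~\ref{prop:u_n_formula}, which equals the total number of weak compositions of $k$. Summing these contributions over the $n$ trees should yield $c_P$ and, with luck, an explicit formula in terms of a combinatorial invariant of $P$, such as the number of $BW$-transitions in any word representing $P$, or a weighted statistic on the associated proper tail. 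I would verify such a formula against the small cases visible in Figures~\ref{fig:forest_1}--\ref{fig:forest_3} for $P=BWW$, and against at least one asymmetric example such as $P=BBW$, before attempting the general bijection.

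The chief difficulty is that the ``new'' vertices in $\mathcal{T}_{C_i^{(k+1)}}$ relative to $\mathcal{T}_{C_i^{(k)}}$ live deep in the tree, past the stable top region described by Pham, whereas the fuse framework of Section~\ref{sec:fuse} as developed here studies the branching hanging off a single $k$-fuse rather than long sequences of fuses along a root-to-leaf path. To close the argument one would likely need to generalize Propositions~\ref{prop:k-sep-isomorphic} and~\ref{prop:iso-k-branches} to cascades of fuses stretching through the depth of the tree, or to describe the tree globally in terms of a sequence of fuse invariants along root-to-leaf paths. Even after such a generalization, pinning down $c_P$ in closed form may require a case analysis comparable to Sections~\ref{sec:thm-1.1} and~\ref{sec:thm-1.2}, since the way the extra block of length $n$ interacts with the existing fuses depends sensitively on the internal pattern of $B$'s and $W$'s in $P$; indeed the sharp contrast between $|\mathcal{O}_{BWW}|$ and $|\mathcal{O}_{BBW}|$ in small cases already suggests that no uniform formula for $c_P$ will be immediate from the representation alone.
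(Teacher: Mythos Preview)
The statement you are attempting to prove is presented in the paper as an \emph{open conjecture} (in Section~\ref{sec:discussion}); the paper does not prove it, and indeed only verifies it numerically for small necklaces in the Appendix. So there is no ``paper's own proof'' to compare against.

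Your proposal is not a proof but a research outline, and you essentially acknowledge this yourself with phrases like ``one would hope,'' ``with luck,'' and ``one would likely need to generalize.'' The central gap is precisely the one you identify in your final paragraph: there is no construction of the map $\Phi_k$, no verification that its fibers have constant size, and no argument that this size is independent of $k$. The fuse machinery of Section~\ref{sec:fuse} is designed for the \emph{limiting} system $\BSlimit$ and describes local branching multiplicatively via $u_k(x)$; transferring this to exact counts in the \emph{finite} orbits $\mathcal{O}_{P^k}$ is a genuinely different problem, because the finite trees have boundary effects (the ``$+1$'' corrections in Lemma~\ref{lem:bs_move_result} cases (1)--(3) that vanish in Lemma~\ref{lem:bs_move_lim}) which do not obviously cancel. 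Your suggestion that $c_P$ might be expressible via $u_k(1)$ or a $BW$-transition count is already contradicted by the data tables: for instance $c_{BBWW}=10$ while $c_{BWWW}=c_{BBBW}=15$, yet all three necklaces have the same number of $BW$-transitions, and the $u_k(1)$ values do not distinguish them either.

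In short, what you have written is a reasonable sketch of where one might start, but it does not constitute a proof, and the paper offers none to compare it to.
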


    If such $c_P$ exist, then there is an even more beautiful conjecture.

    \begin{conjecture}
        For any primitive necklace $P$ such that $c_{P}$ and $c_{P^*}$ both exist,
        \[ c_P = c_{P^*}. \]
    \end{conjecture}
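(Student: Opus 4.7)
The approach is to reduce the identity $c_P = c_{P^*}$ to Conjecture~\ref{con:P-and-P-dual} by showing that $c_P$ can be extracted from the denominator polynomial of $H_P(x)$. Following the transfer-matrix--style strategy already visible in Section~\ref{sec:thm-1.2}, where each finite orbit $\mathcal{O}_{P^k}$ is described by a linear system in the tree generating functions $g_i^{(k)}(x)$, the first goal is to express the step $k \to k{+}1$ as the action of a fixed linear operator $T_P(x)$, depending only on the primitive necklace $P$, on a vector carrying $(g_1^{(k)},\ldots,g_n^{(k)})$ together with possibly a few auxiliary quantities. One would then hope that the denominator of $H_P(x)$ appears as (a scalar multiple of) the characteristic polynomial of $T_P(x)$, a statement consistent with the observation in Section~\ref{sec:thm-1.2} that the same polynomial $f_k - 1 = p_k - 1$ controls both $H_{BW^k}$ and $H_{WB^k}$ through a recursion that looks very much like a companion matrix.

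\textbf{From the operator to the conclusion.} Granted such a transfer operator, one has a closed form $|\mathcal{O}_{P^k}| = \mathbf{1}^T\, T_P(1)^{k-1}\, \vec{v}$ for a fixed initial vector $\vec v$, and the strict geometric behaviour $|\mathcal{O}_{P^k}| = c_P^{k-1}|\mathcal{O}_P|$ demanded by the preceding conjecture forces $c_P$ to be an eigenvalue of $T_P(1)$, with $\vec v$ the corresponding eigenvector. Since Conjecture~\ref{con:P-and-P-dual} asserts that the denominators of $H_P$ and $H_{P^*}$ coincide, the characteristic polynomials of $T_P(x)$ and $T_{P^*}(x)$ would then match; specializing at $x=1$, the spectra, and in particular the relevant eigenvalue, of $T_P(1)$ and $T_{P^*}(1)$ would coincide, and $c_P = c_{P^*}$ would follow.

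\textbf{Main obstacle and fallback.} The hard part will be producing a genuinely autonomous linear recursion in $k$ with coefficients independent of $k$. The recurrences for $BW^k$ and $WB^k$ in Section~\ref{sec:thm-1.2} have shapes that mildly depend on the parity of $k$, and for more complicated necklaces one expects further case analysis governed by where $j$-fuses and $j$-pre-fuses sit inside each recurrent element $C_i^{(k)}$. I would try to absorb these irregularities either by enlarging the state vector (say tracking $(g^{(k)}, g^{(k+1)}, \ldots, g^{(k+m)})$ for an $m$ depending only on $|P|$) or by formulating a substitution rule at the level of the quasi-infinite forest $\mathcal{F}_{P^k}$ that makes the transition $\mathcal{F}_{P^k}\hookrightarrow \mathcal{F}_{P^{k+1}}$ into a clean linear operation on generating functions. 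A secondary obstacle, which would have to be acknowledged explicitly, is that the resulting argument is doubly conditional: the existence of $c_P$ is itself the content of the preceding conjecture, and the denominator duality is still Pham's open conjecture, so a truly unconditional proof would need at least the existence of $c_P$ to be settled along the way. If the transfer-operator programme proves too rigid, a combinatorial fallback is to look for a weight-preserving bijection between the ``new level-$k$'' elements $\mathcal{O}_{P^{k+1}}\setminus \mathcal{O}_{P^k}$ and $c_P$ copies of $\mathcal{O}_{P^k}\setminus\mathcal{O}_{P^{k-1}}$, which would also imply the geometric growth and, via an involution swapping $P$ and $P^*$ on this bijection, the desired equality.
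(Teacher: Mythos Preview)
The statement you are attempting is listed in the paper as an open \emph{conjecture}; the paper offers no proof, so there is nothing to compare your argument against. What I can do is flag where your plan breaks.

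Your central step is to ``extract $c_P$ from the denominator polynomial of $H_P(x)$'' and then invoke Conjecture~\ref{con:P-and-P-dual}. The paper's own data already rules this out: in Section~\ref{sec:discussion} it is noted that $H_{BWBWBWB}(x)=H_{BWBBWWW}(x)$ identically (so in particular the denominators coincide), yet $c_{BWBWBWB}=63\neq 94=c_{BWBBWWW}$. Hence $c_P$ is not a function of the denominator of $H_P$, nor even of $H_P$ itself, and no argument of the shape ``same denominator $\Rightarrow$ same spectrum $\Rightarrow$ same $c_P$'' can succeed. Even granting a transfer operator $T_P(x)$ with the properties you describe, the equality of characteristic polynomials would only tell you that $c_P$ and $c_{P^*}$ lie in the same multiset of eigenvalues of $T_P(1)$, not that they are the \emph{same} eigenvalue; you would need an additional uniqueness statement (Perron--Frobenius, simplicity, positivity of the eigenvector) that you have not supplied and that the example above shows cannot come from the denominator alone.

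There is also a conflation of two different parameters called $k$. The recursions in Section~\ref{sec:thm-1.2} pass from the primitive necklace $BW^k$ to the primitive necklace $BW^{k+1}$ (varying the \emph{length} of $P$), whereas the quantity $c_P$ governs the passage from $\mathcal{O}_{P^k}$ to $\mathcal{O}_{P^{k+1}}$ for a \emph{fixed} primitive $P$ (varying the \emph{power}). The ``companion matrix'' structure you see in the $f_k$ recursion is about the former, not the latter, so it does not directly furnish the operator $T_P$ you want. Finally, even if both obstacles were overcome, your argument would remain conditional on Conjecture~\ref{con:P-and-P-dual}, which is itself open; the paper only establishes it for the families $B(WB)^k$ and $BW^k$.
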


    A special case was proved in by Pham in her thesis.

    \begin{thm}
        For all $k$,
        \[ |\mathcal{O}_{(BWW)^k}| = 5^k \]
        and
        \[ |\mathcal{O}_{(BBW)^k}| = 7\cdot5^{k-1}. \]
        Thus,
        \[ c_{BWW} = c_{BBW} = 5. \]
    \end{thm}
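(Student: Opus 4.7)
The plan is to proceed by induction on $k$. For the base case $k=1$, both identities follow by direct enumeration: when $n=4$ one has $\binom{3}{2}\le n<\binom{4}{2}$ and the only primitive length-$3$ necklace with one $B$ is $BWW$, so $\mathcal{O}_{BWW}$ exhausts all partitions of $4$, giving $|\mathcal{O}_{BWW}|=p(4)=5$; similarly $\mathcal{O}_{BBW}$ is the unique orbit corresponding to length-$3$ necklaces with two $B$'s, so it exhausts all partitions of $5$, giving $|\mathcal{O}_{BBW}|=p(5)=7$.

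For the inductive step it suffices to establish the single multiplicative recurrence
\[
|\mathcal{O}_{P^{k+1}}| \;=\; 5\,|\mathcal{O}_{P^k}|
\qquad \text{for } P\in\{BWW,\,BBW\}.
\]
I would prove this by constructing, for each of the two necklaces, an explicit $5$-to-$1$ map $\Phi_P:\mathcal{O}_{P^{k+1}}\to\mathcal{O}_{P^k}$ using the new $\mu$-representation of Section~\ref{subsec:new_rep}. The idea is that, by Pham's convergence result and the bijection of Section~\ref{subsec:BSlimit}, every $\mu$-sequence in $\mathcal{O}_{P^{k+1}}$ has a tail consisting of $k+1$ cyclic copies of the period-$3$ block encoding $P$ (a rotation of $(2,1,0)$ for $P = BWW$, respectively of $(1,2,0)$ for $P = BBW$, via formula \eqref{eqn:mu-from-b}). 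Deleting one such period block from the tail and readjusting the bars according to Lemma~\ref{lem:valid-mu} should produce a well-defined element of $\mathcal{O}_{P^k}$, and conversely each $\nu\in\mathcal{O}_{P^k}$ can be lifted to $\mathcal{O}_{P^{k+1}}$ by reinserting a period block in one of several admissible positions.

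The main obstacle will be verifying that each fiber of $\Phi_P$ has cardinality exactly $5$. Not every reinsertion of a period block yields a sequence that actually belongs to $\mathcal{O}_{P^{k+1}}$; the valid reinsertions are constrained by the local bar pattern at the junction where the transient portion of $\nu$ meets its cyclic tail. I would control this junction using the $k$-fuse and $k$-pre-fuse framework of Section~\ref{sec:fuse} together with the weak-composition interpretation of $u_k(x)$ from Proposition~\ref{prop:u_n_formula}: the local configuration should fall into one of a small finite list of types, and a direct check would show that each type admits exactly five admissible reinsertions. Establishing this common count of five both for $P=BWW$ and for $P=BBW$ would simultaneously prove both formulas and furnish the asserted shared constant $c_{BWW}=c_{BBW}=5$, providing the indicated instance of Pham's dual-necklace conjecture $c_P=c_{P^*}$ at the end of Section~\ref{sec:discussion}.
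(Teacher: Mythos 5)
First, note that the paper does not prove this statement at all: it is quoted from Pham's thesis as an external result, so there is no internal proof to compare your argument against. Your base case is fine -- for $n=4$ (resp.\ $n=5$) the only length-$3$ necklace with one (resp.\ two) $B$'s is $BWW$ (resp.\ $BBW$), so the single orbit is all of the partitions and $p(4)=5$, $p(5)=7$.

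The inductive step, however, has two genuine gaps. First, the structural premise is false: it is not true that every $\mu$-sequence in the \emph{finite} orbit $\mathcal{O}_{P^{k+1}}$ carries a tail of $k+1$ cyclic copies of the period-$3$ block. The tail of $\mu_\lambda$ records the differences among the largest parts of $\lambda$, and a generic transient partition of $n=\binom{3k+3}{2}+(k+1)$ (for instance one whose top parts have wildly varying differences) lies in some orbit without any periodic tail; Pham's convergence result only guarantees the periodic-tail structure for elements at bounded level, i.e.\ in the portion of the digraph that has already stabilized, not for all elements of a fixed finite orbit. So your deletion map $\Phi_P$ is not even defined on all of $\mathcal{O}_{P^{k+1}}$. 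Second, the fiber count of exactly five is precisely the content of the theorem ($c_{BWW}=c_{BBW}=5$), and you only assert it (``a direct check would show''). The paper's $k$-fuse and pre-fuse machinery and the weak-composition formula for $u_k(x)$ cannot be expected to supply that check: those tools compute \emph{level generating functions} of trees in the limiting system $\BSlimit$, not cardinalities of the finite orbits $\mathcal{O}_{P^k}$, and the paper explicitly remarks in its discussion that the relationship between the constants $c_P$ and the denominators/fuse data of $H_P$ is not understood. As it stands, the inductive step reduces the theorem to an unproven local counting claim built on an incorrect structural assumption, so the argument does not go through.
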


    It is also an interesting question to find a combinatorial interpretation of these $c_P$. In addition, the relationship between $c_P$ and the denominator of $H_P$ is not clear. For example, $BWBWBWB$ and $BWBBWWW$ are not the dual of each other, and $c_{BWBWBWB} = 63 \neq c_{BWBBWWW} = 94$, yet our data shows that
    \[ H_{BWBWBWB}(x) = H_{BWBBWWW}(x) \]
    \[ = (1-x)\dfrac{x^9 + 8x^8 + 42x^7 - 19x^6 - 63x^5 - 56x^4 - 34x^3 - 18x^2 - 10x - 7}{18x^7 + 16x^6 + 6x^5 + x^4 - 1}.\]
    The converse appears to be more probable. The smallest and only interesting example that we could compute is $WWWBBWWB$ and $WWWBBWBB$. They are not the dual of each other, but our data shows that
    \[ c_{WWWBBWWB} = c_{WWWBBWBB} = 135, \]
    and indeed $H_{WWWBBWWB}(x)$ and $H_{WWWBBWBB}(x)$ have the same denominator. Thus, we make the following conjecture.

    \begin{conjecture}
        For any two primitive necklaces $P_1$ and $P_2$, if $c_{P_1} = c_{P_2}$ then $H_{P_1}(x)$ and $H_{P_2}(x)$ have the same denominator.
    \end{conjecture}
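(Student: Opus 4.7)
The plan is to connect the growth constant $c_P$ and the denominator of $H_P(x)$ through a single underlying linear-algebraic object, namely the transfer matrix for the system of equations on the tree generating functions $g_1(x),\ldots,g_n(x)$ set up in the style of Sections~\ref{subsec:bww} and~\ref{subsec:WBB}. Writing the system as $g = A(x) + M_P(x)\,g$, where $M_P(x)$ is an $n \times n$ matrix whose entries are polynomials in $x$ built from the normalized coefficients $u_k(x)$ and $v_k(x)$ of Section~\ref{sec:fuse}, the denominator of $H_P(x) = (1-x)\sum_i g_i(x)$ must divide $D_P(x) := \det(I - M_P(x))$. The first step is therefore to define $M_P(x)$ canonically from $P$, using a classification of the playable moves at each recurrent element $C_i$ in terms of the word structure of $P$, and to show that, at least generically, the reduced denominator of $H_P(x)$ coincides with $D_P(x)$.

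The second step is to realize $c_P$ itself in terms of the same matrix. Since $|\mathcal{O}_{P^k}|$ grows geometrically with ratio exactly $c_P$, the natural guess is that $c_P$ is the dominant eigenvalue of a transfer matrix $N_P$ governing the passage $\mathcal{O}_{P^k} \hookrightarrow \mathcal{O}_{P^{k+1}}$ in the finite digraphs. Taking $k \to \infty$ and comparing with $M_P(x)$, I would try to prove that $c_P = 1/r$, where $r$ is the smallest positive real root of $D_P(x)$; this would make $c_P$ an intrinsic invariant of $D_P(x)$ and, combined with Step~1, would realize $c_P$ as the reciprocal of the smallest root of the denominator of $H_P(x)$.

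The hard part, and the reason I regard the conjecture as genuinely open, is bridging the gap between ``$c_{P_1} = c_{P_2}$'' and ``$D_{P_1}(x) = D_{P_2}(x)$'': equality of one root of two polynomials does not in general force the polynomials themselves to agree. To close this gap one would need a very rigid factorization of $D_P(x)$. A natural hope is that $D_P(x)$ decomposes as a universal ``background'' factor $Q_n(x)$ depending only on $n = |P|$, coming from the generic tree structure common to all necklaces of that length, times a single factor of the shape $1 - c_P \cdot \Phi_n(x)$ that carries all of the necklace-specific information. To extract such a factorization I would analyze the block structure of $M_P(x)$ arising from the fuse and pre-fuse decompositions of Sections~\ref{subsec:k-fuse} and~\ref{subsec:k-pre-fuse}, isolating a ``recurrent'' block whose eigenvalue records the growth constant. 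The main obstacle is that, with essentially one nontrivial supporting example, the correct form of the universal factor $Q_n(x)$ is genuinely unclear, and a systematic computation of denominators across many necklaces of the same length would likely be needed to even conjecture the right structure before attempting a proof.
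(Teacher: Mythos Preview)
The statement you are addressing is presented in the paper as a \emph{conjecture}, not a theorem; the paper offers no proof, only the single supporting example $WWWBBWWB$ versus $WWWBBWBB$ and the surrounding data tables. So there is no ``paper's own proof'' to compare against, and your document is, as you yourself note, a research outline rather than a proof.

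That said, your Step~2 contains a concrete error that the paper's own data already refutes. You propose that $c_P$ should equal $1/r$ for $r$ the smallest positive real root of the denominator $D_P(x)$, which would make $c_P$ an intrinsic invariant of $D_P(x)$. But just above the conjecture the paper records that $BWBWBWB$ and $BWBBWWW$ have \emph{identical} $H_P(x)$, hence identical denominators $18x^7 + 16x^6 + 6x^5 + x^4 - 1$, while $c_{BWBWBWB} = 63 \neq 94 = c_{BWBBWWW}$. Thus $c_P$ is \emph{not} determined by $D_P(x)$, and in particular cannot be read off as the reciprocal of any distinguished root. Your proposed mechanism would actually prove the biconditional ``$c_{P_1}=c_{P_2} \iff$ same denominator'', and the reverse implication is already false.

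This also undermines the factorization hope in your Step~3: a decomposition $D_P(x) = Q_n(x)\cdot(1 - c_P\,\Phi_n(x))$ with $Q_n,\Phi_n$ depending only on $n$ would again force equal $D_P$ to imply equal $c_P$. Any correct attack must allow $c_P$ to depend on more than the denominator of $H_P$; the linkage from $c_P$ to $D_P$ must be many-to-one, not functional in either direction that you describe.
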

    
    More data about $c_P$ and $H_P$ can be found in the Appendix below.


\section*{Acknowledgments}
We would like to offer our sincerest thanks to Nhung Pham for sparking our interest in the Bulgarian Solitaire Problem, as well as to Vic Reiner for providing us with his expert guidance throughout this project. We would also like to thank Elise Catania and Connor McCausland for their help with editing and proofreading.

\bibliography{bibliography}
\bibliographystyle{alpha}

\newpage

\section*{Appendix: Data}
\subsection*{Data on the conjectural ratios $c_P$}

    These tables show the conjectural $c_P$ and $|\mathcal{O}_{P^k}|$ for primitive necklaces of size up to 8.

    \begin{table}[h!]
        \centering
        \begin{tabular}{|c|c|c|c|}
            \hline
            $P$ & $c_P$ & $|\mathcal{O}_{P^k}|$ & Verified for \\
            \hline
            $BWWW$ & 15 & $15\cdot 15^{k-1}$ & $k\leq 6$ \\
            \hline
            $BBBW$ & 15 & $30\cdot 15^{k-1}$ & $k\leq 6$ \\
            \hline
            \hline
            $BBWW$ & 10 & $15\cdot 10^{k-1}$ & $k\leq 6$ \\
            \hline
        \end{tabular}
        \caption{$|\mathcal{O}_{P^k}|$ and $c_P$ for primitive necklaces of size $4$}
        \label{tab:c_P_4}
    \end{table}
    
    \begin{table}[h!]
        \centering
        \begin{tabular}{|c|c|c|c|}
            \hline
            $P$ & $c_P$ & $|\mathcal{O}_{P^k}|$ & Verified for \\
            \hline
            $BWWWW$ & 44 & $56\cdot 44^{k-1}$ & $k\leq 4$ \\
            \hline
            $BBBBW$ & 44 & $135\cdot 44^{k-1}$ & $k\leq 4$ \\
            \hline
            \hline
            $BBWWW$ & 27 & $45\cdot 27^{k-1}$ & $k\leq 4$ \\
            \hline
            $BBBWW$ & 27 & $67\cdot 27^{k-1}$ & $k\leq 4$ \\
            \hline
            \hline
            $BWBWB$ & 17 & $34\cdot 17^{k-1}$ & $k\leq 5$ \\
            \hline
            $WBWBW$ & 17 & $32\cdot 17^{k-1}$ & $k\leq 5$ \\
            \hline
        \end{tabular}
        \caption{$|\mathcal{O}_{P^k}|$ and $c_P$ for primitive necklaces of size $5$}
        \label{tab:c_P_5}
    \end{table}
    
    \begin{table}[h!]
        \centering
        \begin{tabular}{|c|c|c|c|}
            \hline
            $P$ & $c_P$ & $|\mathcal{O}_{P^k}|$ & Verified for \\
            \hline
            $BWWWWW$ & 164 & $231\cdot 164^{k-1}$ & $k\leq 3$ \\
            \hline
            $BBBBBW$ & 164 & $627\cdot 164^{k-1}$ & $k\leq 3$ \\
            \hline
            \hline
            $BBWWWW$ & 96 & $185\cdot 96^{k-1}$ & $k\leq 3$ \\
            \hline
            $BBBBWW$ & 96 & $322\cdot 96^{k-1}$ & $k\leq 3$ \\
            \hline
            \hline
            $BBBWWW$ & 80 & $214\cdot 80^{k-1}$ & $k\leq 3$ \\
            \hline
            \hline
            $BWBWWW$ & 53 & $87\cdot 53^{k-1}$ & $k\leq 4$ \\
            \hline
            $BBBWBW$ & 53 & $133\cdot 53^{k-1}$ & $k\leq 4$ \\
            \hline
            \hline
            $WWBWBB$ & 38 & $80\cdot 38^{k-1}$ & $k\leq 4$ \\
            \hline
            \hline
            $BBWBWW$ & 30 & $65\cdot 30^{k-1}$ & $k\leq 4$ \\
            \hline
        \end{tabular}
        \caption{$|\mathcal{O}_{P^k}|$ and $c_P$ for primitive necklaces of size $6$}
        \label{tab:c_P_6}
    \end{table}
    
    \newpage
    
    \begin{table}[h!]
        \centering
        \begin{tabular}{|c|c|c|c|}
            \hline
            $P$ & $c_P$ & $|\mathcal{O}_{P^k}|$ & Verified for \\
            \hline
            $BWWWWWW$ & 578 & $1002\cdot 578^{k-1}$ & $k\leq 2$ \\
            \hline
            $BBBBBBW$ & 578 & $3010\cdot 578^{k-1}$ & $k\leq 2$ \\
            \hline
            \hline
            $BBWWWWW$ & 351 & $811\cdot 351^{k-1}$ & $k\leq 2$ \\
            \hline
            $BBBBBWW$ & 351 & $1637\cdot 351^{k-1}$ & $k\leq 2$ \\
            \hline
            \hline
            $BBBWWWW$ & 290 & $777\cdot 290^{k-1}$ & $k\leq 2$ \\
            \hline
            $BBBBWWW$ & 290 & $1114\cdot 290^{k-1}$ & $k\leq 2$ \\
            \hline
            \hline
            $BWBWWWW$ & 152 & $294\cdot 152^{k-1}$ & $k\leq 3$ \\
            \hline
            $BBBBWBW$ & 152 & $544\cdot 152^{k-1}$ & $k\leq 3$ \\
            \hline
            \hline
            $BWBBWWW$ & 94 & $336\cdot 94^{k-1}$ & $k\leq 3$ \\
            \hline
            $BBBWWBW$ & 94 & $286\cdot 94^{k-1}$ & $k\leq 3$ \\
            \hline
            \hline
            $BBWBWWW$ & 81 & $189\cdot 81^{k-1}$ & $k\leq 3$ \\
            \hline
            $BBBWBWW$ & 81 & $255\cdot 81^{k-1}$ & $k\leq 3$ \\
            \hline
            \hline
            $BWWBWWW$ & 75 & $150\cdot 75^{k-1}$ & $k\leq 3$ \\
            \hline
            $BBBWBBW$ & 75 & $255\cdot 75^{k-1}$ & $k\leq 3$ \\
            \hline
            \hline
            $WBWBWBW$ & 63 & $148\cdot 63^{k-1}$ & $k\leq 3$ \\
            \hline
            $BWBWBWB$ & 63 & $158\cdot 63^{k-1}$ & $k\leq 3$ \\
            \hline
            \hline
            $BBWWBWW$ & 50 & $125\cdot 50^{k-1}$ & $k\leq 4$ \\
            \hline
            $BBWBBWW$ & 50 & $145\cdot 50^{k-1}$ & $k\leq 4$ \\
            \hline
        \end{tabular}
        \caption{$|\mathcal{O}_{P^k}|$ and $c_P$ for primitive necklaces of size $7$}
        \label{tab:c_P_7}
    \end{table}
    
    \newpage
    
    \begin{table}[h!]
        \centering
        \begin{tabular}{|c|c|c|c|}
            \hline
            $P$ & $c_P$ & $|\mathcal{O}_{P^k}|$ & Verified for \\
            \hline
            $BWWWWWWW$ & 2313 & $4565\cdot 2313^{k-1}$ & $k\leq 2$ \\
            \hline
            $BBBBBBBW$ & 2313 & $14883\cdot 2313^{k-1}$ & $k\leq 2$ \\
            \hline
            \hline
            $BBWWWWWW$ & 1426 & $3727\cdot 1426^{k-1}$ & $k\leq 2$ \\
            \hline
            $BBBBBBWW$ & 1426 & $8463\cdot 1426^{k-1}$ & $k\leq 2$ \\
            \hline
            \hline
            $BBBWWWWW$ & 1185 & $3880\cdot 1185^{k-1}$ & $k\leq 2$ \\
            \hline
            $BBBBBWWW$ & 1185 & $5972\cdot 1185^{k-1}$ & $k\leq 2$ \\
            \hline
            \hline
            $BBBBWWWW$ & 956 & $4420\cdot 956^{k-1}$ & $k\leq 2$ \\
            \hline
            \hline
            $BWBWWWWW$ & 562 & $1152\cdot 562^{k-1}$ & $k\leq 2$ \\
            \hline
            $BBBBBWBW$ & 562 & $2414\cdot 562^{k-1}$ & $k\leq 2$ \\
            \hline
            \hline
            $WWWBWBBB$ & 436 & $1076\cdot 436^{k-1}$ & $k\leq 2$ \\
            \hline
            \hline
            $BBWBWWWW$ & 288 & $747\cdot 288^{k-1}$ & $k\leq 2$ \\
            \hline
            $BBBBWBWW$ & 288 & $1158\cdot 288^{k-1}$ & $k\leq 2$ \\
            \hline
            \hline
            $BWBBWWWW$ & 273 & $815\cdot 273^{k-1}$ & $k\leq 2$ \\
            \hline
            $BBBBWWBW$ & 273 & $1082\cdot 273^{k-1}$ & $k\leq 2$ \\
            \hline
            \hline
            $BBBWBWWW$ & 240 & $802\cdot 240^{k-1}$ & $k\leq 2$ \\
            \hline
            \hline
            $BWWBWWWW$ & 220 & $500\cdot 220^{k-1}$ & $k\leq 2$ \\
            \hline
            $BBBBWBBW$ & 220 & $983\cdot 220^{k-1}$ & $k\leq 2$ \\
            \hline
            \hline
            $BWBWBWWW$ & 197 & $420\cdot 197^{k-1}$ & $k\leq 3$ \\
            \hline
            $BBBWBWBW$ & 197 & $593\cdot 197^{k-1}$ & $k\leq 3$ \\
            \hline
            \hline
            $WWWBWWBB$ & 150 & $375\cdot 150^{k-1}$ & $k\leq 3$ \\
            \hline
            $WWBBWBBB$ & 150 & $525\cdot 150^{k-1}$ & $k\leq 3$ \\
            \hline
            \hline
            $WWWBBWBB$ & 135 & $414\cdot 135^{k-1}$ & $k\leq 3$ \\
            \hline
            $WWBWWBBB$ & 135 & $470\cdot 135^{k-1}$ & $k\leq 3$ \\
            \hline
            \hline
            $WWWBBWWB$ & 135 & $360\cdot 135^{k-1}$ & $k\leq 3$ \\
            \hline
            $WBBWWBBB$ & 135 & $524\cdot 135^{k-1}$ & $k\leq 3$ \\
            \hline
            \hline
            $BBWBWWBW$ & 114 & $316\cdot 114^{k-1}$ & $k\leq 3$ \\
            \hline
            \hline
            $BBWBWBWW$ & 110 & $295\cdot 110^{k-1}$ & $k\leq 3$ \\
            \hline
            \hline
            $WWBWBWBB$ & 97 & $309\cdot 97^{k-1}$ & $k\leq 3$ \\
            \hline
            \hline
            $BWBWWBWW$ & 85 & $245\cdot 85^{k-1}$ & $k\leq 3$ \\
            \hline
            $BBWBBWBW$ & 85 & $289\cdot 85^{k-1}$ & $k\leq 3$ \\
            \hline
        \end{tabular}
        \caption{$|\mathcal{O}_{P^k}|$ and $c_P$ for primitive necklaces of size $8$}
        \label{tab:c_P_8}
    \end{table}

    \newpage

\subsection*{Data on the generating functions $H_P(x)= \displaystyle \lim_{\ell\rightarrow\infty} \mathcal{D}_{P^\ell}(x)$}

    Here are $H_P(x), H_{P^*}(x)$ for some primitive necklaces $P$ and their duals $P^*$.

    \centering

    \begin{align*}
        H_{BWW} &= (1-x)\dfrac{x^3 - 3x^2 - 4x -3}{2x^3 + x^2 - 1}\\
        &= H_{BBW}\\
        \cline{1-2}\\
        H_{BWWW} &= (1-x)\dfrac{x^5 + 8x^4 - 3x^3 - 8x^2 - 6x - 4}{6x^4 + 4x^3 + x^2 - 1}\\\\
        H_{BBBW} &= (1-x)\dfrac{2x^5 + 8x^4 - 5x^3 - 10x^2 - 7x - 4}{6x^4 + 4x^3 + x^2 - 1}\\\\
        \cline{1-2}\\
        H_{BBWW} &= (1-x)\dfrac{x^5 + 4x^4 - 3x^3 - 6x^2 - 6x - 4}{3x^4 + 2x^3 + x^2 - 1}\\\\
        \cline{1-2}\\
        H_{BWWWW} &= (1-x)\dfrac{2x^6 + 16x^5 - 12x^4 - 23x^3 - 16x^2 - 8x - 5}{12x^5 + 8x^4 + 2x^3 - 1}\\\\
        H_{BBBBW} &= (1-x)\dfrac{4x^6 + 16x^5 - 16x^4 - 28x^3 - 19x^2 - 9x - 5}{12x^5 + 8x^4 + 2x^3 - 1}\\\\
        \cline{1-2}\\
        H_{BBWWW} &= (1-x)\dfrac{3x^6 + 14x^5 - 10x^4 - 19x^3 - 15x^2 - 8x - 5}{9x^5 + 6x^4 + 2x^3 - 1}\\\\
        H_{BBBWW} &= (1-x)\dfrac{3x^6 + 10x^5 - 15x^4 - 24x^3 - 18x^2 - 9x - 5}{9x^5 + 6x^4 + 2x^3 - 1}\\\\
        \cline{1-2}\\
        H_{BWBWB} &= (1-x)\dfrac{x^6 + 8x^5 - 9x^4 - 16x^3 - 12x^2 - 7x - 5}{6x^5 + 4x^4 + x^3 - 1}\\
        &= H_{WBWBW}\\
        \cline{1-2}\\
        H_{BBBWWW} &= (1-x)\dfrac{6x^8 + 31x^7 + 69x^6 - 16x^5 - 57x^4 - 46x^3 - 24x^2 - 11x - 6}{27x^6 + 20x^5 + 7x^4 + x^3 - 1}\\\\
        \cline{1-2}\\
        H_{BWWWWW} &= (1-x)\dfrac{2x^8 + 13x^7 + 51x^6 - 10x^5 - 49x^4 - 40x^3 - 21x^2 - 10x - 6}{24x^6 + 20x^5 + 7x^4 + x^3 - 1}\\\\
        H_{BBBBBW} &= (1-x)\dfrac{4x^8 + 18x^7 + 52x^6 - 18x^5 - 61x^4 - 49x^3 - 25x^2 - 11x - 6}{24x^6 + 20x^5 + 7x^4 + x^3 - 1}\\\\
        \cline{1-2}\\
        H_{BBWWWW} &= (1-x)\dfrac{3x^8 + 13x^7 + 39x^6 - 17x^5 - 43x^4 - 35x^3 - 20x^2 - 10x - 6}{18x^6 + 13x^5 + 5x^4 + x^3 - 1}\\\\
        H_{BBBBWW} &= (1-x)\dfrac{6x^8 + 20x^7 + 40x^6 - 21x^5 - 51x^4 - 43x^3 - 24x^2 - 11x - 6}{18x^6 + 13x^5 + 5x^4 + x^3 - 1}\\\\
        \cline{1-2}\\
        H_{BWBWWW} &= (1-x)\dfrac{x^8 + 8x^7 + 42x^6 - x^5 - 35x^4 - 32x^3 - 18x^2 - 9x - 6}{18x^6 + 16x^5 + 6x^4 + x^3 - 1}\\\\
        H_{BBBWBW} &= (1-x)\dfrac{2x^8 + 12x^7 + 43x^6 - 7x^5 - 43x^4 - 38x^3 - 21x^2 - 10x - 6}{18x^6 + 16x^5 + 6x^4 + x^3 - 1}\\\\
        \cline{1-2}\\
        H_{BBWBWW} &= (1-x)\dfrac{2x^7 + 12x^6 - 3x^5 - 20x^4 - 23x^3 - 16x^2 - 9x - 6}{6x^6 + 7x^5 + 4x^4 + x^3 - 1}\\\\
        \cline{1-2}\\
        H_{WWBWBB} &= (1-x)\dfrac{x^7 + 8x^6 - 15x^5 - 26x^4 - 21x^3 - 13x^2 - 8x - 6}{6x^6 + 4x^5 + x^4 - 1}\\\\
        \cline{1-2}\\
        H_{BBWBWWW} &= (1-x)\dfrac{6x^8 + 40x^7 - 9x^6 - 67x^5 - 69x^4 - 42x^3 - 21x^2 - 11x - 7}{18x^7 + 21x^6 + 10x^5 + 2x^4 - 1}\\\\
        H_{BBBWBWW} &= (1-x)\dfrac{6x^8 + 32x^7 - 23x^6 - 79x^5 - 76x^4 - 47x^3 - 24x^2 - 12x - 7}{18x^7 + 21x^6 + 10x^5 + 2x^4 - 1}\\\\
        \cline{1-2}\\
        H_{BWBWBWB} &= (1-x)\dfrac{x^9 + 8x^8 + 42x^7 - 19x^6 - 63x^5 - 56x^4 - 34x^3 - 18x^2 - 10x - 7}{18x^7 + 16x^6 + 6x^5 + x^4 - 1}\\
        &= H_{WBWBWBW}\\
        \cline{1-2}\\
        H_{BWBBWWW} &= (1-x)\dfrac{x^9 + 8x^8 + 42x^7 - 19x^6 - 63x^5 - 56x^4 - 34x^3 - 18x^2 - 10x - 7}{18x^7 + 16x^6 + 6x^5 + x^4 - 1}\\\\
        H_{BBBWWBW} &= (1-x)\dfrac{2x^9 + 12x^8 + 43x^7 - 25x^6 - 77x^5 - 70x^4 - 43x^3 - 22x^2 - 11x - 7}{18x^7 + 16x^6 + 6x^5 + x^4 - 1}\\\\
        \cline{1-2}\\
        H_{BBWWBWW} &= (1-x)\dfrac{2x^8 + 12x^7 - 9x^6 - 33x^5 - 38x^4 - 28x^3 - 17x^2 - 10x - 7}{6x^7 + 7x^6 + 4x^5 + x^4 - 1}\\\\
        H_{BBWBBWW} &= (1-x)\dfrac{2x^8 + 12x^7 - 9x^6 - 32x^5 - 36x^4 - 26x^3 - 16x^2 - 10x - 7}{6x^7 + 7x^6 + 4x^5 + x^4 - 1}\\\\
        \cline{1-2}\\
        H_{BWWBWWW} &= (1-x)\dfrac{2x^8 + 23x^7 - 12x^6 - 54x^5 - 53x^4 - 33x^3 - 18x^2 - 10x - 7}{12x^7 + 14x^6 + 6x^5 + x^4 - 1}\\\\
        H_{BBBWBBW} &= (1-x)\dfrac{4x^8 + 24x^7 - 16x^6 - 60x^5 - 57x^4 - 36x^3 - 20x^2 - 11x - 7}{12x^7 + 14x^6 + 6x^5 + x^4 - 1}\\\\
    \end{align*}

\end{document}